\title{Iterated collapsing phenomenon on $G_2$-manifolds}
\author{Yang Li}
\date{\today}
\newtheorem{thm}{Theorem}[section]
\newtheorem{lem}[thm]{Lemma}
\theoremstyle{definition}
\newtheorem{eg}[thm]{Example}
\newtheorem{rmk}[thm]{Remark}
\newtheorem{prop}[thm]{Proposition}
\newtheorem*{Acknowledgement}{Acknowledgement}
\newcommand{\ie}{\emph{i.e.} }
\newcommand{\cf}{\emph{cf.} }
\newcommand{\R}{\mathbb{R}}
\newcommand{\C}{\mathbb{C}}
\newcommand{\Z}{\mathbb{Z}}
\newcommand{\norm}[1]{\left\lVert#1\right\rVert}
\newcommand{\Lap}{\Delta}
\DeclareMathOperator{\Tr}{Tr}
\def\XXint#1#2#3{{\setbox0=\hbox{$#1{#2#3}{\int}$ }
		\vcenter{\hbox{$#2#3$ }}\kern-.6\wd0}}
\begin{document}
	\maketitle

\begin{abstract}
We propose a new collapsing mechanism for $G_2$-metrics, with the generic region admitting a circle bundle structure over a K3 fibration over a Riemann surface. The adiabatic description involves a weighted version of the maximal submanifold equation. In a local smooth setting we prove the existence of formal power series solutions, and the problem of compactification is discussed at a heuristic level.
\end{abstract}

\section{Introduction}

The purpose of this paper is to introduce a formal differential geometric mechanism for the degeneration of $G_2$-metrics, which we hope will evantually lead to the construction of new $G_2$-metrics. In the generic region the manifold is approximately a circle fibration over a 6-dimensional manifold/orbifold which itself admits a K3 fibration over a Riemann surface $S$ (with boundary), where the sizes of the circle and the K3 fibres are shrinking at a fine tuned rate. This collapsing mechanism exhibits a plethora of phenomena:

\begin{itemize}
\item It can be viewed as a degenerate case of circle collapsing over a Calabi-Yau 3-fold studied by Foscolo-Haskins-Nordstr\"om \cite{HaskinsS1}. However, unlike in \cite{HaskinsS1}, by tuning the collapsing rate of the circle and K3 fibres, our setup maintains some nonlinearity of the Apostolov-Salamon equation, namely the $S^1$-reduction of the $G_2$-holonomy condition.

\item
It has close analogy with Donaldson's proposal of adiabatic Kovalev-Lefschetz fibration over a 3-dimensional base \cite{Donaldson}. Unlike in \cite{Donaldson}, the new feature is that K3 collapsing can happen without being the fibres of a coassociative fibration.

\item 
It generalizes in $G_2$ geometry some aspects of the `small complex structure limit' of Calabi-Yau metrics studied by Sun and Zhang \cite{SunZhang}. In particular, the Gibbons-Hawking ansatz and the Ooguri-Vafa type metrics in \cite{SunZhang} play a crucial role here in describing wall crossing behaviour.

\item
It involves fibrations of small ALF gravitational instantons along some 3-dimensional locus which itself admits a fibration over $S^1$ with small Riemann surface fibres. The picture has close analogy with ALE fibrations discussed in the physics literature, and the gluing construction of Joyce and Karigiannis \cite{JoyceKarigiannis}.

\end{itemize}

Our central philosophy, much like in Donaldson's proposal \cite{Donaldson}, is to encode the degenerating $G_2$-structures into certain adiabatic data. Roughly speaking, the topology is encoded into a local system over the Riemann surface $S$ with fibres isomorphic to $H^2(K3)$, and the $G_2$-structure is encoded by a section of this local system satisfying a number of linear algebraic constraints coming from the topology of the $S^1$-fibration. The $G_2$-holonomy condition then requires this section to be the critical point of a weighted area functional, analogous to the maximal submanifold equation proposed in \cite{Donaldson}. This local picture is put on solid foundation, as we demonstrate how to reconstruct a formal power series solution in the collapsing parameter $\epsilon$, starting from the data of a weighted maximal submanifold (\cf section \ref{Formalpowerseriessolution}). This is accomplished by a quite intricate induction scheme, after taking into account several gauge fixing issues.

 Global questions will be discussed at a heuristic level, such as Lefschetz fibration, wall crossing phenomena, boundary behaviour of the Riemann surface, ALF fibrations, etc. The walls indicate certain jumping discontinuities, and the positions of the walls are not known a priori, but should instead be solved along with the PDE, so the proposed global weighted maximal submanifold equation will have the nature of a free boundary problem.

The following problems still need to be resolved to turn our proposal into actual constructions of $G_2$-metrics on compact manifolds:

\begin{itemize}
\item Find the topological data over the Riemann surface $S$, using the global Torelli theorem for K3 surfaces and lattice theory. This problem is similar to the `matching problem' in the Kovalev twisted sum construction \cite{Kovalev}\cite{HaskinsCorti}.

\item
Solve the free boundary problem to obtain the adiabatic data, prove the regularity of solutions, and check certain genericity assumptions.

\item
Perform the gluing construction. This will require substantial work, and our more limited goal here is to discuss some likely geometric ingredients.

\end{itemize}

The ansatz in the $G_2$-setting also has a natural 
$Spin(7)$ analogue: a small circle bundle over a 7-manifold with a closed $G_2$-structure admitting a collapsing K3 fibration. It turns out to be encoded into a 3-dimensional version of the weighted maximal submanifold equation. This provides a unifying viewpoint for circle collapsing of $Spin(7)$ manifolds, and the adiabatic coassociative K3 fibration proposed by Donaldson \cite{Donaldson}. In particular, the $Spin(7)$ version of our ansatz generalizes the Donaldson ansatz.

\begin{Acknowledgement}
The author is a 2020 Clay Research Fellow, based at MIT. This work was partially done at the IAS. The author thanks Simon Donaldson, Song Sun and Mark Haskins for discussions.

\end{Acknowledgement}

\section{Local differential geometry}

\subsection{Circle collapsing and Apostolov-Salamon equations}

We briefly review the construction in \cite{HaskinsS1}.
A $G_2$-structure $(M,\phi)$ with a free $S^1$-symmetry action can be described in terms of an $SU(3)$-structure $(\omega,\Omega, g_{M/S^1})$ on $M/S^1$, together with a function $h: M/S^1\to \R_+$ encoding the length of the Killing vector field, and an $S^1$-connection 1-form $\vartheta$ on the principal circle bundle $M\to M/S^1$. Explicitly 
the $SU(3)$-structure is specified by a  non-degenerate real 2-form $\omega$ and a complex volume form $\Omega$ satisfying
\[
\omega^3= \frac{3}{2} \text{Re}\Omega\wedge \text{Im}\Omega, \quad \omega\wedge \Omega=0.
\]
and the
 $G_2$-structure is 
\[
\phi= \vartheta \wedge \omega+ h^{3/4}\text{Re}\Omega, \quad *_\phi\phi= -h^{1/4} \vartheta \wedge \text{Im}\Omega + \frac{1}{2}h\omega^2, \quad g_\phi= h^{1/2}g_{M/S^1} +h^{-1}\vartheta^2.
\]
The torsion free condition for the $G_2$-structure is equivalent to the \textbf{Apostolov-Salamon equation}
\begin{equation*}
\begin{cases}
d\omega=0, \quad d(h^{3/4}\text{Re}\Omega)=- d\vartheta \wedge \omega, \\
d(h^{1/4}\text{Im}\Omega)=0, \quad \frac{1}{2}dh\wedge \omega^2= h^{1/4} d\vartheta \wedge \text{Im}\Omega.
\end{cases}
\end{equation*}
In particular $[d\vartheta]\wedge [\omega]=0\in H^4(M)$, where $\frac{1}{2\pi}[d\vartheta]$ represents the first Chern class of the principal circle bundle.

While this is in general a highly nonlinear coupled system, the key observation of Foscolo-Haskins-Nordstr\"om \cite{HaskinsS1} is that it formally linearizes in the adiabatic limit where the $S^1$-fibres are much smaller compared to the size of $M/S^1$. They consider a family of $S^1$-invariant torsion free $G_2$-structures, written in a rescaled convention
\begin{equation}\label{ASrescaledconvention}
\phi_\epsilon= \epsilon \vartheta \wedge \omega + h^{3/4} \text{Re}\Omega,  \quad g_{\phi_\epsilon}= h^{1/2}g_{M/S^1} +\epsilon^2 h^{-1}\vartheta^2,
\end{equation}
where the data $(\omega, \Omega, h, \vartheta)$ depend on $\epsilon$. The torsion free condition then reads
\begin{equation}\label{AScollapsing}
\begin{cases}
d\omega=0, \quad \frac{1}{2} dh\wedge \omega^2= \epsilon h^{1/4} d\vartheta \wedge \text{Im}\Omega,
\\
d\text{Re}\Omega = -\frac{3}{4} h^{-1} dh\wedge \text{Re}\Omega - \epsilon h^{-3/4} d\vartheta \wedge \omega,
\quad d\text{Im}\Omega= - \frac{1}{4} h^{-1} dh \wedge \text{Im}\Omega.
\end{cases}
\end{equation}
Combining with the $SU(3)$-structure condition, we see
\[
d\vartheta\wedge \omega^2= -\epsilon^{-1} d (h^{3/4}\text{Re}\Omega \wedge \omega )=0.
\]
Assuming $(\omega, \Omega, h, \vartheta)$ all have smooth limits as $\epsilon\to 0$, we find that $h$ is constant to leading order, which we can normalize to be one. Then to leading order
\[
\omega=\omega_{CY}+ O(\epsilon), \quad \Omega=\Omega_{CY}+O(\epsilon),\quad 
d\omega_{CY}=0, \quad d\Omega_{CY}=0,
\] 
namely the $SU(3)$-structure is approximately \textbf{Calabi-Yau}. Writing \[h=1+ \epsilon \mathfrak{h}+ O(\epsilon^2),
\quad \vartheta= \vartheta_0+O(\epsilon), \]
we obtain the linear limiting  equations 
\begin{equation}
d\vartheta_0 \wedge \omega_{CY}^2=0, \quad \frac{1}{2}d\mathfrak{h} \wedge \omega_{CY}^2= d\vartheta_0\wedge \text{Im}\Omega_{CY}.
\end{equation}
Equivalently $(\mathfrak{h}, \vartheta_0)$ satisfies the \textbf{Calabi-Yau monopole} equation
\[
d\vartheta_0 \wedge \omega_{CY}^2=0, \quad d\mathfrak{h} = *(d\vartheta_0\wedge \text{Re}\Omega_{CY}).
\]
In particular $d^*d \mathfrak{h}=0$.
In the special case $\mathfrak{h}=0$, this reduces to the Hermitian Yang-Mills condition on the $U(1)$-connection $\vartheta_0$. Pluging this back into (\ref{AScollapsing}) has the effect of deforming the $SU(3)$-structure to being only approximately Calabi-Yau. Assuming there are no singular fibres, and under the cohomological condition $[d\vartheta]\wedge [\omega]=0$, then one can solve (\ref{AScollapsing}) iteratively to obtain a convergent expansion in $\epsilon$, and thereby produce infinitely many families of collapsing $G_2$-metrics on noncompact manifolds \cite{HaskinsS1}.

On compact manifolds there are no nontrivial examples of $G_2$-manifolds with $S^1$-symmetry, because any Killing vector field on a compact Ricci flat manifold is parallel. A folklore construction strategy, which is not fully carried out in the literature, is to incorporate distributional effects into the Calabi-Yau monopole equation:
\[
d(d\vartheta_0)= 2\pi\sum_i k_i L_i
\]
where $L_i$ represent the currents defined by disjoint 3-dimensional  submanifolds, and $k_i\in \Z$. Compatibility with the Calabi-Yau monopole condition requires $L_i$ to be \textbf{special Lagrangians}, namely \[
\omega_{CY}|_{L_i}=0, \quad \text{Im}\Omega_{CY}|_{L_i}=0.
\]
The cohomological condition $\sum k_i[L_i]=0$ is known as \textbf{charge conservation}. Solving the Calabi-Yau monopole equation, $d*d\mathfrak{h}$ is equal to the signed measure 
$v\mapsto -2\pi\sum k_i\int_{L_i} v \text{Re}\Omega_{CY}$ supported on the 3-cycles, so $\mathfrak{h}\sim \frac{k_i}{2\text{dist}(L_i, \cdot)}$ near $L_i$. The curvature form $d\vartheta$ integrates to $2\pi k_i$ on the suitably oriented 2-spheres linking $L_i$.

As an important variant, we allow for Calabi-Yau orbifolds with $\Z_2$ quotient singularities locally modelled on the fixed point set of an antiholomorphic involution, so the orbifold singular sets $L_i$ are special Lagrangians. In such cases the modification is that $\mathfrak{h}\sim \frac{2k_i}{2\text{dist}(L_i, \cdot)  }$.

Geometrically, the asymptotes of $(\mathfrak{h}, \vartheta_0)$ near $L_i$ means that the $G_2$-structure $\phi_\epsilon$ transverse to $L_i$ matches approximately with the asymptotes for ALF gravitational instantons, of $A_{k_i-1}$-type for $k_i\geq 0$ in the manifold locus case, and respectively of $D_{k_i+2}$-type for $k_i\geq -2$ in the orbifold locus case. The strategy is then to desingularize the neighbourhood of $L_i$ by gluing in a suitable fibration of ALF instantons over $L_i$. The meaning of some special values of $k_i$ are as follows:
\begin{itemize}
\item The $A_{-1}$-type ALF instanton is the flat product $S^1\times\R^3$. In this case $k_i=0$, so the singularity does not actualy exist.

\item The $A_0$-type ALF instanton is the Taub-NUT metric, which has no deformation once we fix the asymptotic circle length. One expects the ALF fibration to be essentially uniquely determined by the asymptotic matching requirement. Similarly with the $D_{-2}$-type ALF instanton, also known as the Atiyah-Hitchin metric.

\item In other cases, the ALF instantons have nontrivial moduli, so one expects internal degrees of freedoms to arise in the ALF fibration, similar to the harmonic one-forms appearing in the work of Joyce and Karigiannis \cite{JoyceKarigiannis}.

\end{itemize}

Notice charge conservation necessitates the appearance of some $L_i$ with $k_i<0$. The corresponding dihedral type ALF gravitational instantons then break the $S^1$-symmetry of the $G_2$-structure, thus making the compact examples possible.

% The main difficulty in this strategy is to produce the disjoint special Lagrangians inside Calabi-Yau manifolds/orbifolds.

%...............

%implicitly chosen a convention for 3-cycles to define distributions:
%ddtheta wedge alpha=intL alpha

\subsection{Iterated Collapse I: fast circle collapsing}\label{fastcirclecollapsing}

We consider now the degenerate situation where the Calabi-Yau 3-fold $M/S^1$ itself admits a holomorphic K3 fibration over a Riemann surface $S$, whose fibres shrink down as we vary the Calabi-Yau structure. It is natural to expect that if the circle collapsing happens at a much faster rate than the shrinking of the K3 fibres, then the Foscolo-Haskins-Nordstr\"om picture of $S^1$-invariant collapsing $G_2$-metrics should still be valid. We shall make a formal analysis for the range of parameters
\[
\text{diam}(S) \sim 1, \quad \text{diam}(K3)\sim t \ll 1, \quad \text{diam}(S^1)\sim \epsilon \ll t^2,
\]
and over a local region of $S$ where all K3 fibres are smooth.

 The leading order behaviour of the Calabi-Yau metric on $M/S^1$ is 
 \[
 \omega\approx \omega_S+ t^2 \omega_y, \quad \Omega\approx t^2 dy\wedge \Omega_y,
 \]
 where $y$ is a local holomorphic coordinate of $S$, and $\omega_y, \Omega_y$ are a family of K\"ahler metrics and holomorphic 2-forms on the K3 fibres parametrized by $y$. We are also implicitly using the horizontal distribition defined by the orthogonal complements of the tangent space of the K3 fibres, which induces a horizontal-vertical type decomposition on differential forms over $M/S^1$. In the $t\to 0$ limit $\omega_y$ is the Calabi-Yau metric on the K3 fibres in the class $t^{-2}[\omega]\in H^2(K3)$, so $\omega_y, \Omega_y$ define a hyperk\"ahler structure. Without loss of generality $t^{-4}\int_{K3}[\omega]^2=1$.

Assuming that $d\vartheta$ has a smooth limit $d\vartheta_0$ as $t\to 0$, the condition $d\vartheta\wedge \omega^2=0$ becomes in the limit
\[
d\vartheta_0\wedge \omega_y=0 
\] 
on all K3 fibres, and in particular $[d\vartheta]\wedge [\omega]=0\in H^4(K3)$. We then analyze the condition $\frac{1}{2}dh\wedge \omega^2= \epsilon h^{1/4} d\vartheta\wedge \text{Im}\Omega$ by decomposition into horizontal-vertical types:
\begin{itemize}
\item 
The vertical derivative of $h$ along the K3 surfaces is of order $O(\epsilon)$. This allows us to treat $h$ as if it only depends on $y\in Y$. We can write $y=y_1+ \sqrt{-1}y_2$, and $\Omega_y= \omega_2+\sqrt{-1}\omega_1$, so $t^{-2}\text{Im}\Omega= \omega_1dy_1+ \omega_2dy_2$.

\item 
On the K3 surfaces, 
\[
\frac{1}{2} \frac{\partial h}{\partial y_i  } \omega_y^2\approx \epsilon t^{-2} h d\vartheta_0 \wedge \omega_i, \quad i=1,2.
\]
\end{itemize}
We see that the key assumption $h\approx 1$ in \cite{HaskinsS1} is only valid if $\epsilon\ll t^2$. When this holds, then

\[
dh\approx 2\epsilon t^{-2} \sum dy_i \int_{K3} [d\vartheta]\wedge \omega_i ,  
\]
and the restriction of $d\vartheta_0$ to the K3 fibres are the unique harmonic 2-form in the fixed class $[d\vartheta]$, because the self dual part is prescribed above.

We now seek  special Lagrangians in the adiabatic setting. For simplicity consider a 3-manifold $L$ inside $M/S^1$ fibred over a curve $l$ in $S$, whose fibres are 2-spheres $L_y$ representing a $(-2)$-class $\sigma\in H^2(K3)$. We require
\[
\omega_y |_{L_y}=0, \quad \text{Im}\Omega|_L=0,
\]
so along $l$,
\begin{equation}\label{specialLagrangianadiabatic}
[\omega]\cdot \sigma=0, \quad \sigma\cdot \sum_i[\omega_i]dy_i=0,
\end{equation}
meaning $\sigma$ is of type $(1,1)$ for a particular choice of complex structure on the K3 surfaces. Setting $L_y$ as the $(-2)$-curve in the class $\sigma$, we obtain $L=\cup_{y\in l} L_y$.

\subsection{Iterated Collapse II: fine tuned circle collapsing}\label{Slowcollapsing}

We now turned to the fine tuned scaling $\epsilon=t^2$, which is the main setting of this paper (the case of $\epsilon/t^2=\text{const}$ can be reduced to this by redefining $h$). The basic distance scales are
\[
\text{diam}(S) \sim 1, \quad \text{diam}(K3)\sim t \ll 1, \quad \text{diam}(S^1)\sim \epsilon= t^2.
\]
Our goal is to find an ansatz solving the Apostolov-Salamon equation (\ref{AScollapsing}) approximately, and encode it by adiabatic data. This procedure has strong analogy with Donaldson's proposal about coassociative K3 fibrations \cite{Donaldson}. In this section the base $S$ is local.

In analogy with the $\epsilon\ll t^2$ case, we wish to maintain the following features in the adiabatic limit:
\begin{itemize}
\item The 6-fold $M/S^1$ with $SU(3)$-structure $(\omega, \Omega)$ is fibred by K3 surfaces over a surface $S$, and the tangent spaces of the K3 fibres are preserved by the almost complex structure. We write $y_1, y_2$ as the local real coordinates on $S$, and
\[
\begin{cases}
t^{-2}\text{Im}\Omega=\omega_1dy_1+\omega_2dy_2,
\\
\omega= t^2\omega_y+ \omega_S,
\end{cases}
\]
using the horizontal distribution induced by the $SU(3)$-structure. We impose the cohomological normalisation $\int_{K3}[\omega]^2=t^4$, namely $[\omega_y]^2=1$.

\item
The positive valued function $h$ depends only on $y_1, y_2$ to leading order, and the vertical derivative along K3 fibres are negligible.

\end{itemize}

We will derive the ansatz by formal calculations using (\ref{AScollapsing}):
\begin{itemize}
\item 
By $d\text{Im}\Omega= -\frac{1}{4}h^{-1}dh\wedge \text{Im}\Omega$, the 2-forms $\omega_1, \omega_2$ restricted to the K3 fibres are closed, and 
\[
\frac{\partial }{\partial y_2}(h^{1/4}[\omega_1] )= \frac{\partial }{\partial y_1}(h^{1/4}[\omega_2] ).
\]
Over the local base, we find a function $H: S\to H^2(K3)$, with
\begin{equation}
\frac{\partial H}{\partial y_i}=h^{1/4}[\omega_i], \quad i=1,2.
\end{equation}
The $SU(3)$-condition implies $\omega_y \wedge \omega_i=0$ on K3 surfaces, so $\frac{\partial H}{\partial y_i}\wedge [\omega]=0\in H^4(K3)$, and up to choosing an additive constant $H\wedge [\omega]=0$.

\item 
By $d(h^{3/4}\text{Re}\Omega)=-\epsilon d\vartheta \wedge \omega$, writing $t^{-2}\text{Re}\Omega= \Theta_1 dy_1+ \Theta_2 dy_2$, we deduce to leading order on the K3 fibres,
\[
d\Theta_1=0, \quad d\Theta_2=0.
\]
From the SU(3)-structure $\omega_y \wedge \Theta_i=0$, and $\Theta_i$ are closed self dual 2-forms on the K3 fibres. Since $\omega_1, \omega_2$ are also closed self dual 2-forms orthogonal to $\omega_y$, it follows that $\Theta_1, \Theta_2$ are $\R$-linear combinations of $\omega_1, \omega_2$. The linear coefficients can be determined by a cohomological calculation. Denote
\[
g_{ij}= \int_{K3} \omega_i\wedge \omega_j, \quad i,j=1,2, \quad \det g=\det (g_{ij}),
\]
then the $SU(3)$-structure requirement implies 
\begin{equation}\label{Thetaintermsofomega}
t^{-2}\text{Re}\Omega= \sqrt{\det g} \{   dy_1 (g^{12}\omega_1+ g^{22}\omega_2)- dy_2 (g^{11}\omega_1+ g^{21}\omega_2)     \}.
\end{equation}
As a simple check, if $y_1, y_2$ are isothermal coordinates so that $g_{ij}\propto \delta_{ij}$, then
\begin{equation*}
t^{-2}\text{Re}\Omega=   dy_1 \omega_2- dy_2 \omega_1, \quad t^{-2}\Omega= (\omega_2+ \sqrt{-1}\omega_1 )(dy_1+ \sqrt{-1}dy_2)    
\end{equation*}
as expected. In particular, the K3 fibres are endowed with a \textbf{hyperk\"ahler structure} via $(\omega_y,  \omega_2+ \sqrt{-1}\omega_1)$.  Since $\omega^3= \frac{3}{2}\text{Re}\Omega\wedge \text{Im}\Omega$, to leading order 
\[
3t^4 \omega_S \wedge \omega_y^2= \frac{3}{2}  \text{Re}\Omega\wedge \text{Im}\Omega= 3t^4\sqrt{\det g} dy_1\wedge dy_2\wedge \omega_y^2, 
\]
hence $\omega_S\approx \sqrt{\det g} dy_1\wedge dy_2$.

\item
By $\frac{1}{2}dh\wedge \omega^2= \epsilon h^{1/4} d\vartheta \wedge \text{Im}\Omega$, we see on the K3 fibres
\[
\frac{\partial h}{\partial y_i} \omega_y^2= 2h^{1/4} d\vartheta\wedge \omega_i, \quad i=1,2.
\]
Taking the cohomology classes,
\[
\frac{\partial h}{\partial y_i}= 2h^{1/4}\int_{K3}[d\vartheta] \wedge [\omega_i]= 2 \int_{K3} \frac{\partial H}{\partial y_i} \wedge [d\vartheta],
\]
so by a choice of additive constant for $H$,
\begin{equation}\label{hviaH}
h=\begin{cases}
2[d\vartheta] \cdot H, \quad & [d\vartheta]\neq 0,
\\
\text{const}>0, \quad & [d\vartheta]=0.
\end{cases} 
\end{equation}
This additive choice is compatible with $H\cdot [\omega]=0$ because $[d\vartheta]\cdot [\omega]=0$. We require $h>0$.

\item 
By $d\vartheta\wedge \omega^2=0$, to leading order $d\vartheta\wedge \omega_y=0$ on the K3 fibres. We see that the self dual part of $d\vartheta$ on the K3 fibres are completely prescribed, so $d\vartheta$ is the unique \textbf{harmonic 2-form} in its class with respect to the hyperk\"ahler structure on the K3 fibre.

\item
The function $H$ maps to the polarized subspace $[\omega]^\perp\subset H^2(K3)$, which is of signature $(2, 19)$. Now the metric induced on $S$ by the immersion $H$ is
\begin{equation}
\tilde{g}= \frac{\partial H}{\partial y_i} \cdot \frac{\partial H}{\partial y_j} dy_idy_j=   h^{1/2} g_{ij}dy_idy_j,
\end{equation}
which is positive definite. Thus we call $H$ a \textbf{polarized positive section}, in close analogy with Donaldson's proposal \cite{Donaldson}.

\item
Since in this local discussion we assume there is no singular fibre, cohomologically we require there is no $(-2)$-class $\sigma$ with
\[
\frac{\partial H}{\partial y_i}\wedge \sigma=0, \quad i=1,2, \quad [\omega_y]\wedge \sigma=0
\]
at any point on our local base. We say the polarized positive section $H$ \textbf{avoids excess $(-2)$-classes}.

\item
We take a closer look at $d(h^{3/4}\text{Re}\Omega)=-\epsilon d\vartheta \wedge \omega$. Our previous requirements imply $\epsilon^{-1}d(h^{3/4}\text{Re}\Omega)$ has horizontal-vertical type $(2, 2)$, so it defines an $H^2(K3)$-valued 2-form on $S$,
\[
\{ \frac{\partial}{\partial y_2}( h^{3/4}[\Theta_1])- \frac{\partial}{\partial y_1}( h^{3/4}[\Theta_2]) \} dy_2\wedge dy_1= \frac{\partial}{\partial y_j}(h^{1/2} g^{ij}\sqrt{\det g} \frac{\partial H}{\partial y_i}  )dy_2\wedge dy_1.
\]
Now $d\vartheta\wedge \omega$ is to leading order $d\theta\wedge \omega_S$, so defines the $H^2(K3)$-valued 2-form
\[
[d\vartheta] \omega_S= [d\vartheta] \sqrt{\det g} dy_1\wedge dy_2.
\]
Comparing the two expressions,
\begin{equation}\label{weightedmaximalsubmanifold}
\frac{\partial}{\partial y_j}(h^{1/2} g^{ij}\sqrt{\det g} \frac{\partial H}{\partial y_i}  )= [d\vartheta] \sqrt{\det g} .
\end{equation}
This is a PDE on the polarized positive section $H$, and we call it the \textbf{weighted maximal submanifold equation} in view of its close analogy with the maximal submanifold equation in \cite{Donaldson}. In fact, when $[d\vartheta]=0$, then $h$ is a positive constant, and this is the 2-dimensional version of the usual maximal submanifold equation, namely the image of $H$ has mean curvature zero.
\end{itemize}

We now discuss the \textbf{reconstruction problem}: given a local polarized positive section $H: S\to H^2(K3)$ avoiding excess $(-2)$-classes, so that
\[
H\cdot [\omega_0]=0, \quad h=2H\cdot [d\vartheta]>0, \quad  \tilde{g}= \frac{\partial H}{\partial y_i} \cdot \frac{\partial H}{\partial y_j} dy_idy_j \text{ Riemannian},
\]
for some $[\omega_0]\in H^2(K3)$ with $[\omega_0]^2>0$, and given an integral class $\frac{1}{2\pi}[d\vartheta]\in H^2(K3)$ with $[\omega_0]\cdot [d\vartheta]=0$, how can we reverse engineer the data $(\omega, \Omega, h, \vartheta)$ in the adiabatic limit? (If $[d\vartheta]=0$, then we should modify to $h=\text{const}>0$ instead, and $H$ is just a positive section constrained to be orthogonal to $[\omega_0]$.) We will describe such a procedure that is nearly canonical.

\begin{itemize}
\item  Since we are working over a local base $S$, topologically the K3 fibration is just the trivial fibration. The $S^1$-bundle is topologically determined by its Chern class $\frac{1}{2\pi}[d\vartheta]$.

\item 

Suppose we are given the polarized positive section $H: S\to H^2(K3)$. As a first step, we use the global Torelli theorem on K3 surfaces to construct the hyperk\"ahler structure on the K3 surfaces, such that
\[
[\omega_y]= [\omega_0], \quad [\omega_1]= h^{-1/4} \frac{\partial H}{\partial y_1}, \quad [\omega_2]= h^{-1/4} \frac{\partial H}{\partial y_2}.
\]
This makes sense because the 3-subspace spanned by $[\omega_y], [\omega_1], [\omega_2]$ is a positive 3-subspace inside $H^2(K3)$, by the polarized positive section condition. The resulting hyperk\"ahler structures vary smoothly with the K3 fibres. 
We have $[\omega_i]\cdot [\omega_j]= g_{ij}= h^{-1/2}\tilde{g}_{ij}$. This allows us to construct $\Theta_1, \Theta_2$ as linear combinations of $\omega_1, \omega_2$ via (\ref{Thetaintermsofomega}).

\item 
Now to regard $\omega_y, \omega_1, \omega_2$ as forms on the 6-fold $M/S^1$ requires us to specify a \textbf{horizontal distribution}. The difference between two horizontal distributions is given by a 1-form on $S$ with values in the space of vector fields on the fibre K3 surfaces, which modifies $\omega_y, \omega_i$ (modulo terms which involve $dy_1\wedge dy_2$) by
\[
\omega_y\to \omega_y+ \iota_{v_1} \omega_y dy_1+ \iota_{v_2} \omega_y dy_2, \quad \omega_i\to \omega_i+ \iota_{v_1} \omega_i dy_1+ \iota_{v_2} \omega_i dy_2,\quad  i=1,2.
\]
We start with an arbitrary horizontal distribution, to get a preliminary choice of $\omega_y, \text{Im}\Omega, \text{Re}\Omega$. Using the above modification, we can make $d\omega_y=0$ modulo terms involving $dy_1\wedge dy_2$. This reduces the ambiguity of the horizontal distribution to 
\[
\iota_{v_i}\omega_y= da_i, \quad i=1,2,
\]
for real valued functions $a_1, a_2$ on the K3 surfaces. We claim we can spend up this freedom to make $d(h^{1/4}\text{Im}\Omega)=0$.

Notice first that  by construction $d(h^{1/4}\text{Im}\Omega)$ is a 2-form on $S$ valued in the space of exact 2-forms on the K3 fibres; there is no horizontal-vertical type $(1, 3)$-piece because $\omega_1,\omega_2$ are closed on the K3, and the 2-form on the K3 fibres are exact instead of closed because 
\[
\frac{\partial }{\partial y_2} (h^{1/4}[\omega_1])= \frac{\partial }{\partial y_1}(h^{1/4}[\omega_2]).
\]
Morever, since $\omega_y\wedge \text{Im}\Omega=0$, we get
\[
\omega_y\wedge d(h^{1/4}\text{Im}\Omega)= d( \omega_y\wedge h^{1/4}\text{Im}\Omega)=0,
\] 
so the exact 2-forms are fibrewise orthogonal to $\omega_y$. It then suffices to show that by adjusting the horizontal distribution, we can impose the extra conditions
\[
d(h^{1/4}\text{Im}\Omega)\wedge \omega_i= 0, \quad i=1,2.
\]
This would force $d(h^{1/4}\text{Im}\Omega)=0$ because an exact anti-self-dual 2-form is zero.

To see this new claim, notice under the modification of the horizontal distribution with $\iota_{v_i}\omega_y=da_i$, 
\[
\text{Im}\Omega\to \text{Im}\Omega+ (\iota_{v_1}\omega_2- \iota_{v_2}\omega_1) dy_1 \wedge dy_2. 
\]
We need to choose $a_1, a_2$ so that the exact 4-forms on the K3 fibres appearing in $d(h^{1/4}\text{Im}\Omega)\wedge \omega_i$ are cancelled by
\[
d( \iota_{v_1}\omega_2- \iota_{v_2}\omega_1  ) \wedge \omega_i, \quad i=1,2.
\]
This problem is easiest to analyze in isothermal coordinates using the hyperk\"ahler structure on the K3 surfaces, where up to constants
\[
d( \iota_{v_1}\omega_2- \iota_{v_2}\omega_1  ) \wedge \omega_i \propto d*da_i,
\]
which can indeed realize any pair of exact 4-forms by a suitable choice of $a_1, a_2$. We have precisely fixed all the ambiguity of the horizontal distribution.

\item
We can write $t^{-2}d(h^{3/4}\text{Re}\Omega)= dy_2\wedge dy_1\wedge \Theta_3$ since the horizontal-vertical type (1,3) component vanishes. The 2-form $\Theta_3$ is closed on each K3 fibre, and lies in the cohomology class $\frac{\partial}{\partial y_j}(h^{1/2} g^{ij}\sqrt{\det g} \frac{\partial H}{\partial y_i}  )$. We claim that on each fibre it is the unique \textbf{harmonic 2-form} in this class.

We start with the fact that on a 6-manifold, if $\text{Re}\Omega$ and $\text{Im}\Omega$ are related as the real and imaginary parts of a complex 3-form, then for any vector field $w$,
\[
\iota_w d\text{Im}\Omega \wedge \text{Re}\Omega= \iota_w \text{Im}\Omega \wedge d\text{Re}\Omega.
\]
Applying this to $h^{1/4}\text{Re}\Omega$ and $h^{1/4}\text{Im}\Omega$, and taking $w$ as the horizontal lifts of $\frac{\partial}{\partial y_i}$, we see from $d( h^{1/4}\text{Im}\Omega )=0$ that
$
d(h^{1/4}\text{Re}\Omega) \wedge \omega_i=0.
$
Thus 
\[
d( h^{3/4} \text{Re}\Omega  )\wedge \omega_i= \frac{1}{2}h^{-1/4}dh \wedge (\text{Re}\Omega\wedge \omega_i)= \frac{t^2}{2}h^{-1/4}\frac{\partial h}{\partial y_i} \sqrt{\det g}dy_2\wedge dy_1\wedge \omega_y^2,
\]
or equivalently $\Theta_3\wedge \omega_i= \frac{1}{2}h^{-1/4}\frac{\partial h}{\partial y_i} \sqrt{\det g} \omega_y^2$ on each K3 fibre.

Another relation comes from $\text{Re}\Omega\wedge \omega_y=0$, and gives upon differentiation that $\Theta_3\wedge \omega_y=0$ on each K3 fibre. These relations completely prescribe the self dual part of $\Theta_3$, implying it is the harmonic form.

\item
Notice $d\omega_y$ is a 2-form on $S$ valued in the space of closed 1-forms on the K3 fibres. Since $H^1(K3)=0$, the 1-forms are exact, so up to adjusting $\omega_y$ by a function times $dy_1\wedge dy_2$, we can make $\omega_y$ closed on $M/S^1$. Unlike all previous constructions, this step is not fully canonically determined by the topological data and the polarized positive section $H$. The ambiguity in $\omega_y$ is a 2-form pulled back from $S$.

\item
Let $\omega_S= \sqrt{\det g} dy_1\wedge dy_2$, and set $\omega=t^2\omega_y+ \omega_S$. Then $d\omega=0$ on $M/S^1$, and $(\omega, \Omega)$ satisfy all other requirements of the $SU(3)$-structure precisely, except for the normalisation condition which is only approximate:
\[
\omega^3= \frac{3}{2} (1+O(\epsilon)) \text{Re}\Omega\wedge \text{Im}\Omega.
\]
Notice the construction of $\Omega$ is completely canonical, and $\omega$ is canonical up to $\epsilon$ times a 2-form pulled back from $S$; we use this freedom to make the $O(\epsilon)$ term have zero fibrewise integral.

\item
The restrictions of $d\vartheta$ to the K3 fibres are prescribed to be the harmonic 2-forms in the class $[d\vartheta]$. By construction, when restricted to the K3 fibres
\[
d\vartheta\wedge \omega_y=0, \quad 2h^{1/4}d\vartheta\wedge \omega_i= \frac{\partial h}{\partial y_i}\omega_y^2.
\]
This determines $\vartheta$ up to $f_1dy_1+f_2dy_2$ for some arbitrary smooth functions $f_1, f_2$, and gauge equivalence.

\item

Notice the quantity
\[
\frac{1}{2} dh\wedge \omega^2-\epsilon h^{1/4} d\vartheta \wedge \text{Im}\Omega=  \frac{\epsilon^2}{2} dh\wedge \omega_y^2-\epsilon^2 h^{1/4} d\vartheta \wedge (\omega_1dy_1+ \omega_2dy_2)  
\]
can be written as $\epsilon^2 dy_1\wedge dy_2\wedge \Theta_4$ where $\Theta_4$ defines fibrewise 3-forms on K3 surfaces, because by construction the horizontal-vertical (1,4)-type component vanishes. Since 
\[
d\omega=0, \quad d(h^{1/4} \text{Im}\Omega)=0,
\]
we see $d\Theta_4=0$ on the K3 fibres, and by $H^3(K3)=0$ the $\Theta_4$ is exact. Any such $\Theta_4$ on a hyperk\"ahler K3 can be written as
\[
\Theta_4= df_1\wedge \omega_1+ df_2\wedge \omega_2+ df_3\wedge \omega_y.
\]
Spending up the freedom in $d\vartheta$, we can cancel out the $f_1, f_2$ terms, and demand $\Theta_4= df_3\wedge \omega_y$ for some function $f_3$.

Thus
\[
\frac{1}{2} dh\wedge \omega^2-\epsilon h^{1/4} d\vartheta \wedge \text{Im}\Omega= \epsilon^2 dy_1\wedge dy_2\wedge \omega_y\wedge df_3,
\]
which is suppressed by the factor $\epsilon^2$. With respect to the metric defined by the $G_2$-structure $\phi$, this quantity has small magnitude of order $O(t)$. This measures the error of the coclosed condition of the $G_2$-structure.

\item
Now suppose the polarized positive section $H$ satisfies the weighted maximal submanifold equation (\ref{weightedmaximalsubmanifold}). By construction,
\[
d(h^{3/4}\text{Re}\Omega)=\epsilon \Theta_3 \wedge dy_2\wedge dy_1= \epsilon d\vartheta \wedge \sqrt{\det g} dy_2\wedge dy_1= -\epsilon d\vartheta \wedge \omega_S.
\]
Then the quantity 
\[
d(h^{3/4}\text{Re}\Omega)+ \epsilon d\vartheta\wedge \omega= \epsilon^2 d\vartheta\wedge \omega_y
\]
is of type $(1,3)+(2,2)$, and its magnitude with respect to the metric defined by the $G_2$-structure $\phi$ is small of order $O(t)$. This measures the error of the closed condition of the $G_2$-structure. We emphasize that only in this last step do we bring in the weighted maximal submanifold equation.

%We claim we can spend the $f_1dy_1+ f_2dy_2$ ambiguity to make $d\vartheta\wedge \omega=0$ modulo terms involving $dy_1\wedge dy_2$.

%To start with $d\vartheta\wedge \omega_y$ has vanishing horizontal-vertical type (0,4) component, and its type (1,3) component $(d\vartheta)^{(1,1)}\wedge \omega_y$ is closed (modulo $dy_1\wedge dy_2$-terms). Thus on the K3 fibres, 
%\[
%d(\iota_{\frac{\partial}{\partial y_i} } (d\vartheta)^{(1,1)} \wedge \omega_y )=0, \quad i=1,2.
%\]
%By standard Hodge theory on K\"ahler manifolds, using the properties of the Lefschetz operators, the closed 3-form 
%$\iota_{\frac{\partial}{\partial y_i} } (d\vartheta)^{(1,1)} \wedge \omega_y$ 

\end{itemize}

In conclusion, we produced an $S^1$-invariant ansatz $G_2$-structure $\phi$ from the data of a polarized positive section satisfying the weighted maximal submanifold equation, which is approximately closed and coclosed.

\begin{rmk}
The adiabatic special Lagrangian condition (\ref{specialLagrangianadiabatic}) reads in this $\epsilon=t^2$ context
\[
[\omega]\cdot \sigma=0, \quad \sigma\cdot H=\text{const}.
\]
It makes sense even though $M/S^1$ loses the interpretation as a Calabi-Yau 3-fold.
\end{rmk}

\begin{rmk}
The geometrical significance of our ansatz is that it is a plausible approximation for collapsing $G_2$-manifolds in the generic region. 
When we deform the cohomology class of the $G_2$-structure, so that the circle collapsing happens at an even slower rate $\epsilon\gg t^2$ compared to the K3 collapsing, then plausibly the $S^1$-symmetry reduction hypothesis will be invalidated, and higher Fourier modes in the $S^1$-direction start to be significant. 
\end{rmk}

\subsection{Variational formulation}\label{Variationalweightedmaximal}

We now explain how to see the weighted maximal submanifold equation as an Euler Lagrange equation. Given a polarized positive section $H: S\to H^2(K3)$, we define the \textbf{weighted area functional} as
\[
\mathcal{A}_w(H)=  \int_S h^{1/2} \sqrt{\det \tilde{g} }dy_1\wedge dy_2, \quad h= 2[d\vartheta]\cdot H, \quad \tilde{g}_{ij}= \frac{\partial H}{\partial y_i}\cdot \frac{\partial H}{\partial y_j}.
\]

We calculate the first variation. Given a variation $\delta H=f$, then
\[
\delta h= 2[d\vartheta]\cdot f, \quad \delta \tilde{g}_{ij}= \frac{\partial H}{\partial y_i}\cdot \frac{\partial f}{\partial y_j}+  \frac{\partial H}{\partial y_j}\cdot \frac{\partial f}{\partial y_i},
\]
\[
\delta \sqrt{\det (\tilde{g})}= \frac{1}{2}\sqrt{\det (\tilde{g})} \tilde{g}^{ij} \delta \tilde{g}_{ij}= \sqrt{\det (\tilde{g})} \tilde{g}^{ij} \frac{\partial H}{\partial y_i}\cdot \frac{\partial f}{\partial y_j},
\]
Hence the \textbf{first variation formula}
\begin{equation}
\begin{split}
\delta \mathcal{A}_w= &\int h^{1/2} \delta \sqrt{\det (\tilde{g})} dy_1dy_2+ \int \frac{1}{2} h^{-1/2}\delta h \sqrt{\det (\tilde{g})} dy_1dy_2
\\
=& \int h^{1/2}\sqrt{\det g} {g}^{ij} \frac{\partial H}{\partial y_i}\cdot \frac{\partial f}{\partial y_j}  dy_1dy_2+ 
\int  [d\vartheta]\cdot f \sqrt{\det g} dy_1dy_2
\\
=&  \int \{ [d\vartheta]\sqrt{\det g}- \frac{\partial }{\partial y_j}  (h^{1/2}\sqrt{\det g} {g}^{ij} \frac{\partial H}{\partial y_i})\}\cdot f dy_1dy_2.
\end{split}
\end{equation}
The critical points exactly reproduce the weighted maximal submanifold equation.

\begin{rmk}\label{areafunctional}
The terminology `weighted area' is of course based on the area functional
\[
\mathcal{A}(H)=  \int_S  \sqrt{\det \tilde{g} }dy_1\wedge dy_2, \quad  \tilde{g}_{ij}= \frac{\partial H}{\partial y_i}\cdot \frac{\partial H}{\partial y_j},
\]
whose critical points are called maximal submanifolds, because these are local maxima of the area functional due to the signature of the setup \cite{Donaldson}\cite{Limaximal}. Such objects (over a 3-manifold) are central to Donaldson's proposal to adiabatic coassociative K3 fibrations \cite{Donaldson}. In fact, when $[d\vartheta]=0$, then in the discussion above we should use $h=\text{const}>0$, and then $\mathcal{A}_w$ is just up to a multiplicative constant the area functional.
\end{rmk}

\begin{rmk}
In the ansatz of section \ref{Slowcollapsing}, the volume form is approximately
\[
dvol_\phi\sim \frac{1}{2} \epsilon^3 h^{1/2}\sqrt{\det \tilde{g} } \vartheta \wedge dy_1 \wedge dy_2 \wedge  \omega_y^2,
\]
so the volume is approximately
$
\text{Vol}\sim  \pi\epsilon^3 \mathcal{A}_w(H).
$
The variational formulation can be then expected from Hitchin's characterisation of the torsion free $G_2$-structures in terms of the critical points of the volume functional.
\end{rmk}

We now look at the second variation $\mathcal{L}$ of the weighted area functional at some critical point $H$, namely
\[
\mathcal{A}_w(H+s f)= \mathcal{A}_w(H) + \frac{s^2}{2}\int_S f\mathcal{L}f+O(s^3),\quad s\to 0. 
\] 
Here $\mathcal{L}f$ takes value in $H^2(K3)$-valued 2-forms.
This is naturally expected to be important in the deformation theory of our collapsing $G_2$-manifolds. Since $\mathcal{A}_w$ is diffeomorphism invariant on $S$, we know that only the normal component of $f$ is relevant for the second variation, and $\mathcal{L}f$ is in particular normal.

\begin{lem}\label{secondvariationnegative}
Let $S$ be a 2-manifold with boundary. Then $\int_S f\mathcal{L}f$ is negative definite as a symmetric form on normal vector fields $f$ vanishing on the boundary.	
\end{lem}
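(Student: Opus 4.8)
The plan is to compute $\mathcal{L}$ explicitly by taking the second variation of $\mathcal{A}_w$, separate it into a principal (second-order) term and lower-order terms, and then integrate by parts to exhibit definiteness. First I would write $H_s = H + s f$ with $f$ vanishing on $\partial S$, and expand $\mathcal{A}_w(H_s)$ to second order in $s$. From the first variation formula in the excerpt, $\delta \mathcal{A}_w = \int \{[d\vartheta]\sqrt{\det g} - \partial_{y_j}(h^{1/2}\sqrt{\det g}\, g^{ij}\partial_{y_i}H)\}\cdot f$, so the second variation is obtained by differentiating the bracket once more in the direction $f$. The term $[d\vartheta]\sqrt{\det g}$ contributes through $\delta\sqrt{\det g}$ only (a lower-order term in $f$, no derivatives of $f$ of order two), while the divergence term contributes the principal part: varying the coefficient matrix $A^{ij} := h^{1/2}\sqrt{\det g}\, g^{ij}$ once and keeping $\partial_{y_i}\partial_{y_j}$ on $f$ gives, after one integration by parts (legitimate since $f|_{\partial S}=0$), the quadratic form
\[
\int_S f\,\mathcal{L}f = -\int_S A^{ij}\, \frac{\partial f}{\partial y_i}\cdot\frac{\partial f}{\partial y_j}\, dy_1 dy_2 + (\text{lower order in derivatives of } f).
\]
Here the key structural point is that $A^{ij}$ is a positive-definite symmetric matrix (it equals $h^{1/2}$ times the inverse metric $g^{ij}$, with $h>0$), while the pairing $\frac{\partial f}{\partial y_i}\cdot\frac{\partial f}{\partial y_j}$ uses the intersection form on $H^2(K3)$ restricted to the normal bundle of the immersion $H$.

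The crux is then the sign of the normal pairing. Since $H$ is a polarized positive section, its tangent spaces span a maximal positive subspace of $[\omega_0]^\perp$, which has signature $(2,19)$; hence the normal bundle of $H$ in $[\omega_0]^\perp$ is negative definite. Because only the normal component of $f$ matters (diffeomorphism invariance of $\mathcal{A}_w$ on $S$, noted just before the lemma), I may assume $f$ is everywhere normal, and then $\frac{\partial f}{\partial y_i}\cdot\frac{\partial f}{\partial y_j}$ — more precisely its normal part — lands in the negative cone, making $-A^{ij}\frac{\partial f}{\partial y_i}\cdot\frac{\partial f}{\partial y_j}$ a negative-definite quadratic form in $\nabla f$. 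This is exactly the signature mechanism that makes maximal submanifolds local \emph{maxima} of area in \cite{Donaldson}\cite{Limaximal}, and the weight $h^{1/2}$ does not disturb it. The remaining task is to control the lower-order (non-derivative) terms in $f$: these come from $\delta A^{ij}$ paired with $\partial_{y_i}H\cdot\partial_{y_j}f$ and from $\delta(\sqrt{\det g})$ against $[d\vartheta]\cdot f$. I would argue that after integrating by parts so that at most first derivatives of $f$ appear, and after projecting $f$ to its normal component $f^\perp$, the tangential derivatives of $f^\perp$ (i.e. the second fundamental form terms) can be absorbed: the standard computation for the maximal submanifold second variation shows these lower-order terms reorganize into $-|\nabla^\perp f^\perp|^2$ plus a curvature/second-fundamental-form term which, again by the signature, has a favorable sign, or is dominated. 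The upshot is $\int_S f\mathcal{L}f \le -c\int_S |\nabla^\perp f^\perp|^2 < 0$ for $f\not\equiv 0$ vanishing on $\partial S$ (using a Poincar\'e inequality on $S$ with boundary to conclude strict negativity, and to handle the zeroth-order term from the $h^{1/2}$ weight).

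The main obstacle I anticipate is bookkeeping the lower-order terms coming from the weight $h^{1/2}$ and from $\delta(\sqrt{\det g})$, and verifying that they do not spoil definiteness — in the unweighted case of \cite{Donaldson}\cite{Limaximal} the negative definiteness is essentially automatic from the signature, but here $h = 2[d\vartheta]\cdot H$ depends linearly on $H$, so $\delta^2 h = 0$ but $\delta h \cdot \delta(\text{other factors})$ produces genuine extra cross terms. The clean way to handle this is probably to note that such terms are zeroth order in $f$ (no derivatives) or first order, and to invoke the boundary condition $f|_{\partial S}=0$ together with a Poincar\'e/Hardy-type inequality to absorb them into the strictly negative principal term $-\int_S A^{ij}\nabla_i f^\perp\cdot\nabla_j f^\perp$; one should also check that the tangential part of $f$ really drops out, which follows from diffeomorphism invariance but deserves an explicit line. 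I do not expect any difficulty with the principal term itself — that is pure linear algebra of the intersection form.
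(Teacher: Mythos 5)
Your overall strategy (second variation, exploit the signature of $H^2(K3)$ on the normal bundle) is the right instinct, but the execution has a genuine gap at the central step. First, the ``principal term'' is not what you claim: integrating $-\int f\cdot\partial_{y_j}(A^{ij}\partial_{y_i}f)$ by parts gives $+\int_S A^{ij}\,\partial_{y_i}f\cdot\partial_{y_j}f$, and for a normal field $f=N$ this splits as $\Tr(II_N^2)+\sum_i\langle(\nabla_{e_i}N)^\perp,(\nabla_{e_i}N)^\perp\rangle$: the \emph{tangential} part of $\nabla N$ (the shape operator) pairs \emph{positively} because the tangent planes are spacelike, and only the normal part lands in the negative cone. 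So this term is indefinite, not negative definite --- your claim that the signature alone makes it negative confuses the normal part of $\nabla f$ with all of $\nabla f$. The terms you propose to ``absorb'' are exactly what rescues the sign, and they do so through identities, not estimates: the variation $\delta A^{ij}$ paired with $\partial_{y_i}H\cdot\partial_{y_j}f$ contributes $-2\Tr(II_N^2)$, flipping the bad $+\Tr(II_N^2)$ into $-\Tr(II_N^2)$; and the cross term $2\,\delta(h^{1/2})\,\delta(\sqrt{\det\tilde g})$, combined with $\delta^2(h^{1/2})$ and the $\langle\mathfrak m,N\rangle^2$ piece of $h^{1/2}\delta^2(\sqrt{\det\tilde g})$ \emph{via the critical point equation} $-h^{1/2}\langle\mathfrak m,N\rangle+h^{-1/2}\langle N,[d\vartheta]\rangle=0$ (which you never invoke), collapses to $-2\langle\mathfrak m,N\rangle^2$. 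A Poincar\'e/Hardy absorption cannot substitute for this: those cross terms have no a priori sign, they are zeroth order in $\nabla f$, and the lemma is asserted for an arbitrary 2-manifold $S$ with boundary, so there is no smallness of the domain to make an absorption argument close.

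The paper's proof is precisely the standard second variation of area for spacelike surfaces in the pseudo-Euclidean space $H^2(K3,\R)$, weighted by $h^{1/2}$ and combined with the Euler--Lagrange equation, yielding the exact identity
\[
\int_S N\mathcal L N=\int_S\Bigl(-\Tr(II_N^2)+\sum_i\langle(\nabla_{e_i}N)^\perp,(\nabla_{e_i}N)^\perp\rangle-2\langle\mathfrak m,N\rangle^2\Bigr)h^{1/2}\sqrt{\det\tilde g}\,dy_1\wedge dy_2,
\]
in which all three terms are manifestly nonpositive (the first because it is minus a sum of squares of real numbers, the second by the negative definiteness of the ambient metric on the normal bundle, the third trivially). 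Note also that your strictness argument is too weak: concluding only $\nabla^\perp f^\perp=0$ does not force $f=0$, since $f$ could still have nonzero tangential derivative. One needs the vanishing of \emph{both} $II_N$ and $(\nabla_{e_i}N)^\perp$, hence of the full covariant derivative $\nabla N$, so that $N$ is parallel and is then killed by the Dirichlet condition on $\partial S$. I recommend redoing the computation as an exact identity rather than an inequality with error terms.
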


\begin{proof}
Consider the images of a family of maps $H+sN$ for $N$ some normal vector field to the image of $H$, inducing the metrics $\tilde{g}(s)$ from the immersion into $H^2(K3)$. Standard calculation from submanifold theory gives 
\[
\frac{\partial}{\partial s}|_{s=0} \sqrt{ \det \tilde{g} } dy_1\wedge dy_2= - \langle \mathfrak{m}, N\rangle \sqrt{ \det \tilde{g} } dy_1\wedge dy_2 
\]
where $\mathfrak{m}=(\sum_i\nabla_{e_i} e_i)^{\perp}$ is the mean curvature, and $e_i$ stands for an orthonormal frame, and $\perp$ means the normal projection. The second derivative is
\[
\begin{split}
&\frac{\partial^2}{\partial s^2}|_{s=0} \sqrt{ \det \tilde{g}(s) } dy_1\wedge dy_2
\\
&= (- \Tr (II_N^2) + \sum_i \langle (\nabla_{e_i} N)^\perp, (\nabla_{e_i} N)^\perp \rangle +\langle \mathfrak{m}, N\rangle^2 )\sqrt{ \det \tilde{g} } dy_1\wedge dy_2
\end{split}
\]
where $II_T(e_i, e_j)= \langle N, \nabla_{e_i} e_j\rangle$ is the $N$-component of the second fundamental form. Morever,
\[
\frac{\partial }{\partial s}|_{s=0} h(s)^{1/2}=  h^{-1/2} \langle N, [d\vartheta] \rangle,
\]
\[
\frac{\partial^2 }{\partial s^2}|_{s=0} h(s)^{1/2}= -  h^{-3/2} \langle N, [d\vartheta] \rangle^2.
\]
Since the first variation of $h^{1/2} \sqrt{\det \tilde{g}}$ is zero at $H$, we have
\[
- h^{1/2}\langle \mathfrak{m}, N\rangle + h^{-1/2} \langle N, [d\vartheta]\rangle=0.
\]
Thus by the Leibniz rule
\[
\begin{split}
& \frac{\partial^2 }{\partial s^2}|_{s=0}
(h(s)^{1/2}\sqrt{ \det \tilde{g}(s) } dy_1\wedge dy_2 )
\\
=&
h^{1/2} (- \Tr (II_N^2) + \sum_i \langle (\nabla_{e_i} N)^\perp, (\nabla_{e_i} N)^\perp \rangle -2\langle \mathfrak{m}, N\rangle^2 )\sqrt{ \det \tilde{g} } dy_1\wedge dy_2.
\end{split}
\]
The \textbf{second variation formula} is then
\begin{equation}
\int_S N\mathcal{L}N= \int_S (- \Tr (II_N^2) + \sum_i \langle (\nabla_{e_i} N)^\perp, (\nabla_{e_i} N)^\perp \rangle -2\langle \mathfrak{m}, N\rangle^2 ) h^{1/2}\sqrt{ \det \tilde{g} } dy_1\wedge dy_2.
\end{equation}
Crucially, in the normal direction, the ambient metric is negative definite, so all the 3 terms are negative. The equality requires $II_N=0$ and $(\nabla_{e_i} N)^\perp=0$, so $\nabla_{e_i}N=0$, and $N$ is parallel along $S$. The zero boundary condition then forces $N$ to be zero.
\end{proof}

In particular, the boundary value problem of the weighted maximal submanifold equation will have unobstructed deformation under the change of boundary data.

\subsection{Instrinsic geometry of the weighted maximal submanifold}

As $\epsilon\to 0$, the family of (approximate) $G_2$-metrics collapse down to the base $S$ in the Gromov-Hausdorff sense, whose natural structure is a \textbf{metric measure space}.
Since $g_\phi\approx h^{1/2} g_{M/S^1}+\epsilon^2 h^{-1}\vartheta^2$,
the limiting metric is
\[
\tilde{g}= h^{1/2}g= \frac{\partial H}{\partial y_i} \cdot \frac{\partial H}{\partial y_j} dy_i\otimes dy_j,
\]
namely the induced metric from the immersion $H: S\to H^2(K3)$. The natural measure is the one corresponding to the weighted area functional, namely $h^{1/2} \sqrt{\det \tilde{g}}dy_1\wedge dy_2$. Notice this is different from the metric area form by a factor $h^{1/2}$. This is natural from the perspective of low energy effective action: when $\epsilon\to 0$, the low energy modes of
\[
\frac{1}{2}\int |du|_{g_\phi}^2 dvol_{g_\phi}
\]
correspond to $u$ being the pullback of a function on $S$, in which case the action is approximately
\[
\frac{\pi \epsilon^3}{2} \int_S |du|_{\tilde{g}}^2 h^{1/2}  \sqrt{\det \tilde{g}}dy_1\wedge dy_2,
\]
proportional to
\[
\frac{1}{2} \int_S |du|_{\tilde{g}}^2 h^{1/2}  \sqrt{\det \tilde{g}}dy_1\wedge dy_2.
\]

For a general Riemannian metric $g$ with a measure form $e^{-f}dvol_g$, there is a notion called the \textbf{Bakry-\'Emery Ricci curvature} \cite{BakryEmery}:
\[
Ric_f= Ric_g+ Hess(f).
\]
We briefly explain the motivations. First, the $f$-Laplacian is defined as
\[
\Lap_f= \Lap- \nabla f\cdot \nabla,
\]
which satisfies the integration by part formula
\[
\int \langle \nabla u, \nabla v\rangle e^{-f}dvol_g= \int u \Lap_f v e^{-f} dvol_g.
\]
The Bakry-\'Emery Ricci curvature naturally appears in the Bochner formula
\[
\frac{1}{2}\Lap_f |\nabla u|^2= |Hess(u)|^2+ \langle \nabla u, \nabla \Lap_f u\rangle + Ric_f(\nabla u, \nabla u).
\]

\begin{lem}
On the metric measure space $S$ with $(\tilde{g}, h^{1/2}\sqrt{\det \tilde{g}} dy_1dy_2)$, the Bakry-\'Emery Ricci curvature is positive semi-definite.	
\end{lem}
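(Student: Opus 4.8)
The plan is a direct computation of the Bakry--\'Emery tensor $Ric_f = Ric_{\tilde g} + Hess_{\tilde g}(f)$ at the weighted maximal submanifold $H$. Since the measure is $h^{1/2}\sqrt{\det\tilde g}\,dy_1dy_2 = e^{-f}dvol_{\tilde g}$ we have $f = -\tfrac12\log h$, and since $\dim S = 2$ we have $Ric_{\tilde g} = K_{\tilde g}\,\tilde g$ with $K_{\tilde g}$ the Gauss curvature, so the statement reduces to $K_{\tilde g}\,\tilde g - \tfrac12 Hess_{\tilde g}(\log h) \geq 0$ as a symmetric $2$-tensor. The structural fact used throughout is that $H$ realizes $(S,\tilde g)$ as a spacelike immersed surface in the flat pseudo-Euclidean space $H^2(K3)$, lying inside $[\omega_0]^\perp$ of signature $(2,19)$; the polarized positive section hypothesis says the tangent plane $H_*(T_yS)$ is a positive $2$-plane, hence its orthogonal complement inside $[\omega_0]^\perp$ --- the normal bundle $\mathcal N$, which carries the second fundamental form $II$ and the mean curvature vector $\mathfrak m$ (the trace $II_{11}+II_{22}$ in a $\tilde g$-orthonormal frame) --- is negative definite. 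Write $\langle\cdot,\cdot\rangle$ for the ambient form and $|v|^2 := -\langle v,v\rangle \geq 0$ for $v\in\mathcal N$.

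First I would record two identities. From the Gauss equation for a surface in flat ambient space, $K_{\tilde g} = \langle II_{11},II_{22}\rangle - \langle II_{12},II_{12}\rangle$; decomposing $II_{ij} = II^0_{ij} + \tfrac12\tilde g_{ij}\mathfrak m$ into trace-free part and mean curvature and using the sign of $\langle\cdot,\cdot\rangle$ on $\mathcal N$, this simplifies to $K_{\tilde g} = \tfrac12|II^0|^2 - \tfrac14|\mathfrak m|^2$. Secondly, since $h = 2[d\vartheta]\cdot H$ is an affine function of $H$, differentiating twice and using the Gauss formula $\partial_i\partial_j H = \Gamma^k_{ij}\partial_k H + II_{ij}$ gives $Hess_{\tilde g}(h)_{ij} = 2\langle [d\vartheta], II_{ij}\rangle$; evaluating at $H$ and invoking the first-variation identity established in the proof of Lemma~\ref{secondvariationnegative}, namely $[d\vartheta]^\perp = h\,\mathfrak m$ where $\perp$ denotes $\mathcal N$-projection, this becomes $Hess_{\tilde g}(h)_{ij} = 2h\langle\mathfrak m, II_{ij}\rangle$.

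Then I would assemble, using $Hess(\log h) = h^{-1}Hess(h) - h^{-2}dh\otimes dh$. In a $\tilde g$-orthonormal frame I expect to land on
\[
(Ric_f)_{ij} \;=\; \Big(\tfrac12|II^0|^2 + \tfrac14|\mathfrak m|^2\Big)\delta_{ij} \;-\; \langle\mathfrak m, II^0_{ij}\rangle \;+\; \tfrac{1}{2h^2}(e_ih)(e_jh),
\]
the improvement from $-\tfrac14|\mathfrak m|^2$ (in $K_{\tilde g}$) to $+\tfrac14|\mathfrak m|^2$ coming from the trace part of $-\langle\mathfrak m, II_{ij}\rangle = -\langle\mathfrak m, II^0_{ij}\rangle + \tfrac12|\mathfrak m|^2\delta_{ij}$. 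The last term is a nonnegative rank-one tensor. The middle term $-\langle\mathfrak m, II^0_{\bullet\bullet}\rangle$ is symmetric and trace-free (because $II^0$ is), so its eigenvalues are $\pm\mu$ with $2\mu^2 = \sum_{ij}\langle\mathfrak m, II^0_{ij}\rangle^2$, and Cauchy--Schwarz on the negative-definite $\mathcal N$ gives $\mu \leq \tfrac{1}{\sqrt2}|\mathfrak m|\,|II^0|$. Finally AM--GM gives $\tfrac12|II^0|^2 + \tfrac14|\mathfrak m|^2 \geq \tfrac{1}{\sqrt2}|\mathfrak m|\,|II^0| \geq \mu$, so the first two terms already dominate the middle one and $Ric_f \geq 0$. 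In the degenerate case $[d\vartheta]=0$ one takes $h$ constant, so $\mathfrak m=0$ and $Ric_f = K_{\tilde g}\,\tilde g = \tfrac12|II^0|^2\,\tilde g \geq 0$, which is the familiar positivity of Gauss curvature for maximal spacelike surfaces.

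The computation is conceptually routine; the main obstacle is bookkeeping --- keeping the signs straight in the pseudo-Riemannian Gauss equation and in the definition of $\mathfrak m$, and confirming that $II$ and $\mathfrak m$ genuinely take values in the negative-definite $\mathcal N$ (in particular that $\partial_i\partial_j H$ has no component along $[\omega_0]$, which holds since $H(S)\subset[\omega_0]^\perp$), so that no positive-signature direction contaminates $K_{\tilde g}$, $Hess(h)$, or the two Cauchy--Schwarz / AM--GM steps.
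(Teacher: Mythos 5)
Your computation is correct and follows the same skeleton as the paper's proof: the Gauss equation for $Ric_{\tilde g}$, the identity $Hess(h)_{ij}=2\langle[d\vartheta],II_{ij}\rangle$ combined with $[d\vartheta]^\perp=h\,\mathfrak m$ from the first variation, and the negative definiteness of the normal bundle inside $[\omega_0]^\perp$. The one genuine divergence is the final positivity argument. The paper does not split $II$ into trace-free and trace parts: it keeps $Hess(\log h)_{ij}=2\langle II_{ij},\mathfrak m\rangle-4h^{-2}\langle[d\vartheta],e_i\rangle\langle[d\vartheta],e_j\rangle$ and observes that the $\langle II_{ij},\mathfrak m\rangle$ contribution from the Hessian cancels \emph{exactly} against the mean-curvature term in the Gauss equation, leaving $Ric_{BE}(V,V)=-\sum_k\langle II(e_k,V),II(e_k,V)\rangle+2h^{-2}\langle[d\vartheta],V\rangle^2$, in which each summand is manifestly nonnegative with no inequality needed. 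Your trace-free decomposition produces the same tensor --- one checks $-\sum_k\langle II_{ik},II_{jk}\rangle=\bigl(\tfrac12|II^0|^2+\tfrac14|\mathfrak m|^2\bigr)\delta_{ij}-\langle\mathfrak m,II^0_{ij}\rangle$, matching your display --- but you then have to pay for the cross term with Cauchy--Schwarz and AM--GM. Those estimates are correct as stated (the constants work out), so your proof is complete, but the grouping hides the exact cancellation that is the real content of the lemma and makes the equality case less transparent; the paper's arrangement is the one to prefer.
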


\begin{proof}
The usual Ricci curvature of $\tilde{g}$ can be expressed by the second fundamental form via the Gauss equation:
\[
Ric_{\tilde{g}}(e_i, e_j)=  \langle II(e_i, e_j), \mathfrak{m}\rangle- \sum_k\langle II(e_k, e_j), II(e_i, e_k)\rangle ,
\]
where $\mathfrak{m}=\sum (\nabla_{e_k }e_k)^{\perp}$, and $II(e_i,e_j)=(\nabla_{e_i}e_j)^\perp$. From the weighted maximal submanifold equation, by the proof of Lemma \ref{secondvariationnegative},
\[
\mathfrak{m}= h^{-1} [d\vartheta]^\perp.
\]
Next we compute the Hessian of $\log h$.
We have
\[
\nabla_{e_i} h= 2\langle [d\vartheta], e_i\rangle,
\]
so $\nabla h= 2([d\vartheta]-[d\vartheta]^\perp)$. The Hessian matrix is
\[
\langle\nabla_{e_i}\nabla h, e_j\rangle= 2\langle II(e_i,e_j), [d\vartheta]^\perp\rangle.
\]
Then $\nabla\log h= h^{-1} \nabla h$, and
\[
\begin{split}
&\langle\nabla_{e_i}\nabla \log h, e_j\rangle= -h^{-2}\nabla_{e_i}h\cdot \nabla_{e_j}h +h^{-1} \langle \nabla_{e_i}\nabla h, e_j\rangle
\\
=& -4h^{-2} \langle [d\vartheta],e_i\rangle \langle [d\vartheta],e_j\rangle +2h^{-1}\langle II(e_i,e_j), [d\vartheta]^\perp\rangle
\\
=& -4h^{-2} \langle [d\vartheta],e_i\rangle \langle [d\vartheta],e_j\rangle +2\langle II(e_i,e_j), \mathfrak{m}\rangle.
\end{split}
\]
The Bakry-\'Emery Ricci curvature is $Ric_{BE}=Ric_{\tilde{g}}- \frac{1}{2}\nabla^2 \log h$. Combining the above, the mean curvature term cancels, and
\[
Ric_{BE} (e_i, e_j)=  - \sum_k\langle II(e_k, e_j), II(e_i, e_k)\rangle + 2h^{-2} \langle [d\vartheta],e_i\rangle \langle [d\vartheta],e_j\rangle,
\]
or equivalently for any tangent vector $V$,
\begin{equation}
Ric_{BE} (V, V)=  -\sum_k \langle II(e_k, V), II(e_k, V)\rangle + 2h^{-2} \langle [d\vartheta],V\rangle ^2 .
\end{equation}
Crucially the ambient metric in the normal direction is negative definite, so both terms are nonnegative, as required.
\end{proof}

\begin{rmk}
When $[d\vartheta]=0$, so $h=\text{const}>0$, the above reduces to the non-negativity of the usual Ricci curvature on maximal submanifolds. This has been observed in \cite{Donaldson}.
\end{rmk}

\subsection{Dimensional reduction to Calabi-Yau case}\label{DimreductiontoCY}

There is a different way the Apostolov-Salamon framework relates to Calabi-Yau 3-folds: we can start with a (non-compact) Calabi-Yau 3-fold $X$ with $S^1$-symmetry, and take the product with $\R$. Notice here the Calabi-Yau does not appear as $M/S^1$, and such examples are nontrivial in the sense that the $S^1$-bundle is not flat.

We first briefly review the basic setup of Calabi-Yau metrics with holomorphic and Hamiltonian $S^1$-actions \cite[chapter 2]{SunZhang}. We have   the $S^1$-connection $\vartheta$ and the moment coordinate $\mu$, which parametrizes a family of K\"ahler quotients $D=X//S^1$, carrying a family of induced K\"ahler forms $\tilde{\omega}(\mu)$ and a fixed holomorphic volume form $\Omega_D$ on $D$. In our later case of interest $X$ will be a Calabi-Yau 3-fold, and $D$ will be a K3 surface. We can write the K\"ahler structure on $X$ as
\[
\begin{cases}
\omega'= \vartheta\wedge d\mu+\tilde{\omega},
\\
\Omega'= (hd\mu- \sqrt{-1}\vartheta)\wedge \Omega_D.
\end{cases}
\]
Here $h^{-1}$ is the norm squared of the Killing vector field. (In our convention $\vartheta$ corresponds to $-\Theta$ in \cite{SunZhang}.) The K\"ahler condition in this formalism leads to
\[
d\vartheta= -\partial_\mu \tilde{\omega}- d_D^c h \wedge d\mu,
\]
where $d_D$ is the exterior derivative along $D$, and $d_D^cf=Jd_Df$ on $D$. This requires an integrability condition
\[
\partial^2_\mu \tilde{\omega}+ d_Dd_D^c h=0 \quad \text{on $D$}.
\]
In fact, when the $S^1$-action is allowed to have fixed points, then one should also incorporate distributional terms supported on the discriminant locus, to be discussed later.
 Finally, the Calabi-Yau condition
reads
\[
\frac{\tilde{\omega}^{n-1} }{(n-1)!}= \frac{\sqrt{-1}^{(n-1)^2}}{2^{n-1}} h\Omega_D\wedge \overline{\Omega}_D.
\]
In the 3-fold case this specializes to
\[
\tilde{\omega}^2= \frac{1}{2} h\Omega_D\wedge \overline{\Omega}_D.
\]
After eliminating variables, we arrive at a key equation in \cite{SunZhang}
\[
\partial^2_\mu \tilde{\omega}+ d_Dd_D^c \frac{ 2 \tilde{\omega}^2}{ \Omega_D\wedge \overline{\Omega}_D  }=0 \quad \text{on $D$}.
\]

\begin{eg}\label{gravitaionalinstantons}
The prototype example is the case of complex dimension two, where this discussion specializes to the standard Gibbons-Hawking ansatz:
\[
\begin{cases}
\omega'= \vartheta\wedge d\mu+ h \frac{\sqrt{-1}}{2} d\zeta\wedge d\bar{\zeta},
\\
\Omega'= (hd\mu-\sqrt{-1}\vartheta)\wedge d\zeta,
\\
d\vartheta= - \partial_\mu h   \frac{\sqrt{-1}}{2} d\zeta\wedge d\bar{\zeta}- d_D^c h\wedge d\mu,
\end{cases}
\]
where $\zeta$ is an $S^1$-invariant holomorphic coordinate, and $h$ satisfies the Laplace equation except on some isolated points:
\[
(\partial_\mu^2+ 4\partial_\zeta \partial_{\bar{\zeta}} )h=0.
\]
For instance,
\[
h= A+ \frac{1}{2r}, \quad r^2= |\mu|^2+|\zeta|^2, \quad A=\text{const}>0
\]
corresponds to the Taub-NUT metric. The formula
\[
h= A+ \frac{k+1}{2r}, \quad r^2= |\mu|^2+|\zeta|^2, \quad A=\text{const}>0, \quad k=-1,0,1,\ldots
\]
describes the asymptote at infinity of $A_k$ type ALF gravitational instantons.

A small variant of the construction is to take the $\Z_2$-quotient under $(\mu,\zeta)\to (-\mu, -\zeta)$. The Gibbons-Hawking ansatz with
\[
h= A+ \frac{2m-4}{2r}, \quad r^2= |\mu|^2+|\zeta|^2, \quad A=\text{const}>0, \quad m=0,1,2,\ldots
\]
describes the asymptote at infinity of type $D_m$ ALF gravitational instantons.
The special case $m=0$ is known as the Atiyah-Hitchin metric, and $m=1$ is known as the Dancer metrics. The crucial difference with the $A_k$ case is that the $S^1$-symmetry is not global but only asymptotic in the $D_m$ case. A good survey for the Gibbons-Hawking ansatz and gravitational instantons is \cite[section 3]{Foscolo}.

\end{eg}

\begin{eg}\label{Calabiansatz}
Now move to complex dimension 3.
Let $\omega_{D,CY}$ be a Calabi-Yau metric on $D$ with $\omega_{D,CY}^2= \frac{1}{2}\Omega_D\wedge \overline{\Omega}_D$.
The special solution
\[
\begin{cases}
\tilde{\omega}=c \mu \omega_{D,CY},
\\
h=c^2\mu^2,
\\
d\vartheta= -c\omega_{D,CY}
\end{cases}
\]
corresponds to the Calabi ansatz. See section \ref{TianYau} for another perspective.

\end{eg}

The explicit conversion from the Calabi-Yau case to the Apostolov-Salamon setting is as follows. The data of $h, \vartheta$ are of course the same in both settings. We add the flat 7-th dimension parametrized by the coordinate $\tau$. Let the $SU(3)$-structure be
\[
\begin{cases}
\omega= d\mu \wedge d\tau+ \text{Im}\Omega_D,
\\
h^{1/4}\Omega= (h^{-1/2}d\tau-\sqrt{-1}d\mu)(\tilde{\omega}+\sqrt{-1}h^{1/2}\text{Re}\Omega_D),
\end{cases}
\]
so the $G_2$-structure is
\[
\begin{cases}
\phi= \omega' d\tau+ \text{Re}\Omega',\\
*_\phi\phi= -d\tau\wedge \text{Im}\Omega'+ \frac{1}{2}\omega'^2,
\end{cases}
\]
satisfying the Apostolov-Salamon equation before the rescaling.

To make a more direct comparison with the collapsing setup we also record the rescaled version: write the Calabi-Yau structure as
\[
\begin{cases}
\omega'= \epsilon \vartheta\wedge d\mu+\epsilon\tilde{\omega},
\\
\Omega'=\epsilon (hd\mu- \sqrt{-1}\epsilon\vartheta)\wedge \Omega_D.
\end{cases}
\]
The family of K\"ahler metrics satisfy
\[
\begin{cases}
\partial^2_\mu \tilde{\omega}+ \epsilon^{-1} d_Dd_D^c h=0 \quad \text{on $D$},
\\
d\vartheta= -\partial_\mu \tilde{\omega}- \epsilon^{-1}d_D^c h \wedge d\mu,
\\
\tilde{\omega}^2= \frac{1}{2} h\Omega_D\wedge \overline{\Omega}_D.
\end{cases}
\]
To convert this to the $SU(3)$-structure on $M/S^1$, write
\begin{equation}\label{conversionformularescaled}
\begin{cases}
\omega= d\mu \wedge d\tau+ \epsilon\text{Im}\Omega_D,
\\
h^{1/4}\Omega= \epsilon (h^{-1/2}d\tau-\sqrt{-1}d\mu)(\tilde{\omega}+\sqrt{-1}h^{1/2}\text{Re}\Omega_D),
\end{cases}
\end{equation}
so upon substituting
\[
\begin{cases}
\phi= \epsilon \vartheta \wedge \omega+ h^{3/4} \text{Re}\Omega,
\\
*_\phi \phi= -\epsilon h^{1/4} \vartheta \wedge \text{Im}\Omega + \frac{1}{2}h\omega^2,
\end{cases}
\]
compatibly with the rescaled convention (\ref{ASrescaledconvention}), then
\[
\begin{cases}
\phi= \omega' d\tau+ \text{Re}\Omega',\\
*_\phi\phi= -d\tau\wedge \text{Im}\Omega'+ \frac{1}{2}\omega'^2.
\end{cases}
\]
Thus the Calabi-Yau condition will lead to the rescaled Apostolov-Salamon equation (\ref{AScollapsing}). In particular, the complex 3-dimensional case of the degenerate Calabi-Yau metrics studied in \cite{SunZhang} can be viewed as a dimensionally reduced example of our proposal.

It is instructive to reexamine the polarized positive section $H$ in this dimensionally reduced setting. We have
\[
\frac{\partial H}{\partial \tau}=[\text{Re}\Omega_D]=const, \quad \frac{\partial H}{\partial  \mu}=-[\tilde{\omega}], \quad [\omega_y]=[\text{Im}\Omega_D].
\]
Combined with $[d\vartheta]=-\partial_\mu [\tilde{\omega}]$, we see $H$ can only be affine linear in $\tau$ and quadratic in $\mu$. A special instance is the Calabi ansatz in example \ref{Calabiansatz}, where
\[
H= [\text{Re}\Omega_D]\tau + \frac{1}{2} [d\vartheta]\mu^2.
\]
The fact that such examples are so restrictive suggests that the landscape of 7-dimensional examples is not well captured by the dimensional reduction. Rather the merit of the Calabi-Yau case is to guide our speculations about how to compactify the collapsing $G_2$-manifolds.

\section{Formal power series solution}\label{Formalpowerseriessolution}

The goal of this section is to show that over a given local base $S$ without singular fibres, the approximate solution introduced in section \ref{Slowcollapsing} can be perturbed into a formal power series solution to the Apostolov-Salamon equation (\ref{AScollapsing}):
\[
\begin{cases}
& h_\epsilon= h+ \epsilon h^{(1)}+ \epsilon^2 h^{(2)}+\ldots,
\\
& \vartheta_\epsilon= \vartheta+ \epsilon \theta^{(2)}+ \epsilon^2 \theta^{(3)}+\ldots,
\\
& \omega_\epsilon= \omega_S+ \epsilon \omega_y+ \epsilon^2 \omega^{(1)}+ \epsilon^3 \omega^{(2)}+\ldots,
\\
& h_\epsilon^{1/4}\text{Im}\Omega_\epsilon= \epsilon h^{1/4} (\omega_1dy_1+\omega_2dy_2) +\epsilon^2 \rho^{(1)}+ \epsilon^3 \rho^{(2)}+\ldots,
\\
& H_\epsilon= H+ \epsilon H^{(1)}+ \epsilon^2 H^{(2)}+ \ldots.
\end{cases}
\]

\begin{rmk}
Here $H, h, \vartheta, \omega_S, \omega_y, \omega_1,\omega_2$ together with the horizontal distribution are constructed in section \ref{Slowcollapsing}. The superscripts suggest the number of iterations in an inductive construction. The $\theta^{(k)}$ terms are 1-forms instead of connections. The term $h^{1/4}\text{Im}\Omega$ is viewed as an independent variable, in favour of $\text{Im}\Omega$, because we wish to keep this term $d$-closed. The power series solution is only formal because the inductive steps involve higher derivatives in the horizontal directions, coupled to higher powers of $\epsilon$. However, this whole discussion presumes all K3 fibres are smooth, so all functions involved are $C^\infty$.	
\end{rmk}

The key feature of the Apostolov-Salamon system is that it contains the Calabi-Yau monopole system which (almost) decouples from the rest of the equations (\cf section \ref{CalabiYaumonopolesystem}). Then given the solution to the Calabi-Yau monopole system, one tries to solve for $\omega_\epsilon, h_\epsilon^{1/4}\text{Im}\Omega_\epsilon$ as an overdetermined system, where the existence of solution crucially relies on the Calabi-Yau monopole system as an integrability condition.

%Can we remove the $H^1(S)$ condition?

%Fix $h$ global constant by cohomological conditions?

\subsection{Gauge fixing issues}\label{gaugefixingissues}

While in the formal limit the K3 surfaces are `complex submanifolds' in the sense that $\Omega|_{K3}=0$ (equivalently $\text{Im}\Omega|_{K3}=0$), this can not be expected to hold in the finite $\epsilon$ setting. To explain this heuristically, given that $d(h_\epsilon^{1/4}\text{Im}\Omega_\epsilon)=0$, we wish to deform the K3 fibres such that $h_\epsilon^{1/4}\text{Im}\Omega_\epsilon|_{K3}=0$. The closedness condition enables us to write fibrewise
\[
h_\epsilon^{1/4} \text{Im}\Omega_\epsilon|_{K3} = df_1\wedge \omega_1+ df_2\wedge \omega_2+ df_3\wedge \omega_y,
\] 
and deformation of the fibre along a vector field $v$ gives a first order correction
\[
\mathcal{L}_v (h_\epsilon^{1/4} \text{Im}\Omega_\epsilon  )= d(\iota_v (h_\epsilon^{1/4} \text{Im}\Omega_\epsilon  ) )\approx d( a \omega_1+ b\omega_2 )
\]
for suitable functions $a, b$ corresponding to the components of $v$. This can be used to cancel the $df_1, df_2$ terms in $h_\epsilon^{1/4} \text{Im}\Omega_\epsilon|_{K3}$, but insufficient to remove the $df_3$ term. From this discussion, we see it is reasonable to impose 
\begin{equation}\label{pseudocalibration}
h_\epsilon^{1/4} \text{Im}\Omega_\epsilon|_{K3}= df\wedge \omega_y
\end{equation}
for some function $f$, as a compatibility condition between the K3 fibration structure and the Apostolov-Salamon equation, which is a  conceptual substitute for the calibration condition. Its chief effect is to rigidify the fibration.

The $S^1$-bundle causes a gauge ambiguity for $\vartheta_\epsilon$, but this can be largely ignored because only the curvature $d\vartheta_\epsilon$ enters into the Apostolov-Salamon equation, and gauge equivalent choices give isomorphic constructions.

Next, we seek an optimal way to represent the Apostolov-Salamon solution with respect to the diffeomorphism action preserving the fibration structure. Given the K3 fibration structure, and the condition $d\omega_\epsilon=0$, with fixed class $\epsilon^{-1}[\omega_\epsilon]$, the standard Moser's trick allows us to assume that  $\epsilon^{-1}\omega_\epsilon|_{K3}$ is fixed to be $\omega_y$. This reduces the fibrewise diffeomorphism group to the fibrewise symplectic group; to remove this, we can impose further that fibrewise
\begin{equation}
(\omega_1dy_1+\omega_2dy_2)\wedge h_\epsilon^{1/4} \text{Im}\Omega_\epsilon= \text{const}(y,\epsilon) \omega_y^2\wedge \omega_S.
\end{equation}

We have thus removed the fibrewise diffeomorphism group. The remaining gauge freedom is the fibration preserving diffeomorphisms moving the fibres around, alternatively thought as the diffeomorphisms of $S$, which are symmetries of the weighted maximal submanifold equation. We first associate a polarized positive section $H_\epsilon$ to a solution of the Apostolov-Salamon system. Notice that modulo $d(f\omega_y)$, the closed form $h_\epsilon^{1/4}\text{Im}\Omega_\epsilon$ vanishes on K3 fibres, so defines a closed $H^2(K3)$-valued 1-form on $S$. If we impose $H^1(S)=0$, or if we demand all $\rho^{(k)}$ to be exact (as we will later do), then this is exact, so can be written as $dH_\epsilon$ for some $H_\epsilon: S\to H^2(K3)$. Now if $f$ is chosen appropriately, then $H_\epsilon$ can be made orthogonal to $[\omega_y]$ pointwise on $S$. Morever, since $H_\epsilon$ is well approximated by $H$, it is also a positive section. Our gauge fixing condition is that $H_\epsilon$ differs from $H$ by a normal vector field to the image of $H$.
This makes $H_\epsilon$ canonically defined.

\begin{rmk}
The gauge fixing is convenient for our induction scheme, but the condition (\ref{pseudocalibration}) seems less fundamental compared to the coassociative K3 fibration analgoue. 
The iterated fibration picture relies on the $S^1$-symmetry reduction assumption, which cannot persist on nontrivial compact examples. Instead, it is expected to be a good effective description up to exponentially suppressed error, in the generic region of compact examples near the adiabatic limit. 

\end{rmk}

\subsection{Iterative scheme setup}

We write 
\[
\begin{cases}
& h^{[k]}= h+ \epsilon h^{(1)}+ \epsilon^2 h^{(2)}+\ldots+ \epsilon^k h^{(k)} ,
\\
& \vartheta^{[k]}= \vartheta+ \epsilon \theta^{(2)}+ \epsilon^2 \theta^{(3)}+\ldots+ \epsilon^{k-1}\theta^{(k)},
\\
& \omega^{[k]}= \omega_S+ \epsilon \omega_y+ \epsilon^2 \omega^{(1)}+ \epsilon^3 \omega^{(2)}+\ldots+ \epsilon^{(k+1)}\omega^{(k)},
\\
& (h^{1/4}\text{Im}\Omega)^{[k]}= \epsilon h^{1/4} (\omega_1dy_1+\omega_2dy_2) +\epsilon^2 \rho^{(1)}+ \epsilon^3 \rho^{(2)}+\ldots+ \epsilon^{(k+1)}\rho^{(k)},
\\
& H^{[k]}= H+ \epsilon H^{(1)}+ \epsilon^2 H^{(2)}+ \ldots+ \epsilon^kH^{(k)}.
\end{cases}
\]
These are best thought as representatives of formal power series modulo higher powers of $\epsilon$.
They are required to satisfy the \textbf{approximate Apostolov-Salamon equation}
\begin{equation}
\begin{cases}
& d\omega^{[k]}=0, \\
&\frac{1}{2} dh^{[k]}\wedge (\omega^{[k]})^2= \epsilon  d\vartheta^{[k]} \wedge (h^{1/4}\text{Im}\Omega)^{[k]}+ O(\epsilon^{k+2}),
\\
& d (h^{3/4}\text{Re}\Omega)^{[k]} =  - \epsilon d\vartheta^{[k]} \wedge \omega^{[k]}+ O( \epsilon^{k+2} ),
\\
&
d(h^{1/4}\text{Im}\Omega)^{[k]}= 0,
\end{cases}
\end{equation}
and the \textbf{approximate $SU(3)$-structure} constraints
\begin{equation}
\begin{cases}
\omega^{[k]}\wedge \text{Im}\Omega^{[k]}= O(\epsilon^{k+3}), 
\\
(\omega^{[k]})^3= \frac{3}{2} \text{Re}\Omega^{[k]}\wedge \text{Im}\Omega^{[k]} (1+ O(\epsilon^{k+1}) )
\end{cases}
\end{equation}
Here $\text{Re}\Omega^{[k]}$ is canonically determined by $\text{Im}\Omega^{[k]}$ so that $\Omega^{[k]}$ is a complex volume form.
Notice we require at the onset that $\omega^{[k]}$ and $(h^{1/4}\text{Im}\Omega)^{[k]}$ remain closed.

The following version of the \textbf{gauge fixing condition} will be used:
\[
\begin{cases}
\epsilon^{-1}\omega^{[k]}|_{K3}=\omega_y,
\\
(h^{1/4}\text{Im}\Omega)^{[k]}|_{K3}=\epsilon^2d(\text{function})\wedge \omega_y
,
\\
(\omega_1dy_1+\omega_2dy_2)\wedge (h^{1/4}\text{Im}\Omega)^{[k]}= \epsilon^2 (\text{function of $y,\epsilon$}+ O(\epsilon^k) ) \omega_y^2\wedge \omega_S,
\\
H^{[k]}-H \perp [\omega_1], [\omega_2],[\omega_y].
\end{cases}
\]
 We shall also make use of a number of  \textbf{normalisation conditions} during the induction below. For instance, we will require 
\begin{itemize}
\item The correction terms $\omega^{(k)}$ and $\rho^{(k)}$ are exact for all $k$.
\item The boundary value of $H^{[k]}$ on $\partial S$ is fixed to be $H$.
\item The total volume $\int (h^{[k]})^{1/2} (\omega^{[k]})^3= 3\epsilon^2 \int_S h^{1/2} \omega_S(1+O(\epsilon^{k+1}))$.
\end{itemize}
Another more technical auxiliary condition will appear in the inductive hypothesis in section \ref{CalabiYaumonopolesystem}.

\begin{rmk}
By the $O(\epsilon^k)$ notation, we mean $\epsilon^k$ times a formal power series in $\epsilon$ of smooth forms. In particular, if a quantity is $O(\epsilon^k)$, then so are its derivatives in this convention.
\end{rmk}

%\begin{itemize}
%\item $\omega^{(k-1)}$ can still be modified by the pullback of a 2-form on $S$,
%\item 
%\end{itemize}

\begin{thm}
Given a solution of the weighted maximal submanifold equation $H$ on a surface $S$ with boundary, avoiding excess $(-2)$-classes. Then there exists a unique formal power series solution $(h_\epsilon, \vartheta_\epsilon, \omega_\epsilon, \Omega_\epsilon)$ solving the Apostolov-Salamon equation, the $SU(3)$-structure condition, the gauge fixing conditions, and the various normalisation conditions. 
\end{thm}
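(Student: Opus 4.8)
The plan is to construct the formal series order by order in $\epsilon$, exploiting the hierarchical structure of the Apostolov-Salamon system. At stage $k$ we are given $(h^{[k-1]}, \vartheta^{[k-1]}, \omega^{[k-1]}, (h^{1/4}\mathrm{Im}\,\Omega)^{[k-1]}, H^{[k-1]})$ satisfying the approximate equations and gauge conditions modulo $O(\epsilon^{k+1})$, and we must produce the corrections $(h^{(k)}, \theta^{(k+1)}, \omega^{(k)}, \rho^{(k)}, H^{(k)})$ to upgrade to modulo $O(\epsilon^{k+2})$. First I would organise the unknowns according to the decoupling observed in the text: the pair $(h, \vartheta)$ solves (to appropriate order) a Calabi-Yau monopole type system that feeds into the rest, so the corrections $(h^{(k)}, \theta^{(k+1)})$ are determined first, then $H^{(k)}$ is pinned down by the weighted maximal submanifold equation linearised at $H$, and finally $\omega^{(k)}$ and $\rho^{(k)}$ are solved as the output of an overdetermined linear system whose solvability is guaranteed precisely by the monopole equation and by $H$ solving its Euler-Lagrange equation.

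The key steps, in order: (i) set up the induction hypothesis carefully, including the technical auxiliary condition alluded to in the reference to section \ref{CalabiYaumonopolesystem}, and verify the base case is the ansatz of section \ref{Slowcollapsing}; (ii) linearise each of the four Apostolov-Salamon equations and the two $SU(3)$ constraints at the current approximate solution, collecting the $\epsilon^{k+1}$-order error terms, which by the $O(\epsilon^k)$ convention are known smooth forms built from lower-order data and their horizontal derivatives; (iii) decompose every equation into horizontal-vertical types over the K3 fibration, reducing each to a fibrewise problem on $H^2(K3)$ coupled through derivatives on $S$; (iv) solve the fibrewise equations using Hodge theory on the hyperkähler K3 fibres — here the self-dual/anti-self-dual splitting is used repeatedly exactly as in section \ref{Slowcollapsing}, together with $H^1(K3)=H^3(K3)=0$ to trade closed forms for exact ones; (v) reduce the residual cohomological obstructions on $S$ to a scalar elliptic problem, namely the linearisation $\mathcal{L}$ of the weighted area functional, and invoke Lemma \ref{secondvariationnegative}: since $\mathcal{L}$ is negative definite on normal vector fields vanishing on $\partial S$, the boundary value problem with the normalisation $H^{(k)}|_{\partial S}=0$ is uniquely solvable; (vi) use the gauge fixing conditions (Moser's trick to fix $\epsilon^{-1}\omega_\epsilon|_{K3}=\omega_y$, the pseudo-calibration condition (\ref{pseudocalibration}), the volume normalisation, and orthogonality of $H^{(k)}$ to $[\omega_1],[\omega_2],[\omega_y]$) to eliminate all remaining ambiguity and thereby obtain uniqueness; (vii) check that the new data still satisfies $d\omega^{[k]}=0$ and $d(h^{1/4}\mathrm{Im}\,\Omega)^{[k]}=0$ on the nose, and close the induction.

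The main obstacle I expect is step (iv)-(v): showing that at each order the linearised equations are genuinely solvable, i.e. that the $\epsilon^{k+1}$-order obstructions automatically lie in the image of the relevant fibrewise and base operators. This is where the structure of the problem as a constrained variational system must be used — the fact that $H$ is a critical point of $\mathcal{A}_w$ is precisely what makes the overdetermined system for $(\omega^{(k)}, \rho^{(k)})$ consistent, and one has to verify that the integrability conditions (closedness of $\omega_\epsilon$ and of $h_\epsilon^{1/4}\mathrm{Im}\,\Omega_\epsilon$, compatibility of the $SU(3)$ constraints) propagate correctly. A subtle point is that the pseudo-calibration condition (\ref{pseudocalibration}) and the fibrewise symplectic normalisation must be shown to be attainable by the available gauge freedom at every order without over-determining the system; the heuristic discussion in section \ref{gaugefixingissues} (that $\mathcal{L}_v(h_\epsilon^{1/4}\mathrm{Im}\,\Omega_\epsilon)\approx d(a\omega_1+b\omega_2)$ can cancel the $df_1,df_2$ terms but not $df_3$) has to be turned into a precise statement that the residual $df_3\wedge\omega_y$ term is exactly absorbed by the definition of $H_\epsilon$ as a normal perturbation. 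A secondary difficulty is bookkeeping: the coupling of higher horizontal derivatives to higher powers of $\epsilon$ means one must be disciplined about which derivatives of which lower-order terms appear, so the inductive hypothesis needs to track the form-type decomposition and the exactness of $\omega^{(k)},\rho^{(k)}$ explicitly, rather than just an order-of-vanishing estimate.
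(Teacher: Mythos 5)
Your plan reproduces the paper's overall architecture --- induction in powers of $\epsilon$, the Leray/horizontal--vertical filtration, fibrewise Hodge theory on the hyperk\"ahler K3 fibres, and the use of Lemma \ref{secondvariationnegative} to solve for the normal correction $H^{(k)}$ with zero boundary data --- and you correctly diagnose that the whole difficulty is the solvability of the overdetermined system for $(\omega^{(k)},\rho^{(k)})$. But you have mislocated the mechanism that resolves it, and the missing ingredient is the decisive one. After the available freedom in $\rho^{(k)}$ is spent on the $SU(3)$ constraints and the gauge-fixing condition, only two free fibrewise functions remain (the combinations $f_{11}-f_{22}$ and $f_{12}+f_{21}$ in the notation of section \ref{DeformationSU3II}), whereas the residual error $\gamma_k$ in the filtration with one $dy_i$ is a priori built from six fibrewise functions $a_{ij}$. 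The count closes not because $H$ is a critical point of $\mathcal{A}_w$, but because of a pointwise integrability statement: the linear algebra of $SU(3)$-structures, specifically Hitchin's stable-form identity $\iota_v d\,\mathrm{Im}\Omega\wedge\mathrm{Re}\Omega=\iota_v\mathrm{Im}\Omega\wedge d\,\mathrm{Re}\Omega$, shows (Proposition of section \ref{DeformationSU3III}) that once the Calabi--Yau monopole equations and $d(h^{1/4}\mathrm{Im}\Omega)=0$ hold, the $6$-dimensional component of $\epsilon d\vartheta\wedge\omega+d(h^{3/4}\mathrm{Re}\Omega)$ vanishes automatically. This forces $\gamma_k\wedge\iota_v(\omega_1 dy_1+\omega_2 dy_2)=0$ and $\gamma_k\wedge\omega_1=\gamma_k\wedge\omega_2=0$, which kills $a_{13},a_{23}$ and imposes $a_{11}=-a_{22}$, $a_{12}=a_{21}$, leaving exactly the two-function family that the remaining freedom in $\rho^{(k)}$ can cancel. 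Without this identity your steps (iv)--(v) stall: the fibrewise equations simply have no solution for a generic right-hand side.

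The critical-point property of $H$ enters elsewhere and more narrowly than you state: Lemma \ref{secondvariationnegative} is used to invert the Hessian $\mathcal{L}=\mathcal{L}_1+[d\vartheta]\wedge\mathcal{L}_2$ on the normal (fibrewise anti-self-dual) cohomological component of $[\gamma_k]$, and the criticality of $H$ is what preserves the total volume normalisation under this correction. One further point your outline omits: after cancelling the anti-self-dual part of $[\gamma_k]$ one must still dispose of the self-dual (tangential) part; the paper does this via the sharpened integrability $d\vartheta^{[k]}\wedge(\omega^{[k]})^2=O(\epsilon^{k+2})$ of Remark \ref{CYmonopolebetter}, which gives $\gamma_k\wedge\omega_y=O(\epsilon)$, so $\gamma_k$ is fibrewise anti-self-dual harmonic and vanishes once its class is removed. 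Finally, your ordering of the unknowns (solving $H^{(k)}$ before $\omega^{(k)},\rho^{(k)}$) differs from the paper's, where $H^{(k)}$ is extracted near the end from the harmonic (non-exact) part of $\rho^{(k)}$ and drags along compensating corrections to $h^{(k)}$ and $\omega^{(k-1)}$; such a reorganisation might be made to work, but only after the Hitchin-identity integrability is in place.
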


In the sequel we will concentrate on the existence part; the uniqueness is more or less a byproduct.
We shall assume the conditions are satisfied up to $k-1$, and the induction step is roughly to solve the equations in step $k$. The actual order in which the equations are solved is a little more intricate, due to the fact that the hierarchy of equations is not only organized in order of $\epsilon$ but also according to the filtration. That is, when we solve for $h^{(k)}, \vartheta^{(k)}, \omega^{(k)}, \rho^{(k)}, H^{(k)}$, we assume $h^{[k-1]}$ etc to be fixed by induction, except that $\rho^{(k-1)}$ can still be modified by $\omega_S\wedge d(\text{function})$, and $\omega^{(k-1)}$ can still be modified by the pullback of a 2-form from $S$.
We shall describe how to solve the system from an algorithmic viewpoint, namely we repeatedly update the choices to satisfy more and more conditions, thus making the error functions take increasingly special forms, often keeping the same notation.

\subsection{Calabi-Yau monopole system}
\label{CalabiYaumonopolesystem}

The Calabi-Yau monopole system refers to the integrability conditions
\begin{equation}\label{CalabiYaumonopolekthstep}
\begin{cases}
d\vartheta^{[k]} \wedge (\omega^{[k-1]})^2= O(\epsilon^{k+1}),\\
\frac{1}{2} dh^{[k]}\wedge (\omega^{[k-1]})^2=\epsilon  d\vartheta^{[k]} \wedge (h^{1/4}\text{Im}\Omega)^{[k-1]} +O(\epsilon^{k+2}).
\end{cases}
\end{equation}
The word `Calabi-Yau' comes from the analogy with \cite{HaskinsS1}, even though in our setting the 6-manifold needs not be Calabi-Yau. Rewriting the equations in terms of the unknown functions $\theta^{(k)}, h^{(k)}$,
\[
\begin{cases}
&2d\theta^{(k)} \wedge \omega_y \wedge \omega_S = - \epsilon^{-k} d\vartheta^{[k-1]} \wedge (\omega^{[k-1]})^2+ O(\epsilon)= \alpha^{(k)} +O(\epsilon)
\\
& dh^{(k)}\wedge \omega_y\wedge \omega_S- d\theta^{(k)}\wedge (\omega_1dy_1+\omega_2dy_2) 
\\
& = \epsilon^{-k-1}(\frac{1}{2} dh^{[k-1]}\wedge (\omega^{[k-1]})^2-\epsilon  d\vartheta^{[k-1]} \wedge (h^{1/4}\text{Im}\Omega)^{[k-1]}) +O(\epsilon)=\beta^{(k)}+ O(\epsilon).
\end{cases}
\]
From the induction hypothesis, the  $\alpha^{(k)}, \beta^{(k)}$ are closed forms of order $O(1)$, and furthermore we assume the following cohomological requirement by induction
\begin{itemize}
\item $\alpha^{(k)}=\omega_S \wedge $(exact 4-form on K3 fibres),
\item $\beta^{(k)}=dy_1\wedge$(exact 4-form on K3 fibres)$+dy_2$(exact 4-form on K3 fibres) modulo $dy_1\wedge dy_2$.
\end{itemize}
We determine the restriction of $d\theta^{(k)}$ to K3 fibres by prescribing $d\theta^{(k)}\wedge \omega_1, d\theta^{(k)}\wedge \omega_2$, and $d\theta^{(k)}\wedge \omega_y$, which amounts to prescribing the self dual part of an exact 2-form on the hyperk\"ahler K3, and can be done precisely by the cohomological condition on $\alpha^{(k)},\beta^{(k)}$.
Notice that the ambiguity in $\rho^{(k-1)}, \omega^{(k-1)}$ so far plays no role.

After this step we have reduced to $\alpha^{(k)}=0$ and $\beta^{(k)}=dy_1dy_2\wedge$(closed 3-form on K3 fibres). The modification of $\rho^{(k-1)}$ takes place immediately after this, using the rest of the Apostolov-Salamon system, see section \ref{DeformationSU3I} below. Granted that $\rho^{(k-1)}$ is fixed, we proceed with the rest of the Calabi-Yau monopole system. Writing the closed 3-forms as
\[
df_1\wedge \omega_1+ df_2\wedge \omega_2+ df_3\wedge \omega_y,
\]
we see that the $df_1, df_2$ terms can be cancelled by modifying $\theta^{(k)}$ by $ady_1+bdy_2$ depending linearly on $f_1, f_2$, and $df_3$ can be cancelled by adjusting $h^{(k)}$. We have thus satisfied (\ref{CalabiYaumonopolekthstep}), and the remaining ambiguity is
\begin{itemize}
\item $h^{(k)}$ is determined up to a fibrewise constant,
\item $\theta^{(k)}$ is determined up to the pullback of a 2-form on $S$.
\end{itemize}
We shall see that these are fixed by cohomological requirements, up to a global constant on $h^{(k)}$. Notice the ambiguity of $\omega^{(k-1)}$ has not played any role.

Notice that because of the smallness of the correction induced by the yet unknown $\omega^{(k)}, \rho^{(k)}$,   (\ref{CalabiYaumonopolekthstep}) is equivalent to
\[
\begin{cases}
d\vartheta^{[k]} \wedge (\omega^{[k]})^2= O(\epsilon^{k+1}),\\
\frac{1}{2} dh^{[k]}\wedge (\omega^{[k]})^2=\epsilon  d\vartheta^{[k]} \wedge (h^{1/4}\text{Im}\Omega)^{[k]} +O(\epsilon^{k+2}).
\end{cases}
\]
This is of course the statement that $\alpha^{(k+1)}, \beta^{(k+1)}$ are of order $O(1)$. It is not sensitive to the ambiguity of $\omega^{(k-1)}$ caused by the pullback of a 2-form on $S$. The inductive construction of $\omega^{[k]}, (h^{1/4}\text{Im}\Omega)^{[k]}$ guarantees this is exact. Our induction relies on the cohomological requirement that
\begin{itemize}
	\item $\alpha^{(k+1)}=\omega_S \wedge $(exact 4-form on K3 fibres),
	\item $\beta^{(k+1)}=dy_1\wedge$(exact 4-form on K3 fibres)$+dy_2$(exact 4-form on K3 fibres) modulo $dy_1\wedge dy_2$.
\end{itemize}
These exactness conditions amount to conditions on fibrewise integrals. This requirement on $\alpha^{(k+1)}$ can be achieved precisely by adjusting $\theta^{(k)}$ by a 2-form pulled back from $S$. As for $\beta^{(k)}$, it defines an exact $H^4(K3)$-valued 1-form on $S$. Now adjusting $h^{(k)}$ by a function of $y$ we can cancel this exact 1-form, thus achieving the cohomological requirement.
Later in our construction, this cohomological condition is not sensitive to corrections caused by $\omega^{(k)}$, but $\rho^{(k)}$ will damage it in a very specific way. We shall later return to this issue and correct $h^{(k)}$ by a further function depending only on $y\in S$
(\cf section \ref{maximalsubmanifoldsystem}).

The above only concerns $dh^{(k)}$, rather than $h^{(k)}$ itself.
The global constant for $h^{(k)}$ is specified by the total volume normalisation
\[
\int h^{[k]} (\omega^{[k-1]})^3= 3\epsilon^2 \int_S h \omega_S(1+O(\epsilon^{k+1})).
\]
This condition is not sensitive to $\omega^{(k)}$. The place to take care of this global constant is in section \ref{DeformationSU3II}. %It is a priori sensitive to the ambiguity of $\omega^{(k-1)}$ and the adjustment we shall make to $h^{(k)}$, but turns out to survive these corrections by a cancellation effect.

%It will be clear from the construction of $\omega^{[k]}$ and $(h^{1/4}\text{Im}\Omega)^{[k]}$ that the cohomological condition is not sensitive to corrections caused by $\omega^{(k)}, \rho^{(k)}$.

\begin{rmk}
The above discussion is uniform for $k\geq 2$. For $k=1$ this story is basically the same as the determination of $\vartheta$ we saw in section \ref{Slowcollapsing}, the small difference being that $\vartheta$ is a connection while the higher corrections are 1-forms, and the role of the cohomological requirement for $k=1$ is instead played by the prescription of the first Chern class $\frac{1}{2\pi}[d\vartheta]$.
\end{rmk}

\begin{rmk}\label{CYmonopolebetter}
Once the algorithm fixes $\omega^{(k-1)}$ evantually, we can keep $h^{[k]}$ fixed and make an adjustment to $\theta^{(k)}$ with a further $\epsilon$ power in front. By one more iteration of some steps above, we can thus ensure a better condition
\[
d\vartheta^{[k]}\wedge (\omega^{[k-1]})^2 =O(\epsilon^{k+2} ),
\]
while keeping all others.
Notice that independent of the unknown $\omega^{(k)}$, we will still have
\[
d\vartheta^{[k]}\wedge (\omega^{[k]})^2=O(\epsilon^{k+2}),
\]
using that $\omega^{(k)}$ is required to lie in the filtration with at least one $dy_i$ term.
\end{rmk}

\subsection{Deformation of the $SU(3)$-structure I: first filtration}\label{DeformationSU3I}

The rest of the Apostolov-Salamon system that remains to be solved involves
\[
d (h^{3/4}\text{Re}\Omega)^{[k]} =  - \epsilon d\vartheta^{[k]} \wedge \omega^{[k]}+ O( \epsilon^{k+2} ),
\]
together with the $SU(3)$-structure requirements, and the gauge fixing conditions. Essentially the degrees of freedom at our disposal is to vary $\omega^{[k]}$ keeping the closedness condition and the prescribed restriction to K3 fibres, and to vary $(h^{1/4}\text{Im}\Omega)^{[k]}$ keeping the closedness condition. We shall see the problem is rather overdetermined. Our strategy involves solving the equation according to the Leray filtration determined by the K3 fibration structure. In this section we discuss the first filtred piece of the equations, which is tied up with the issue of modifying $\rho^{(k-1)}$. This step takes place after we have specified $\theta^{(k)}|_{K3}$, before we solve the rest of the Calabi-Yau monopole system. 

\begin{rmk}
In the discussion below 
$\rho^{(k)}$ plays a much more major role than $\omega^{(k)}$, which does not affect most equations. The reader can also keep in mind that the increment $\epsilon^{k+1}\omega^{(k)}$ will not be added until almost the end of the algorithm.
\end{rmk}

We write 
\[
\epsilon^{-k-1}(d (h^{3/4}\text{Re}\Omega)^{[k]} + \epsilon d\vartheta^{[k]} \wedge \omega^{[k]})=h^{1/2}d(h^{1/4}\text{Re}\Omega )^{(k)} + \gamma_k+ O( \epsilon )
\]
where $\epsilon^{k+1}(h^{1/4}\text{Re}\Omega )^{(k)}$ denotes the small increment to $(h^{1/4}\text{Re}\Omega)^{[k]}$ induced by the small increment $\epsilon^{k+1}\rho^{(k)}$ for $(h^{1/4}\text{Im}\Omega)^{[k]}$, and
\[
\gamma_k=\epsilon^{-k-1}(d (h^{3/4}\text{Re}\Omega)^{[k-1]} + \epsilon d\vartheta^{[k]} \wedge \omega^{[k-1]}).
\]
Clearly $\gamma_k$ is closed.  Morever, restricted to the K3 fibres, 
\[
[\gamma_k]= \epsilon^{-k}[d\vartheta]\wedge [\omega_y]=0\in H^4(K3).
\]
Notice the restriction of $\gamma_k$ to K3 fibres depends only on $\theta^{(k)}$ restricted to the K3 fibres, not the full solution to the Calabi-Yau monopole system.
We are allowed to modify $\rho^{(k)}$ by a term proportional to $d(f\omega_y)$ compatible with the gauge condition (\ref{pseudocalibration}) and the closedness of $(h^{1/4}\text{Im}\Omega)^{[k]}$, where $f$ is a function to be determined. Modulo higher order in $\epsilon$ and restricted to K3 fibres, the corresponding change to $(h^{1/4}\text{Re}\Omega)^{(k)}$ is $-J df\wedge \omega_y$, where $J$ is the complex structure on the K3 fibres. We recognize the fibrewise equation
\[
d( Jdf\wedge \omega_y )=\gamma_k|_{K3}
\]
is just the Laplace equation, so by the cohomological condition can be solved fibrewise up to fibrewise constants. After this step, we can assume $\gamma_k|_{K3}=0$. To fix $f$ as a function of $y$, we notice $\omega^{[k-1]}\wedge (h^{1/4}\text{Im}\Omega)^{[k-1]}$ is an exact 5-form so defines an exact $H^4(K3)$-valued 1-form, hence can be made to vanish precisely by utilizing $f$ as a function of $y$. This determines $f$ up to a global constant, which does not affect $\rho^{(k)}$.

The subtle issue is that while by induction
\[
\omega^{[k-1]}\wedge (h^{1/4}\text{Im}\Omega)^{[k-1]}= O(\epsilon^{k+2}),
\]
the term $df\wedge \omega_y$ in $\rho^{(k)}$ contributes to $\omega^{[k]}\wedge (h^{1/4}\text{Im}\Omega)^{[k]}$ by
$\epsilon^{k+1}df\wedge \omega_y\wedge \omega_S$. This needs to be compensated by modifying $\rho^{(k-1)}$ by $-d(f\omega_S)$. This fixes the little ambiguity of $\rho^{(k-1)}$. In the algorithm, we return to the Calabi-Yau monopole system and solve for $h^{(k)}, \theta^{(k)}$. In the remaining discussions, we shall take $h^{[k]}, \vartheta^{[k]}$ as fixed except when announced otherwise, and we have
\[
%\begin{cases}
\omega^{[k]}\wedge \text{Im}\Omega^{[k]}= O(\epsilon^{k+2}),
%\end{cases}
\]
which we seek to improve.

\subsection{Deformation of the $SU(3)$-structure II: second filtration}
\label{DeformationSU3II}

Next we consider the filtration with $dy_1$ or $dy_2$ terms. We are allowed to modify $\rho^{(k)}$ by a term proportional to 
\[
dy_1 \wedge d(\text{1-form on K3})+ dy_2 \wedge d(\text{1-form on K3}).
\]
An efficient way to prescribe an exact 2-form on K3 fibres is to prescribe its self dual part:
\[
f_{11}\omega_1+ f_{12}\omega_2+f_{13}\omega_y, \quad f_{21}\omega_1+ f_{22}\omega_2+f_{23}\omega_y
\]
where $\int_{K3} f_{ij}\omega_y^2=0$. We write the increment of $\rho^{(k)}$ modulo $dy_1\wedge dy_2$ term as
\[
dy_1 \wedge (f_{11}\omega_1+ f_{12}\omega_2+f_{13}\omega_y+ASD_1)   + dy_2 \wedge (f_{21}\omega_1+ f_{22}\omega_2+f_{23}\omega_y+ASD_2)
\]
Working at a given fibre in the coodinates with $g_{ij}=\delta_{ij}$,
the corresponding change to $(h^{1/4}\text{Re}\Omega)^{(k)}$ modulo $dy_1dy_2$ term is
\[
dy_1 (ASD_2+f_{23}\omega_y+f_{11}\omega_2-f_{12}\omega_1) + dy_2(-ASD_1-f_{13}\omega_y+f_{21}\omega_2-f_{22}\omega_1  ).
\]
Using $d\rho^{(k)}=0$ to eliminate the ASD terms, the change to $d(h^{1/4}\text{Re}\Omega)^{(k)}$ modulo $dy_1dy_2$ term is
\[
d(f_{11}-f_{22})\wedge (\omega_2dy_1+\omega_1dy_2)+ d(f_{12}+f_{21}) \wedge (\omega_2dy_2-\omega_1dy_1).
\]
Notice the answer depends only on two combination of the 6 functions $f_{ij}$, an indication that integrability is at play.

The roles of the other 4 functions are as follows:
\begin{itemize}
\item The horizontal-vertical $(1,4)$ component of $\omega^{[k]}\wedge \text{Im}\Omega^{[k]}$ is controlled precisely by $f_{13}, f_{23}$. 
By adjusting $f_{13}, f_{23}$ we can make $\omega^{[k]}\wedge (h^{1/4}\text{Im}\Omega)^{[k]}=O(\epsilon^{k+3})$ modulo $dy_1\wedge dy_2$ term.
Here notice that the yet unknown $\omega^{(k)}$ has no effect to $O(\epsilon^{k+2})$ order due to our choice to fix $\omega_y$ as the restriction to K3 fibres. Notice also that the condition $\int_{K3} f_{i3}\omega_y^2=0$ is precisely compatible with our previous cohomological normalisation on $\omega^{[k-1]}\wedge (h^{1/4}\text{Im}\Omega)^{[k-1]}$.

\item
The volume normalisation on $\text{Re}\Omega^{[k]}\wedge \text{Im}\Omega^{[k]}$ is controlled by $f_{11}+f_{22}$. By possibly modifying $\omega^{(k-1)}$ by the pullback of a 2-form on $S$, which does not affect the inductive hypotheses, we can ensure that
\[
(h^{[k]})^{1/2}(\omega^{[k-1]})^3= \frac{3}{2}(1+O(\epsilon^k)) (h^{1/4}\text{Re}\Omega)^{[k-1]}\wedge (h^{1/4}\text{Im}\Omega)^{[k-1]},
\]
where the $O(\epsilon^k)$ function has \emph{fibrewise integral zero} with respect to $\omega_y^2$. By allowing $h^{(k)}$ to drift by a global constant, accompanied by a corresponding change in $\omega^{(k-1)}$ so as to preserve the fibrewise integral zero condition, we can enforce the total volume normalisation on
\[
\int h^{[k]} (\omega^{[k-1]})^3
\] 
as promised in section \ref{CalabiYaumonopolesystem}.

Now the $f_{11}+f_{12}$ term gives rise to a term proportional to $\text{Im}\Omega$ in $\rho^{(k)}$, whose effect is to modify $(h^{1/4}\text{Re}\Omega)^{[k]}\wedge (h^{1/4}\text{Im}\Omega)^{[k]}$, and we can use this to cancel the $O(\epsilon^k)$ term. This ensures
\[
(\omega^{[k]})^3= \frac{3}{2}(1+O(\epsilon^{k+1})) \text{Re}\Omega^{[k]}\wedge \text{Im}\Omega^{[k]},
\]
since $\omega^{(k)}$ does not give rise to corrections to volume of relative order $O(\epsilon^k)$. Notice again that we need the cohomological normalisation to be compatible with $\int_{K3} f_{ij}\omega_y^2=0$.

\item
The combination $f_{12}-f_{21}$ controls the gauge fixing condition on $(h^{1/4}\text{Im}\Omega)^{[k]}\wedge (\omega_1dy_1+\omega_2dy_2)$. It almost renders the expression zero modulo $O(\epsilon^{k+2})$, except that the integral normalisation $\int_{K3} f_{ij}\omega_y^2=0$ means that we cannot remove a fibrewise constant depending on $y\in S$.

\end{itemize}

To summarize, while keeping the closedness of $(h^{1/4}\text{Im}\Omega)^{[k]}$, there are six free functions at our disposal, which we can use to achieve three constraints in the $SU(3)$-structure condition, one gauge fixing condition, and the two others can be used to adjust $\gamma_k$ by 
\[
da\wedge (\omega_2dy_1+\omega_1dy_2)+ db\wedge (\omega_2dy_2-\omega_1dy_1) \quad \mod dy_1\wedge dy_2 
\]
for any two prescribed function $a, b$ on the fibres.

\subsection{Deformation of the $SU(3)$-structure III: integrability}
\label{DeformationSU3III}

The crucial feature of the Apostolov-Salamon system is that while the equations governing the deformation of $h^{1/4}\text{Im}\Omega$ are quite overdetermined, there are additional integrability from the Calabi-Yau monopole system that comes to the rescue. To explain the key ideas, we will present the argument without keeping track of the power of $\epsilon$ error everywhere.

We begin with some basic linear algebra of $SU(3)$-structures:

\begin{lem}
Let $(\omega, \Omega)$ be an $SU(3)$-structure, and $v$ be a vector field. Then
\[
\begin{cases}
\omega^2\wedge \iota_v \omega= - \text{Re} \Omega \wedge \iota_v \text{Im}\Omega,
\\
\iota_v \text{Re}\Omega=\iota_{Jv} \text{Im}\Omega,
\\
\text{Im}\Omega\wedge \iota_{Jv}\omega= \omega\wedge \iota_{v} \text{Re}\Omega,
\\
\iota_v d\text{Im}\Omega \wedge \text{Re}\Omega =\iota_v \text{Im}\Omega \wedge d\text{Re}\Omega
\end{cases}
\]
Here the last identity comes from  Hitchin's stable 3-forms, and expresses the relation between the 6-dimensional components of $d\text{Re}\Omega, d\text{Im}\Omega$.
\end{lem}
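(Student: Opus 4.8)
The plan is to deduce all four identities from two structural inputs, avoiding any coordinate computation. The first input is that $\Omega$ is of Hodge type $(3,0)$ for the almost complex structure $J$ determined by the $SU(3)$-structure; in complex dimension three this forces, for every real vector field $v$, both $\iota_v\Omega\wedge\Omega=0$ (a $(5,0)$-form) and $\iota_{Jv}\Omega=i\,\iota_v\Omega$ (only the $(1,0)$-part of $v$ contracts $\Omega$, and $J$ acts there as multiplication by $i$). The second input is the algebraic relations $\omega\wedge\Omega=0$ and $\omega^3=\frac{3}{2}\text{Re}\,\Omega\wedge\text{Im}\,\Omega$, together with the triviality of $7$-forms on a $6$-manifold. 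I treat the three algebraic identities first, then the one involving exterior derivatives.

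\textbf{The first three identities.} Splitting $\iota_{Jv}\Omega=i\,\iota_v\Omega$ into real and imaginary parts gives $\iota_{Jv}\text{Re}\,\Omega=-\iota_v\text{Im}\,\Omega$ and $\iota_{Jv}\text{Im}\,\Omega=\iota_v\text{Re}\,\Omega$; the second line is the second identity. For the third, contract $\omega\wedge\Omega=0$ with $Jv$ to get $\iota_{Jv}\omega\wedge\Omega=-\omega\wedge\iota_{Jv}\Omega$; taking imaginary parts, substituting $\iota_{Jv}\text{Im}\,\Omega=\iota_v\text{Re}\,\Omega$, and commuting the degree-one form $\iota_{Jv}\omega$ past the degree-three form $\text{Im}\,\Omega$ yields $\text{Im}\,\Omega\wedge\iota_{Jv}\omega=\omega\wedge\iota_v\text{Re}\,\Omega$. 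For the first, contract $\omega^3=\frac{3}{2}\text{Re}\,\Omega\wedge\text{Im}\,\Omega$ with $v$ to obtain $3\,\omega^2\wedge\iota_v\omega=\frac{3}{2}(\iota_v\text{Re}\,\Omega\wedge\text{Im}\,\Omega-\text{Re}\,\Omega\wedge\iota_v\text{Im}\,\Omega)$, and then verify the auxiliary relation $\iota_v\text{Re}\,\Omega\wedge\text{Im}\,\Omega=-\text{Re}\,\Omega\wedge\iota_v\text{Im}\,\Omega$ by expanding $\text{Re}\,\Omega=\frac{1}{2}(\Omega+\bar\Omega)$, $\text{Im}\,\Omega=\frac{1}{2i}(\Omega-\bar\Omega)$ and cancelling all surviving terms using $\iota_v\Omega\wedge\Omega=\iota_v\bar\Omega\wedge\bar\Omega=0$; this collapses the right-hand side to $-3\,\text{Re}\,\Omega\wedge\iota_v\text{Im}\,\Omega$.

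\textbf{The fourth identity.} I would start from $\iota_v\Omega\wedge\Omega=0$ and apply $d$, obtaining $d(\iota_v\Omega)\wedge\Omega+\iota_v\Omega\wedge d\Omega=0$. By Cartan's formula $\mathcal{L}_v\Omega=d(\iota_v\Omega)+\iota_v\,d\Omega$, this rearranges to $\iota_v\,d\Omega\wedge\Omega=\iota_v\Omega\wedge d\Omega+\mathcal{L}_v\Omega\wedge\Omega$. The point is that $\mathcal{L}_v\Omega\wedge\Omega=0$: since $d\Omega$ lies in $\Lambda^{3,1}\oplus\Lambda^{2,2}$ (the $(4,0)$ and negative components vanishing for dimension reasons) and $d(\iota_v\Omega)\in\Lambda^{3,0}\oplus\Lambda^{2,1}\oplus\Lambda^{1,2}$, contracting each of these with $v$ produces no $(0,3)$-component, so $\mathcal{L}_v\Omega$ has no $(0,3)$-component, and a $3$-form with no $(0,3)$-part wedged against the $(3,0)$-form $\Omega$ is zero. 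Hence $\iota_v\,d\Omega\wedge\Omega=\iota_v\Omega\wedge d\Omega$; taking imaginary parts gives
\[
\iota_v\,d\text{Re}\,\Omega\wedge\text{Im}\,\Omega+\iota_v\,d\text{Im}\,\Omega\wedge\text{Re}\,\Omega=\iota_v\,\text{Re}\,\Omega\wedge d\text{Im}\,\Omega+\iota_v\,\text{Im}\,\Omega\wedge d\text{Re}\,\Omega .
\]
Finally, contracting the two trivial $7$-form identities $d\text{Re}\,\Omega\wedge\text{Im}\,\Omega=0$ and $\text{Re}\,\Omega\wedge d\text{Im}\,\Omega=0$ with $v$ gives $\iota_v\,d\text{Re}\,\Omega\wedge\text{Im}\,\Omega=-\iota_v\,\text{Im}\,\Omega\wedge d\text{Re}\,\Omega$ and $\iota_v\,\text{Re}\,\Omega\wedge d\text{Im}\,\Omega=-\iota_v\,d\text{Im}\,\Omega\wedge\text{Re}\,\Omega$; substituting these into the displayed equation cancels two pairs of terms and leaves exactly $\iota_v\,d\text{Im}\,\Omega\wedge\text{Re}\,\Omega=\iota_v\,\text{Im}\,\Omega\wedge d\text{Re}\,\Omega$.

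\textbf{Main obstacle.} The delicate step is the vanishing $\mathcal{L}_v\Omega\wedge\Omega=0$: trying to obtain it by differentiating $\Omega\wedge\Omega=0$ along the flow of $v$ only produces $0=0$, since the wedge of two $3$-forms is graded-symmetric, so one genuinely needs the Hodge-type observation that $\mathcal{L}_v\Omega$ acquires no component along $\bar\Omega$. Apart from that, the only hazard is tracking Koszul signs when contracting and when reordering forms of different degrees. Alternatively, the three pointwise identities can be checked directly in the flat model on $\mathbb{C}^3$ and propagated by $SU(3)$-invariance (transitivity of $SU(3)$ on the unit sphere reduces them to a single vector $v$), and the fourth by the same reduction applied to arbitrary $1$-jets of $SU(3)$-structures; but the coordinate-free argument above is cleaner.
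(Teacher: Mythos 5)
The paper states this lemma without proof (it is introduced as ``basic linear algebra of $SU(3)$-structures''), so there is no argument of the author's to compare against; your proof has to be judged on its own, and it is correct. I checked all the Koszul signs: the contraction of $\omega\wedge\Omega=0$ with $Jv$, the reordering $\iota_{Jv}\omega\wedge\text{Im}\Omega=-\text{Im}\Omega\wedge\iota_{Jv}\omega$, the auxiliary relation $\iota_v\text{Re}\Omega\wedge\text{Im}\Omega=-\text{Re}\Omega\wedge\iota_v\text{Im}\Omega$ (which indeed reduces to $\iota_v\Omega\wedge\Omega=\iota_v\bar\Omega\wedge\bar\Omega=0$), and the two $7$-form contractions at the end all come out as you state. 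Your identification of the genuinely nontrivial step is also right: $\mathcal{L}_v\Omega\wedge\Omega=0$ cannot be obtained by differentiating $\Omega\wedge\Omega=0$ and requires the type argument, and your type bookkeeping ($d\Omega\in\Lambda^{3,1}\oplus\Lambda^{2,2}$, hence $\iota_vd\Omega$ and $d\iota_v\Omega$ have no $(0,3)$-part) is valid on an arbitrary almost complex $6$-manifold, which is the generality needed here since the $SU(3)$-structure is not assumed torsion-free. Two small remarks. First, the same type count shows that $\iota_vd\Omega\wedge\Omega$ and $\iota_v\Omega\wedge d\Omega$ each vanish separately (every type occurring in $\iota_vd\Omega$ wedges to zero against a $(3,0)$-form, and $(2,0)\wedge\bigl(\Lambda^{3,1}\oplus\Lambda^{2,2}\bigr)=0$); taking the imaginary part of $\iota_vd\Omega\wedge\Omega=0$ alone already yields your displayed equation, so the Cartan-formula detour through $\mathcal{L}_v\Omega$ can be skipped entirely. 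Second, the paper attributes the fourth identity to Hitchin's theory of stable $3$-forms, i.e.\ to the fact that $\text{Im}\Omega$ is the Hitchin dual of the stable form $\text{Re}\Omega$; your derivation from the $(3,0)$ type condition is an equivalent but more elementary route and is perfectly adequate for the way the lemma is used in the integrability argument of section 3.6.
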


\begin{prop}
Assume $(\omega_\epsilon, \Omega_\epsilon)$ is an $SU(3)$-structure, $h_\epsilon$ a positive valued function, and $d\vartheta_\epsilon$ a closed 2-form, satisfying
\[
\begin{cases}
%d\omega_\epsilon=0,
%\\
\epsilon h_\epsilon^{1/4} d\vartheta_\epsilon \wedge \text{Im}\Omega_\epsilon= \frac{1}{2} dh_\epsilon \wedge \omega_\epsilon^2,
\\
d(h_\epsilon^{1/4}\text{Im}\Omega_\epsilon)=0,
\end{cases}
\]
then for any vector field $v$,
\[
( \epsilon d\vartheta_\epsilon \wedge \omega_\epsilon+ d(\text{Re}\Omega_\epsilon h_\epsilon^{3/4}  )  )\wedge \iota_v \text{Re}\Omega_\epsilon=0,
\]
namely the 6-dimensional component of $\epsilon d\vartheta_\epsilon \wedge \omega_\epsilon+ d(\text{Re}\Omega_\epsilon h_\epsilon^{3/4}  )$ vanishes automatically.
\end{prop}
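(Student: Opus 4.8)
The plan is to verify the displayed $6$-form identity by a direct manipulation built entirely from the four linear-algebra identities of the preceding lemma and the two hypotheses; the statement is pointwise and $SU(3)$-equivariant, and the phrase ``$6$-dimensional component'' is just the interpretation arising from the fact that wedging a $4$-form with $\iota_v\text{Re}\Omega$ detects precisely its $\Lambda^4_6$-part. I drop the subscript $\epsilon$. First I would split the left hand side into the ``$d\vartheta$-term'' $\epsilon\, d\vartheta\wedge\omega\wedge\iota_v\text{Re}\Omega$ and the ``$d(h^{3/4}\text{Re}\Omega)$-term'' $d(h^{3/4}\text{Re}\Omega)\wedge\iota_v\text{Re}\Omega$, with the aim of rewriting \emph{both} as multiples of the single $6$-form $dh\wedge\text{Re}\Omega\wedge\iota_v\text{Re}\Omega$; the point will be that the two hypotheses are exactly the relations needed to make the resulting coefficients cancel.

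For the $d\vartheta$-term I would use the third lemma identity $\omega\wedge\iota_v\text{Re}\Omega=\text{Im}\Omega\wedge\iota_{Jv}\omega$ to rewrite it as $\epsilon\,(d\vartheta\wedge\text{Im}\Omega)\wedge\iota_{Jv}\omega$, then substitute the first hypothesis in the rearranged form $\epsilon\, d\vartheta\wedge\text{Im}\Omega=\tfrac12 h^{-1/4}\,dh\wedge\omega^2$ (legitimate since $h>0$), and finally apply the first and second lemma identities to get $\omega^2\wedge\iota_{Jv}\omega=-\text{Re}\Omega\wedge\iota_{Jv}\text{Im}\Omega=-\text{Re}\Omega\wedge\iota_v\text{Re}\Omega$. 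This should present the $d\vartheta$-term as $-\tfrac12 h^{-1/4}\,dh\wedge\text{Re}\Omega\wedge\iota_v\text{Re}\Omega$.

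For the $d(h^{3/4}\text{Re}\Omega)$-term I would expand by Leibniz into $\tfrac34 h^{-1/4}\,dh\wedge\text{Re}\Omega$ and $h^{3/4}\,d\text{Re}\Omega$; the first wedges with $\iota_v\text{Re}\Omega$ to give $\tfrac34 h^{-1/4}\,dh\wedge\text{Re}\Omega\wedge\iota_v\text{Re}\Omega$ at once. For the $d\text{Re}\Omega$ part I would rewrite $\iota_v\text{Re}\Omega=\iota_{Jv}\text{Im}\Omega$ (second lemma identity), apply the Hitchin (fourth) identity to obtain $d\text{Re}\Omega\wedge\iota_v\text{Re}\Omega=\iota_{Jv}d\text{Im}\Omega\wedge\text{Re}\Omega$, then insert the second hypothesis $d(h^{1/4}\text{Im}\Omega)=0$, i.e. $d\text{Im}\Omega=-\tfrac14 h^{-1}\,dh\wedge\text{Im}\Omega$, and distribute $\iota_{Jv}$ over the product (again using $\iota_{Jv}\text{Im}\Omega=\iota_v\text{Re}\Omega$); this should yield $h^{3/4}\,d\text{Re}\Omega\wedge\iota_v\text{Re}\Omega=-\tfrac14 h^{-1/4}(Jv\cdot h)\,\text{Im}\Omega\wedge\text{Re}\Omega+\tfrac14 h^{-1/4}\,dh\wedge\text{Re}\Omega\wedge\iota_v\text{Re}\Omega$. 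Adding the three contributions, the $dh\wedge\text{Re}\Omega\wedge\iota_v\text{Re}\Omega$ coefficients total $-\tfrac12+\tfrac34+\tfrac14=\tfrac12$, so the left hand side reduces to $\tfrac12 h^{-1/4}\,dh\wedge\text{Re}\Omega\wedge\iota_v\text{Re}\Omega-\tfrac14 h^{-1/4}(Jv\cdot h)\,\text{Im}\Omega\wedge\text{Re}\Omega$.

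The final step is the pointwise identity $dh\wedge\text{Re}\Omega\wedge\iota_v\text{Re}\Omega=\tfrac12(Jv\cdot h)\,\text{Im}\Omega\wedge\text{Re}\Omega$: starting from $\omega^2\wedge\iota_{Jv}\omega=-\text{Re}\Omega\wedge\iota_v\text{Re}\Omega$ (already obtained above) and combining it with $\iota_{Jv}\omega^3=3\,\omega^2\wedge\iota_{Jv}\omega$ and the $SU(3)$ normalization $\omega^3=\tfrac32\text{Re}\Omega\wedge\text{Im}\Omega$, one gets $\text{Re}\Omega\wedge\iota_v\text{Re}\Omega=-\tfrac12\,\iota_{Jv}(\text{Re}\Omega\wedge\text{Im}\Omega)$; wedging with $dh$ and using $\alpha\wedge\iota_w\Xi=\alpha(w)\,\Xi$ for a top-form $\Xi$ (plus $\text{Re}\Omega\wedge\text{Im}\Omega=-\text{Im}\Omega\wedge\text{Re}\Omega$) then finishes it. Substituting back, the remaining expression becomes $(\tfrac12\cdot\tfrac12-\tfrac14)\,h^{-1/4}(Jv\cdot h)\,\text{Im}\Omega\wedge\text{Re}\Omega=0$, as claimed. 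I do not expect a genuine obstacle here: the argument is a chain of identities, and the only thing that really demands care is the sign- and degree-bookkeeping in the interior products; the conceptual heart is simply that the two hypotheses are precisely what collapse the two ``torsion'' inputs --- $dh\wedge\omega^2$ and $d\text{Re}\Omega$ (the latter through Hitchin's identity) --- onto the same invariant so that they cancel.
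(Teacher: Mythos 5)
Your argument is correct and follows essentially the same route as the paper: the same splitting of the $6$-form, the same conversion of the $d\vartheta_\epsilon$-term via $\omega\wedge\iota_v\text{Re}\Omega=\text{Im}\Omega\wedge\iota_{Jv}\omega$ and the first hypothesis, and the same reliance on Hitchin's stable-form identity together with $\omega^2\wedge\iota_{Jv}\omega=-\text{Re}\Omega\wedge\iota_v\text{Re}\Omega$. The only cosmetic difference is that the paper applies Hitchin's identity to the rescaled form $h_\epsilon^{1/4}\Omega_\epsilon$, whose imaginary part is closed by hypothesis, which gives $d(h_\epsilon^{1/4}\text{Re}\Omega_\epsilon)\wedge\iota_v\text{Re}\Omega_\epsilon=0$ at once and avoids the extra $(Jv\cdot h)$ term that you must cancel at the end by a further pointwise identity.
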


\begin{proof}
We compute using the first equation
\[
\epsilon d\vartheta_\epsilon \wedge \omega_\epsilon \wedge \iota_v \text{Re}\Omega_\epsilon= \epsilon  d\vartheta_\epsilon \wedge \text{Im}\Omega_\epsilon\wedge \iota_{Jv} \omega=\frac{1}{2} h_\epsilon^{-1/4} dh_\epsilon\wedge \omega_\epsilon^2 \wedge \iota_{Jv}\omega_\epsilon.
\]
By the second equation and Hitchin's stable form identity applied to $h_\epsilon^{1/4}\text{Im}\Omega_\epsilon$,
\[
0=d(\text{Im}\Omega_\epsilon h_\epsilon^{1/4})\wedge \iota_{Jv}\text{Re}\Omega_\epsilon= -\iota_{Jv} \text{Im}\Omega_\epsilon \wedge d(\text{Re}\Omega_\epsilon h_\epsilon^{1/4})= -\iota_{v} \text{Re}\Omega_\epsilon \wedge d(\text{Re}\Omega_\epsilon h_\epsilon^{1/4}) , 
\]
whence
\[
d(\text{Re}\Omega_\epsilon h_\epsilon^{3/4}) \wedge \iota_v \text{Re}\Omega_\epsilon= \frac{1}{2}h_\epsilon^{-1/4} dh_\epsilon\wedge \text{Re}\Omega_\epsilon\wedge \iota_{v}\text{Re}\Omega_\epsilon.
\]
The claim then follows from
\[
\omega_\epsilon^2 \wedge \iota_{Jv}\omega_\epsilon=- \text{Re}\Omega_\epsilon\wedge \iota_{Jv}\text{Im}\Omega_\epsilon= - \text{Re}\Omega_\epsilon\wedge \iota_{v}\text{Re}\Omega_\epsilon.
\]
\end{proof}

In our application, we use $h^{[k]}, \vartheta^{[k]}$, satisfying the Calabi-Yau monopole system (\ref{CalabiYaumonopolekthstep}). The $(h^{1/4}\text{Im}\Omega)^{[k]}$ term is always closed. The error to the $SU(3)$-structure, after our initial stages of correction, is
\[
\begin{cases}
(\omega^{[k]})^3= \frac{3}{2}(1+O(\epsilon^{k+1})) \text{Re}\Omega^{[k]} \wedge \text{Im}\Omega^{[k]},
\\
\omega^{[k]}\wedge \text{Im}\Omega^{[k]}=O(\epsilon^{k+3}) \quad \mod dy_1\wedge dy_2,
\\
\omega^{[k]}\wedge \text{Im}\Omega^{[k]}=O(\epsilon^{k+2})
\end{cases}
\]
Restoring the powers of $\epsilon$ into the above argument, we reach an important conclusion: modulo $O(\epsilon)$, the error function $\gamma_k$ satisfies
\begin{equation}\label{integrabilityCYmonopole}
\begin{cases}
\gamma_k\wedge \iota_v(\omega_1dy_1+ \omega_2dy_2)=0, \quad \text{for any vertical vector $v$},
\\
\gamma_k \wedge \omega_1=0,\quad \gamma_k\wedge \omega_2=0.
\end{cases}
\end{equation}
Now $d\gamma_k=0$ by construction, and at this stage $\gamma_k$ lies in the filtration with at least one $dy_i$ term, so modulo $dy_1dy_2$ terms we can write
\[
\gamma_k= (da_{11}\wedge \omega_1+da_{12}\wedge\omega_2+ da_{13}\omega_y)dy_1+ (da_{21}\wedge \omega_1+da_{22}\wedge\omega_2+ da_{23}\wedge\omega_y)dy_2
\]
for some functions $a_{ij}$ with fibre average zero. At a given fibre we work in the coordinate with $g_{ij}=\delta_{ij}$.  From the first part of the integrability condition (\ref{integrabilityCYmonopole}),
\[
(da_{11}\wedge \omega_1+da_{12}\wedge\omega_2+ da_{13}\wedge\omega_y)=J(da_{21}\wedge \omega_1+da_{22}\wedge\omega_2+ da_{23}\wedge\omega_y)
\]
on the K3 fibre. Taking $d$ on both sides gives $a_{23}=0$, so by symmetry $a_{13}=0$. Now
\[
da_{11}\wedge \omega_1+da_{12}\wedge\omega_2=J(da_{21}\wedge \omega_1+da_{22}\wedge\omega_2)=da_{21}\wedge \omega_2-da_{22}\wedge \omega_1 ,
\]
so by uniqueness of such a decomposition $a_{11}=-a_{22}$ and $a_{12}=a_{21}$. Thus
\[
\gamma_k= da_{11}\wedge (\omega_1dy_1-\omega_2dy_2)+ da_{12}(\omega_2dy_1- \omega_1dy_2) \quad \mod dy_1dy_2.
\]
But this is precisely of the form we can cancel by adjusting $\rho^{(k)}$, as discussed in section \ref{DeformationSU3II}. The conclusion is that we can now assume $\gamma_k$ is a multiple of $dy_1\wedge dy_2$.

\subsection{Weighted maximal submanifold system}\label{maximalsubmanifoldsystem}

At this stage 
\[
\gamma_k=dy_1\wedge dy_2\wedge (\text{closed 2-forms on K3})
\]
Thus $\gamma_k$ defines an $H^2(K3)$-valued 2-form on $S$, which we denote as $[\gamma_k]$. There is a natural decomposition of $[\gamma_k]$ into fibrewise self dual and ASD parts, or equivalently the tangential and normal component to the image of $H: S\to H^2(K3)$. Our next goal is to remove the ASD part of $[\gamma_k]$.

To correct this error, we reflect on the degrees of freedom not yet used. We have already adjusted $\rho^{(k)}$ by corrections of the form 
\[
dy_1(\text{exact 2-form on K3})+dy_2(\text{exact 2-form on K3})\quad \mod dy_1\wedge dy_2.
\]
However, the actual constraint is that $\rho^{(k)}$ should be exact, which leaves the possibility of adjusting by harmonic 2-forms in place of exact 2-forms, corresponding to the deformation of $H^{[k]}$ (\cf the last part of section \ref{gaugefixingissues}).
%Recall that the geometric interpretation of the polarized positive section $H$ is that its derivatives give the class of $\omega_1, \omega_2$. Thus it is natural to heuristically think of further adjustment by harmonic 2-forms as deforming the choice of $H$. (This conceptualization has its limitations, since the fact that %$(h^{1/4}\text{Im}\Omega)^{[k]}
%$ does not vanish on K3 fibres makes it hard to directly associate a positive polarized section to a solution of the Apostolov-Salamon system).
Given a vector field $H^{(k)}: S\to H^2(K3)$ normal to the weighted maximal submanifold defined by the image of $H$, we can view $\frac{\partial H^{(k)}}{\partial y_i}$ for $i=1,2$ as specifying the data of the families of harmonic 2-forms. The normality condition can be thought as a gauge choice, related to the diffeomorphism of $S$. The requirement that the two families of harmonic forms come from the gradient of a single $H^{(k)}$, means that we can arrange $\rho^{(k)}$ to be exact, instead of just being closed modulo $dy_1\wedge dy_2$.

One can then calculate the response of $(h^{1/4}\text{Re}\Omega)^{(k)}$. By reexamining the calculations about the exact forms in section \ref{DeformationSU3II}, one sees that the change in $d (h^{1/4}\text{Re}\Omega)^{(k)}$ lies in the filtration with $dy_1\wedge dy_2$ terms, so defines an $H^2(K3)$-valued 2-form on $S$. However, we need to be careful not to ruin the various conditions we already achieved. The fact that $H^{(k)}$ is orthogonal to the constant class $[\omega_y]$ implies that for the change in $\rho^{(k)}$ induced by $H^{(k)}$,
\[
\rho^{(k)}\wedge \omega_y=O(\epsilon), \quad \mod dy_1\wedge dy_2,
\]
so we still have
\[
\omega^{[k]}\wedge \text{Im}\Omega^{[k]}=O(\epsilon^{k+3})
\quad \mod dy_1\wedge dy_2.
\]
A more subtle effect happens to the Calabi-Yau monopole system. In section \ref{CalabiYaumonopolesystem} we required the cohomological condition that the $H^4(K3)$-valued 1-form on $S$ defined by $\beta^{(k+1)}$ vanishes. This forces us to adjust $h^{(k)}$ by a function depending only on $y$, satisfying an equation on $S$
\[
\frac{1}{2}dh^{(k)} [\omega_y]^2= [d\vartheta] \wedge dH^{(k)}.
\]
We take $h^{(k)}= 2[d\vartheta]\cdot H^{(k)}$ compatible with (\ref{hviaH}); the ultimate reason for this choice has to do with total volume normalisation, see below.
Thus the response of $d(h^{3/4}\text{Re}\Omega)^{(k)}$ is still in the $dy_1\wedge dy_2$ filtration, and defines an $H^2(K3)$-valued 2-form on $S$. This can be identified as follows: consider the map
\[
\tilde{H} \mapsto \frac{\partial }{\partial y_j} ( \tilde{h}^{1/2} \tilde{g}^{ij}\sqrt{\det \tilde{g}} \frac{\partial \tilde{H}}{\partial y_i}) dy_2\wedge dy_1, \quad \tilde{g}_{ij}= \frac{\partial \tilde{H} }{\partial y_i}\cdot \frac{\partial \tilde{H} }{\partial y_j},\quad \tilde{h}= 2[d\vartheta]\cdot \tilde{H},
\]
and denotes its first variation at the map $H$ as $\mathcal{L}_1$. Then the $H^2(K3)$-valued 2-form induced by the variation $H^{(k)}$ is $\mathcal{L}_1(H^{(k)})$. This can be seen by reexamining the derivation of the weighted maximal submanifold equation in section \ref{Slowcollapsing}.

The next subtle effect happens to the volume normalisation condition in the $SU(3)$-structure. To leading order, the increment of $(h^{[k]})^{1/2}(\omega^{[k]})^{3}$ induced by adjusting $h^{(k)}$ is
\[
3\epsilon^{k+2} ([d\vartheta]\cdot H^{(k)}) h^{-1/2}\omega_y^2\wedge \omega_S,
\]
and the increment of $(h^{1/4}\text{Re}\Omega)^{[k]}\wedge (h^{1/4}\text{Im}\Omega)^{[k]}$ induced by $\rho^{(k)}$ is after an instructive exercise
\[
2\epsilon^{k+2}\omega_S \wedge \omega_y^2 \left(g^{ij}\frac{\partial H}{\partial y_i} \cdot \frac{\partial H^{(k)}}{\partial y_j}\right).
\]
To preserve the volume normalisation
\[
(h^{[k]})^{1/2} (\omega^{[k]})^3= \frac{3}{2} (h^{1/4}\text{Re}\Omega)^{[k]}\wedge (h^{1/4}\text{Im}\Omega)^{[k]} (1+O(\epsilon^{k+1})),
\]
we need to adjust $\omega^{(k-1)}$ by the pullback of a 2-form on $S$
\[
\mathcal{L}_2(H^{(k)})=
\omega_S\left( \tilde{g}^{ij}\frac{\partial H}{\partial y_i} \cdot \frac{\partial H^{(k)}}{\partial y_j}- h^{-1} [d\vartheta]\cdot H^{(k)} \right).
\]
This provides the mechanism that will ultimately fix the ambiguity of $\omega^{(k-1)}$. This operator $\mathcal{L}_2$ has the pleasant interpretation of being the first variation at $H$ of the mapping
\[
\tilde{H}\mapsto \tilde{h}^{-1/2}\sqrt{ \det(\tilde{g}_{ij})} dy_1\wedge dy_2, \quad \tilde{g}_{ij}= \frac{\partial \tilde{H} }{\partial y_i}\cdot \frac{\partial \tilde{H} }{\partial y_j},\quad \tilde{h}= 2[d\vartheta]\cdot \tilde{H}.
\]
Combining the above, the response to $H^{(k)}$ of the $H^2(K3)$-valued 2-form $[\gamma_k]$ is
\[
\mathcal{L}_1 (H^{(k)}) + [d\vartheta] \wedge \mathcal{L}_2(H^{(k)}),
\]
which by comparison with section \ref{Variationalweightedmaximal} is identified as $\mathcal{L}(H^{(k)})$, where $\mathcal{L}$ is the variation of the operator $\delta \mathcal{A}_w$, or equivalently the Hessian operator of the weighted area functional. Our initial problem in this section is now rephrased as solving for $H^{(k)}$ so that $\mathcal{L}(H^{(k)})$ cancels the ASD part of the initial error $[\gamma_k]$, which defines a normal vector field to the image of $H:S\to H^2(K3)$. But by Lemma \ref{secondvariationnegative} the Hessian operator $\mathcal{L}$ is negative definite on normal vector fields with zero boundary data, so $H^{(k)}$ can always be solved uniquely.

%Consider the map
%\[
%\tilde{H} \mapsto \frac{\partial }{\partial y_j} ( \tilde{g}^{ij}\sqrt{\det \tilde{g}} \frac{\partial \tilde{H}}{\partial y_i}) dy_2\wedge dy_1, \quad \tilde{g}_{ij}= \frac{\partial \tilde{H} }{\partial y_i}\cdot \frac{\partial \tilde{H} }{\partial y_j},
%\]
%which is proportional to the mean curvature of the image of $\tilde{H}:S\to H^2(K3)$ up to an area form. In fact, the first variation of the area function $\mathcal{A}$ at $\tilde{H}$ (\cf Remark \ref{areafunctional}) is
%\[
%f \mapsto \delta\mathcal{A}|_{\tilde{H}}(f) =\int_S f \frac{\partial }{\partial y_j} ( \tilde{g}^{ij}\sqrt{\det \tilde{g}} \frac{\partial \tilde{H}}{\partial y_i}) dy_2\wedge dy_1, \quad \tilde{g}_{ij}= \frac{\partial \tilde{H} }{\partial y_i}\cdot \frac{\partial \tilde{H} }{\partial y_j}.
%\]

We now reexamine the total volume normalisation on $\int h^{[k]}(\omega^{[k]})^3$. In the above procedure, the change in the total volume corresponds to the change in the weighted area $\mathcal{A}_w$, which is zero to leading order because $H$ is a critical point. This ensures the total volume normalisation is preserved.

Now that we have solved for $H^{(k)}$, so as to be able to gain the condition $[\gamma_k]=0$, we can tie up a few loose ends. Notice $\omega^{(k-1)}$ is now fixed.  Then we can follow Remark 
\ref{CYmonopolebetter} to modify $\theta^{(k)}$ by a higher order in $\epsilon$ term to achieve
\[
d\vartheta^{[k]}\wedge (\omega^{[k]})^2=O(\epsilon^{k+2}).
\]
Also, while the steps taken in this section damage the gauge fixing condition on $(h^{1/4}\text{Im}\Omega)^{[k]}\wedge (\omega_1dy_1+\omega_2dy_2)
$, we can repeat some parts in section \ref{DeformationSU3II} to restore the gauge condition. Since this only involve exact 2-forms, it has no effect on the class $[\gamma_k]$. Repeating some steps in section \ref{DeformationSU3I} for one order higher in $\epsilon$, we can ensure $\gamma_k$ lies in the filtration with at least one $dy_i$ term modulo $O(\epsilon^2)$.

\subsection{Deformation of the $SU(3)$-structure IV: linear algebraic constraints}

We now start to modify $\omega^{[k]}$ to improve the $SU(3)$-structure condition. Modulo $O(\epsilon^{k+3})$, at this stage
\[
\omega^{[k-1]}\wedge (h^{1/4}\text{Im}\Omega)^{[k]}= \epsilon^{k+2} dy_1\wedge dy_2 \wedge \text{closed 3-form on K3}
\]
Write the closed 3-form as $df_1\wedge \omega_1+df_2\wedge \omega_2+ df_3\wedge \omega_y$. The $df_3$ term can be removed by adjusting $\rho^{(k)}$ by $d(f_3\omega_S)$. Now the freedom to adjust $\omega^{(k)}$ is $d(ady_1+bdy_2)$. Choosing the functions $a, b$ depending linearly on $f_1, f_2$, we can also remove the $df_1, df_2$ terms. After this adjustment,
\[
\omega^{[k]}\wedge \text{Im}\Omega^{[k]}=O(\epsilon^{k+3}).
\]
The $H^2(K3)$-valued 2-form $[\gamma_k]$ is not affected in this step.

Recall the integrability condition (\ref{integrabilityCYmonopole}) gives $\omega_1\wedge \gamma_k=\omega_2\wedge \gamma_k=0$. There is one extra piece of integrability we have not used: recall (\cf Remark \ref{CYmonopolebetter})
\[
d\vartheta^{[k]}\wedge (\omega^{[k]})^2=O(\epsilon^{k+2}), \quad d\omega^{[k]}=0, 
\]
hence
\[
\gamma_k\wedge \omega_y= \gamma_k\wedge \epsilon^{-1}\omega^{[k]}= \epsilon^{-k-2}\{ \epsilon d\vartheta^{[k]}\wedge (\omega^{[k]})^2+ d((h^{[k]})^{3/4}\text{Re}\Omega^{[k]}\wedge \omega^{[k]})
\}= O(\epsilon)
\]
Namely up to $O(\epsilon)$ error $\gamma_k$ defines a family of ASD harmonic forms on the K3 fibres. But the ASD component of $[\gamma_k]$ has been removed in section \ref{maximalsubmanifoldsystem}, so $\gamma_k=0$, and we have completed the induction.

\section{Global aspects}

We now discuss the phenomena that arise when trying to compactify the iterated fibration structure. These involve both topological and metric aspects. In particular, since any Killing vector field on a compact Ricci flat manifold must be parallel, we need to display mechanisms to break the circle symmetry in order to make nontrivial compact examples possible. The emphasis here is on the geometric picture, without the more serious analytical work.

\subsection{Lefschetz fibration}

Lefschetz fibration is a phenomenon which has relatively little to do with the small circle in $M$, but instead happens to the 6-manifold $M/S^1$. The motivation is that in a global setting, excess $(-2)$-classes are quite inevitable.

Consider first the simpler problem: let $X$ be a Calabi-Yau 3-fold with a holomophic K3 fibration $\pi: X\to\mathbb{P}^1$, where all critical points of $\pi$ have the local complex analytic description as $\pi=z_1^2+z_2^2+z_3^2$ in local coordinates, known as Lefschetz fibrations. Given a K\"ahler class $[\omega_X]$ on $X$, and a K\"ahler class $[\omega_{\mathbb{P}^1}]$ on $\mathbb{P}^1$, the problem is to describe the limiting behaviour of the Calabi-Yau metric $\omega_{CY,\epsilon}$ in the class $[\epsilon \omega_X+\omega_{\mathbb{P}^1}]$ as $\epsilon\to 0$. This can be viewed as a special case, when $M=X\times S^1$ decouples the small circle factor.

This problem has been satisfactorily analyzed in \cite{LiCY}\cite{LiCY2}\cite{LiCY3}.
There are two general lessons:
\begin{itemize}
\item (Multiscaledness) The metrics exhibit different characteristic behaviours at different length scales.

\item
(Universality) The microscopic behaviour of the metric depends essentially only on the local geometry, not on the global information of the Calabi-Yau 3-fold or the Lefschetz fibration.
\end{itemize}

 Globally, the Calabi-Yau metrics converge in Gromov-Hausdorff sense to a singular metric on $\mathbb{P}^1$ which can be explicitly written down via period integrals. The characteristic length scale of the base is $O(1)$. Around  any fixed smooth $K3$ fibre,  the rescaled Calabi-Yau metrics $\epsilon^{-1}\omega_{CY,\epsilon}$ converge smoothly to $K3\times \C$ with the product metric, where the K3 factor carries the unique Calabi-Yau metric in class $[\omega_X]$. Around a singular K3 fibre (which has $\C^2/\Z_2$ orbifold singularity by the Lefschetz condition), the convergence to $K3\times \C$ happens only in a Gromov-Hausdorff sense; here the K3 factor carries the orbifold Calabi-Yau metric. The convergence cannot be smooth because $K3\times \C$ has $\C^2/\Z_2\times \C$ local singularity. The characteristic length scale of the K3 fibre is $O(\epsilon^{1/2})$.

For our current purpose the most interesting aspect is what happens around the critical points of the fibration. The answer involves an exotic complete Calabi-Yau metric $\omega_{\C^3}$ on $\C^3$, whose tangent cone at infinity is $\C^2/\Z_2\times \C$ with the flat orbifold metric. In particular the volume of geodesic balls in $\omega_{\C^3}$ is of order $\sim r^6$, known as maximal volume growth, but $\omega_{\C^3}$ is not the flat Euclidean metric. This exotic metric is constructed by writing down an approximate metric ansatz near the spatial infinity, and using a noncompact version of the Calabi conjecture to perturb it into a Calabi-Yau metric. It is proved in \cite{LiCY3} that after suitable rescaling by a magnification factor  $O(\epsilon^{-4/3})$, the collapsing Calabi-Yau metrics $\epsilon^{-4/3}\omega_{CY,\epsilon}$ around the critical point converge to $\omega_{\C^3}$ as  $\epsilon\to 0$. In particular, the characteristic length scale of the bubble around the critical point is $O(\epsilon^{2/3})$, much smaller compared to the characteristic length scale $O(\epsilon^{1/2})$ of the K3 fibres. The local metric behaviour is captured by the canonical object $\omega_{\C^3}$, independent of the source of the global Calabi-Yau 3-fold.

Back to the 7-dimensional problem where $M/S^1$ around a particular K3 fibre looks approximately like a Lefschetz K3 fibration, we discuss the heuristic reason why the presence of the extra small cicle does not significantly affect the geometry near the critical point, as long as $[d\vartheta]$ is orthogonal to the $(-2)$-class of the vanishing sphere. The effect of this extra $S^1$ comes from its curvature $d\vartheta$. Around the singular K3 fibre, from our ansatz construction in section \ref{Slowcollapsing}, the leading order behaviour of $d\vartheta$ is the harmonic 2-form on the singular K3 in the appropriate class. Since $[d\vartheta]$ is orthogonal to the $(-2)$-class, there is no cohomological incentive for the curvature to concentrate significantly near the critical point. The characteristic curvature scale is $O(1)$. Now when we apply our magnifying glass to see the geometry of the exotic $\C^3$, the curvature effect of $d\vartheta$ is washed away by the scaling factor, \ie microscopically the circle bundle behaves as if it is flat. A further piece of evidence is that $h$ is to leading order constant on fibres. Thus we expect the local geometry is approximately a product of the exotic metric on $\C^3$ with the small $S^1$ factor. In particular, this does not provide a mechanism to break the $S^1$-symmetry.

%..................

%The case where $[d\vartheta]$ is not orthogonal to the vanishing cycle is more speculative. Notice the characteristic scale $O(\epsilon^{2/3})$ of the exotic $\C^3$ bubble is much smaller compared to the collapsing circle length $O(\epsilon)$. Once we rescale the bubble region to length scale $O(1)$, the circle length is $O(\epsilon^{1/3})$, suggesting there may be a Foscolo-Haskins-Nordstr\"om style construction of a collapsing family of complete $G_2$-manifold with circle fibration over $\C^3$, whose adiabatic limit is the Calabi-Yau metric $(\C^3, \omega_{\C^3})$. Unlike 

The monodromy of the Lefschetz fibration is controlled by the Picard-Lefschetz formula. This will require a slight modification to our local discussion of polarized positive sections; instead of taking value in $H^2(K3)$, the section $H$ really takes value in an affine bundle with fibres isomorphic to $H^2(K3)$ (\cf section \ref{Globaltopology} below).

\subsection{Wall crossing}\label{wallcrossing}

\textbf{Wall crossing} refers here to the following phenomenon. Let $\sigma\in H^2(K3)$ be a fixed class orthogonal to $[\omega_y]$. We propose a mechanism for $S$ to be locally divided into two components by a curve $l\subset S$ satisfying $\sigma\cdot H=\text{const}$, such that the first Chern class $\frac{1}{2\pi}[d\vartheta]$
jumps by $\sigma$ when we cross $l$. The mechanism involves allowing the $S^1$-bundle to degenerate along a 3-dimensional discriminant locus fibred over $l$.

\begin{rmk}
Curiously, the condition $\sigma\cdot H=\text{const}$ encodes special Lagrangians in the fast circle collapsing setting (\cf section \ref{fastcirclecollapsing}) where $M/S^1$ is approximately Calabi-Yau. In the fine tuned circle collapsing setting we are studying $M/S^1$ is not close to being Calabi-Yau, so such an interpretation is not available, but the intuition is still useful. %Morever, if we insist on the normalisation $h=2H\cdot [d\vartheta]$ whenever the fine tuned collapsing adiabatic picture applies, then the jumping of $[d\vartheta]$ is consistent with the continuity of $h$ across the wall only if $\sigma\cdot H=0$. 
\end{rmk}

Let $\tau$ be a coordinate along $l$. Our interest is in the behaviour of the $G_2$-structure transverse to $l$. Since the characteristic length scale of the transition (which turns out to be $O(\epsilon)$) is much smaller than the characteristic scale of $l$ which is $O(1)$, it is reasonable to suppose that the geometry is locally approximately independent of $\tau$. A natural suggestion is that the transverse behaviour is modelled on the dimensionally reduced case of Calabi-Yau 3-folds with $S^1$-symmetry, sketched in section \ref{DimreductiontoCY}. What we will now describe is essentially the same as \cite[chapter 4]{SunZhang}, in a slightly different normalisation convention, using the rescaled notation in section \ref{DimreductiontoCY}.

Since $\sigma\cdot dH=0$ along $l$ and $\sigma\cdot [\omega_y]=0$, and noticing how the conversion formula (\ref{conversionformularescaled}) mixes up the hyperk\"ahler structure on K3 surface $D$, along $l$ we have
\[
[\text{Im}\Omega_D]\cdot \sigma=0, \quad [\text{Re}\Omega_D]\cdot \sigma=0.
\]
We assume we can consistently find smooth holomorphic curves $C$ in the class $\sigma\in H^2(K3)$ with respect to the complex structure defined by $\Omega_D$; this can be sometimes guaranteed by cohomological criterions, for example in the case of $(-2)$-classes. From the 7-dimensional perspective, the discriminant locus of the $S^1$-fibration will be the union of these holomorphic curves along $l$. From the dimensional reduction viewpoint, the $\tau$ variable will be dropped, and we think of a Calabi-Yau 3-fold with $S^1$-action, whose fixed points lie over $C$ inside $D\times \{0\}\subset D\times \R_\mu$.

The Calabi-Yau requirement leads to
\[
\begin{cases}
d\vartheta= -\partial_\mu \tilde{\omega}- \epsilon^{-1}d_D^c h \wedge d\mu,
\\
(\partial^2_\mu \tilde{\omega}+ \epsilon^{-1} d_Dd_D^c h)\wedge d\mu= -d(d\vartheta)= -2\pi \delta_C
\\
\tilde{\omega}^2= \frac{1}{2} h\Omega_D\wedge \overline{\Omega}_D.
\end{cases}
\]
This is the same as in section \ref{DimreductiontoCY} except for the codimension 3 distributional current $\delta_C$ along $C\subset D\times \R_\mu$, which arises from the requirement that the integral of $d\vartheta$ on a 2-sphere linking $C$ in $D\times \R_\mu$ should have total integral $2\pi$ to maintain the smoothness of the Calabi-Yau 3-fold. As $\mu$ increases to cross $0\in \R$, the first Chern class $\frac{1}{2\pi}[d\vartheta]$ increases by $\sigma\in H^2(K3)$.

The system so far is highly nonlinear. Fortunately, since the $S^1$-fibres are near the collapsing limit, perturbation techniques are available. We impose the ansatz
\[
\begin{cases}
h=h_0+ \epsilon \mathfrak{h},\quad h_0=const,
\\
\tilde{\omega}=\tilde{\omega}_D+ \epsilon \psi, \quad \tilde{\omega}_D^2= \frac{h_0}{2}\Omega_D\wedge \overline{\Omega}_D.
\end{cases}
\]
This means unless we are very close to the discriminant locus, then $h$ should be locally approximately constant, and $\tilde{\omega}$ should approximately agree with the Calabi-Yau metric on $D$, in line with the basic assumptions in section \ref{Slowcollapsing}. This motivates the linearization of the above system:
\[
\begin{cases}
\tilde{\omega}_D\wedge \psi= \frac{1}{4} \mathfrak{h} \Omega_D\wedge \overline{\Omega}_D, \quad \ie \quad \mathfrak{h}=  h_0 \Tr_{\tilde{\omega}_D} \psi,
\\
\epsilon \partial_\mu^2 \psi+  d_Dd_D^c \mathfrak{h}= -2\pi \delta_C,
\end{cases}
\]
hence the Laplace equation
\[
(\epsilon \partial_\mu^2 -  h_0 d_Dd_D^* )\psi= -2\pi \delta_C.
\]
The leading singularity along $C$ is
\[
\begin{cases}
\psi\sim \frac{\sqrt{-1}}{4r} \eta\wedge \bar{\eta},
\\
\mathfrak{h}\sim \frac{1}{2r},
\\
r\sim \sqrt{  \epsilon h_0^{-1}\text{dist}_{\tilde{\omega}_D}(\cdot, C)^2+ |\mu|^2 },
\end{cases}
\]
where $\eta$ denotes a $(1,0)$-form on $D$ normal to $C$, with $\Tr_{\tilde{\omega}_D} \frac{h_0\sqrt{-1}}{2}\eta\wedge \bar{\eta}=1$.

To give the geometric interpretation, first observe that the correction effect is significant only when $r\lesssim \epsilon$, corresponding to length scale $O(\epsilon)$, which is comparable to the average length of $S^1$-fibres and much smaller compared to the length scale $O(\epsilon^{1/2})$ of the K3 surfaces.

The correction is dominant in the small neighbourhood around $C$, and its effect is to make the geometry transverse to $C$ look approximately like Taub-NUT metrics. In fact, we have a metric asymptote near $C$
\[
g'\sim \epsilon \tilde{g}_D + \frac{\epsilon^2}{2r}\eta\otimes \eta+ (h_0+ \frac{\epsilon}{2r}) (d\mu)^2+ (h_0+ \frac{\epsilon}{2r}) \epsilon^2\vartheta^2,
\]
and the component of the K3 metric $\tilde{g}_D$ normal to $C$ is $h_0\eta\otimes \eta$ by construction. We recognize the metric component transverse to $C$
\[
(h_0+ \frac{\epsilon}{2r}) \{(d\mu)^2+ \epsilon  \eta\otimes \eta \}+ (h_0+ \frac{\epsilon}{2r}) \epsilon^2\vartheta^2
\]
is up to scaling just Taub-NUT metric in disguise. In particular, while one may a priori worry about the validity of linear approximations when the correction is large, it turns out that after adding the linear correction, the structure we obtain is close to being Calabi-Yau. This ansatz is used in \cite{SunZhang} to describe the neck region for certain degenerations of Calabi-Yau metrics. Our proposal here is that the same picture occurs for iterated collapsing 
of $G_2$-manifolds.

In this mechanism, the $S^1$-bundle becomes degenerate over some discriminant locus, but the global $S^1$-symmetry is still not broken; the singularity just corresponds to the fixed points of the $S^1$-action.

\begin{rmk}
Under additional hypotheses, we can incorporate wall crossing phenomenon with two transversely intersecting walls $l_1, l_2$. The intersection  $l_1\cap l_2$ corresponds to a K3 fibre containing holomorphic curves $C_1,C_2$ in the class $\sigma_1$, $\sigma_2$ respectively. One needs to be careful that the two complex structures on the K3 surface corresponding to $l_1, l_2$ are different. We now assume that $C_1$ and $C_2$ do not intersect; this is in general stronger than the homological condition $[\sigma_1]\cdot [\sigma_2]=0$. We can then expect that the wall crossing across $l_1, l_2$ behave independently. The reason is that the characteristic length scale of the wall crossing in $O(\epsilon)$, which is much less than the characteristic scale of the K3 fibre $O(\epsilon^{1/2})$, so the linear correction induced by the distributional terms supported on $C_1, C_2$ have neglegible influence on each other.

If $C_1$ and $C_2$ are intersecting, then the local situation is supposedly modelled on a collapsing $G_2$-manifold with $S^1$-fibration over Euclidean $\C^3$  whose discriminant locus is the union of two intersecting special Lagrangian 3-planes. This conjectural metric model is a folklore prediction of Atiyah and Witten \cite{AtiyahWitten} and its unsolved existence problem is a major difficulty in the Foscolo-Haskins-Nordstr\"om picture \cite{HaskinsS1}.
\end{rmk}

\subsection{ALF fibration}\label{ALFfibration}

Instead of the Taub-NUT metric, we can modify the distributional equation to
\[
(\partial^2_\mu \tilde{\omega}+ \epsilon^{-1} d_Dd_D^c h)\wedge d\mu= -d(d\vartheta)= -2\pi (k+1) \delta_C, 
\]
so the solution is modified into
\[
\begin{cases}
\psi\sim \frac{\sqrt{-1}(k+1)}{4r} \eta\wedge \bar{\eta},
\\
\mathfrak{h}\sim \frac{k+1}{2r},
\\
r\sim \sqrt{  \epsilon h_0^{-1}\text{dist}_{\tilde{\omega}_D}(\cdot, C)^2+ |\mu|^2 },
\end{cases}
\]
The transverse structure to $C$ matches with the asymptote of the $A_k$ type ALF gravitational instantons. Since ALF gravitational instantons in general have nontrivial moduli, one expects internal degrees of freedom supported on the discriminant locus to arise when we attempt to glue in an ALF fibration, similar to the harmonic 1-forms appearing in Joyce and Karigiannis \cite{JoyceKarigiannis}. %These discussions are very analogous to what one expects about the Foscolo-Haskins-Nordstr\"om setting in the presence of special Lagrangians (\cf section \ref{fastcirclecollapsing}). 
 We will be brief on issues already discussed extensively for ALE fibrations in the physics literature \cite{Pantev}\cite{Barbosa}\cite{Hubner1}\cite{Hubner}.

The basic situation is that we are given a 3-manifold with a given Riemannian metric, and we wish to construct an approximate $G_2$-metric on a fibration over the 3-manifold with fibres diffeomorphic to $A_k$-type ALF space, whose asymptotic behaviour near the fibrewise infinity is prescribed, and the metric restricted to each fibre is approximately isomorphic to some choice of $A_k$-type ALF space with characteristic length scale $O(\epsilon)$ much smaller compared to the length scale of the 3-manifold. Morever, for simplicity we assume that the asymptotic circle length of the ALF fibres is approximately constant over the region of the 3-manifold under consideration, so that the variation of the ALF structures can be captured completely by period integrals.
In our case of interest the 3-manifold is the  discriminant locus fibred over $l$ with fibre $C$ of length scale $O(\epsilon^{1/2})$, and the asymptotic circle length is indeed almost constant within length $O(\epsilon^{1/2})$, although not necessarily so over length scale $O(1)$.

Taking an orthonormal frame $e_1, e_2, e_3$ on the 3-manifold, we can write the ansatz for the $G_2$-structure as
\[
\phi\sim \epsilon^2(\omega_1^{ALF} \wedge e_1^*+ \omega_2^{ALF} \wedge e_2^*+\omega_3^{ALF} \wedge e_3^*)-e_1^*\wedge e_2^*\wedge e_3^*,
\]
where $\omega_i^{ALF}$ are fibrewise hyperk\"ahler triples of $A_k$-type ALF metrics with prescribed asymptote at fibre infinity. (Strictly speaking, to make sense of $\omega_i^{ALF}$ on the total space, we need the data of a horizontal distribution; its determination is similar to  Donaldson's proposal of adiabatic K3 fibration \cite{Donaldson}).

Now working in geodesic coordinates around a given point, in the adiabatic limit $\epsilon\to 0$, the requirement for $d\phi=0$ leads to that the $H^2(ALF)$-valued 1-form $ \lambda=\sum_i[\omega_i]e_i^*$ is closed, and the requirement for $d*_\phi \phi=0$ leads to $d*\lambda=0$ on the 3-manifold. To summarize, as long as the adiabatic approximation holds and the fibre asymptotic behaviour stays approximately constant, then the fibration is governed by an $H^2(ALF)$-valued harmonic 1-form locally on the 3-manifold.

%More concretely, we can evaluate $\lambda$ on the 2-cycles of the ALF space, to get a collection of local harmonic 1-forms from the period integrals. The generic zeros of these harmonic 1-forms are believed to give rise to codimension 7 singularities which are not well understood mathematically.

Thinking more globally on the 3-manifold, $H^2(ALF)$ becomes a local system, alternatively viewed as a bundle with a flat connection, and $\lambda$ is a harmonic 1-form valued in this bundle. Physicists view such data as an Abelian solution to the BPS equation \cite[section 2.3]{Hubner}, also related to spectral covers \cite[section 2.4]{Pantev}.  In our case of interest, the 3-manifold is itself fibred over $l$ with very small fibres $C$, and to leading order we get bundle valued harmonic 1-forms over the Riemann surfaces $C$.

Some sources of difficulty are:
\begin{itemize}
\item
In elementary examples of the 3-manifolds, typically there are no nontrivial harmonic 1-forms. It has been suggested that we should relax the harmonic 1-form condition along some collection of knots inside the 3-manifold, and build in nontrivial monodromy for the local system around these knots. The geometric situation seems to correspond to the Kovalev-Lefschetz fibrations discussed by Donaldson \cite{Donaldson}, adapted to the ALF setting. It is also suggested that we may consider branched covers over the 3-manifold, so the harmonic 1-forms may be multivalued \cite[section 9]{Barbosa}. Geometrically, the small ALF instantons are arranged in several sheets.

\item
On a generic 3-manifold, it is commonly expected that when the $H^2(ALF)$-valued 1-form evaluated on a (-2)-curve has a zero, the $G_2$-manifold should develop an isolated singularity modelled on the Bryant-Salamon cone over $\mathbb{CP}^3$, producing chiral matter in the physics literature, but the metric local model seems difficult to construct rigorously.

The fact that our 3-manifold is itself highly collapsed somewhat alters this picture. Thinking in terms of the dimensional reduction to Calabi-Yau 3-fold, we suggest the following picture may be relevant for the simplest case, and is accessible within current technology. Complex geometrically, consider the quadric cone
\[
\{ z_1^2+z_2^2+z_3^2+z_4^2=0    \}\subset \C^4,
\]
admitting a fibration 
\[
\{ z_1^2+z_2^2+z_3^2+z_4^2=0    \} \xrightarrow{z_4} \C,
\]
with fibres being affine quadric surfaces. These agree with the underlying space of $A_1$-type ALF gravitational instantons, so we can hope to construct an ansatz K\"ahler metric asymptotic to an ALF fibration near infinity. One expects that by a noncompact version of the Calabi conjecture, the ansatz metric can be corrected into a Calabi-Yau metric. The result of Hein and Sun \cite{HeinSun} then suggests that the Calabi-Yau metric is asymptotic to the Stenzel cone at the origin.  Thus this Calabi-Yau metric will furnish the transition between the asymptotic ALF fibration picture and the local Stenzel cone.

\item
We also wish to know the long range behaviour over $l$. One expects that the variation of the asymptotic circle length, and the moduli of Abelian solutions of the BPS equation to play a role. Morever, assuming the above Calabi-Yau model metric can be constructed, then na\"ively we have a codimension 6 singularity along $l$ transversely modelled on the Stenzel cone. Gao Chen \cite{Chen} has studied the deformation theory of such singularities, and found an infinite dimensional obstruction. The  expectation is that the codimension 6 singularity breaks up into isolated codimension 7 singularities modelled on the cone over $S^3\times S^3$.

\end{itemize}

Of course, these rather formidable difficulties only arise if $k\geq 1$. The $k=0$ case with the Taub-NUT metric does not have internal moduli, so does not exhibit these difficult yet rich phenomena.

\subsection{Orientifold and boundary behaviour}\label{Orientifoldboundary}

We now discuss an analogue of the wall crossing phenomenon in section \ref{wallcrossing} involving orbifold behaviour, which will give rise to boundaries in $S$.

Since the wall crossing phenomenon has characteristic length scale $O(\epsilon)$, the variation of the hyperk\"ahler structure on K3 fibres  is negligible. We will assume the existence of a $\Z_2$-symmetry acting as a \textbf{non-symplectic involution} on the K3 surface
\[
\rho^* \tilde{\omega}_D=\tilde{\omega}_D, \quad \rho^*\Omega_D=-\Omega_D.
\]
There is a classification for K3 surfaces with non-symplectic involutions in terms of K3 lattice theory (\cf \cite[section 3]{KovalevLee} for a short survey).
In particular, the fixed locus of $\rho$
is a disjoint union of smooth holomorphic curves $C_i$ in $D$. This $\rho$ extends to an action
\[
\rho^* \tilde{\omega}_D=\tilde{\omega}_D, \quad \rho^*h=h,\quad \rho^*\Omega_D=-\Omega_D, \quad \rho^*\vartheta=-\vartheta, \quad \rho^*\tau=\tau, \quad \rho^*\mu=-\mu.
\]
In terms of the Apostolov-Salamon $SU(3)$-structure,
\[
\rho^*\omega=-\omega, \quad \rho^*\Omega=\overline{\Omega}.
\]
This behaves like an antiholomorphic involution.
Similar to section \ref{ALFfibration}, we modify the distributional equation of section \ref{wallcrossing} to
\[
(\partial^2_\mu \tilde{\omega}+ \epsilon^{-1} d_Dd_D^c h)\wedge d\mu= -d(d\vartheta)= -2\pi \sum_i(2m_i-4) \delta_{C_i}, 
\]
so the linear correction term is modified into
\[
\begin{cases}
\psi\sim \frac{\sqrt{-1}(2m_i-4)}{4r} \eta\wedge \bar{\eta},
\\
\mathfrak{h}\sim \frac{2m_i-4}{2r},
\\
r\sim \sqrt{  \epsilon h_0^{-1}\text{dist}_{\tilde{\omega}_D}(\cdot, C_i)^2+ |\mu|^2 }.
\end{cases}
\]

The $\Z_2$ symmetry constrains $[d\vartheta]$. To see this, notice wall crossing predicts that as $\mu$ increases across zero, $\frac{1}{2\pi}[d\vartheta]$ increases by $\sum_i(2m_i-4)[C_i]\in H^2(K3)$. Compatibility with $\rho^*\vartheta=-\vartheta$ then implies
\begin{equation}\label{firstChernclassnearboundary}
\frac{1}{2\pi}[d\vartheta]=
\begin{cases}
\sum (2-m_i)[C_i], \quad \mu<0,
\\
\sum (m_i-2)[C_i],\quad \mu>0.
\end{cases}
\end{equation}
Morever, we have the cohomological constraints
\begin{equation}\label{boundarycondition}
\rho^*[\omega_y]=-[\omega_y],\quad \rho^*\frac{\partial H}{\partial \tau}=- \frac{\partial H}{\partial \tau}, \quad \rho^*\frac{\partial H}{\partial \mu}= \frac{\partial H}{\partial \mu},
\end{equation}
where $\frac{\partial}{\partial \tau}$ is tangent to $l$ and $\frac{\partial}{\partial \mu}$ is normal to $l$ with respect to the induced metric $g$ on $S$.

 We now take the $\Z_2$ quotient of the construction. The structure transverse to $C_i$ now matches precisely with the asymptote at infinity of $D_{m_i}$ type ALF gravitational instantons. This suggests that we should glue in a \textbf{family of $D_{m_i}$-type ALF gravitational instantons} fibred over $C_i$ in the Calabi-Yau case (or the 3-manifold swept out by $C_i$ over $l$ in the $G_2$-case). For $m=0$, the $D_0$-type ALF gravitational instanton is the Atiyah-Hitchin manifold, which has no deformation except for scaling, so we expect that the matching condition essentially determines this ALF fibration up to small error. For $m>0$, the $D_m$ instantons have internal moduli, so we expect extra data to be necessary in specifying the ALF fibration, in the same spirit as in section \ref{ALFfibration}.

\begin{rmk}
We saw in section \ref{ALFfibration} that zeros of harmonic 1-forms typically lead to difficult singularity problems. There is one case where we can hope the $D$-type is better behaved: the $D_1$-type corresponds to deformations of the double cover of the Atiyah-Hitchin manifold. In particular, the zero harmonic 1-form corresponds to taking a fibration by the the double cover of Atiyah-Hitchin manifold over the 3-dimensional discriminant locus, without creating any singularity. 	
\end{rmk}

An important conceptual point is that unlike the $A_k$-type gravitational instantons, the $D_m$-type can \textbf{break the global $S^1$-symmetry}, as must be the case for nontrivial compact $G_2$-manifolds.

Another conceptual feature is the effect of $\Z_2$-quotient. While the quotient results in codimension 4 orbifold singularity, this is resolved when we glue in the ALF fibration, so $M$ is still smooth. (In some cases such as $m_i=2$, we can also choose to not glue in the ALF fibration, which will result in codimension 4 orbifold singularity on $M$). Nevertheless, the $\Z_2$-quotient identifies $\mu$ with $-\mu$, so instead of two components of $S$ separated by $l$, there is only one component with boundary $l$. Even though we ultimately want to compactify $M$ into a closed manifold, we should allow $S$ to be a \textbf{manifold with boundary} instead.

\begin{rmk}
It may be asked what happens when two boundary curves $l$ intersect. One possible guess is to introduce corners to the Riemann surface with boundary, but the $G_2$-geometry seems unclear in general.
\end{rmk}

\subsection{Tian-Yau region}\label{TianYau}

There is yet another mechanism to break the global $S^1$-symmetry, again coming from the dimensional reduction to Calabi-Yau 3-folds. This plays a prominent role in \cite{SunZhang} to describe the end regions of certain degenerating Calabi-Yau metrics. More backgrounds can be found in \cite[section 3]{HSVZ}.

Let $Y$ be an $n$-dimensional Fano manifold, $D$ a smooth anticanonical divisor in $Y$, and denote $Z=Y\setminus D$. By adjuntion $D$ is itself a Calabi-Yau manifold. Fix a defining section $S$ of $D$, and view $S^{-1}$ as a holomorphic $n$-form $\Omega_Z$ on $Z$ with a simple pole along $D$, whose Poincar\'e residue defines a holomorphic volume form $\Omega_D$ on $D$. Under suitable normalisation
\[
\frac{n!}{2^n} \int_D \sqrt{-1}^{(n-1)^2} \Omega_D\wedge \overline{\Omega}_D= (2\pi c_1(K_Y^{-1}|_D)) ^{n-1}.
\]
By Yau's theorem, there is a unique Calabi-Yau metric $\omega_D$ in the class $2\pi c_1(K_Y^{-1}|_D)$. The form $-\sqrt{-1}\omega_D$ is the curvature form of a Hermitian metric $\norm{\cdot}$ on $K_Y^{-1}|_D$; we fix a smooth extension of the Hermitian metric to $Z$, preserving the positivity of the curvature.

The form
\[
\omega_Z= \frac{n}{n+1} \sqrt{-1}\partial \bar{\partial} (-\log \norm{S}^2)^{\frac{n+1}{n} }
\]
is K\"ahler on a neighbourhood of infinity of $Z$. By a noncompact version of the Calabi conjecture, one finds the \textbf{Tian-Yau metric} $\omega_{TY}=\omega_Z+\sqrt{-1}\partial \bar{\partial }\phi$ on $Z$ asymptotic to $\omega_Z$ at infinity, satisfying the Calabi-Yau condition
\[
\omega_{TY}^n= \frac{n!}{2^n} \sqrt{-1}^{n^2} \Omega_Z\wedge \overline{\Omega}_Z.
\]

The asymptotic geometry is identified as the \textbf{Calabi ansatz}. Complex geometrically, the neighbourhood of $D$ inside $Y$ is well approximated by the normal bundle of $D$, namely the total space of $K_Y^{-1}|_D\to D$. The holomorphic volume form $\Omega_Z$ is approximately $d\log \xi\wedge \Omega_D$, where $\xi$ denotes a local holomorphic variable on the fibres of $K_Y^{-1}|_D\to D$. The asymptotic metric $\omega_Z$ is approximated by
\[
\omega_{Calabi}=
 \frac{n}{n+1} \sqrt{-1}\partial \bar{\partial} (-\log \norm{\xi}^2)^{\frac{n+1}{n} }
\]
using the Hermitian metric on $K_Y^{-1}|_D$. This setup is invariant under the $S^1$-action on the fibres of the line bundle. The moment map is
\[
\mu= ( -\log \norm{\xi}^2  )^{1/n},
\]
and the Calabi ansatz can be written in the framework of section \ref{DimreductiontoCY} as 
\[
\begin{cases}
\omega_{Calabi}= \vartheta\wedge d\mu+ \mu \omega_D,
\\
\Omega_{Calabi}= d\log \xi\wedge \Omega_D= - (  \frac{n}{2}(-\log \norm{\xi})^{(n-1)/n}  d\mu- \sqrt{-1}\vartheta) \wedge\Omega_D,
\\
d\vartheta=-\omega_D,
\\
h_{Calabi}= \frac{n}{2}(-\log \norm{\xi})^{(n-1)/n }= \frac{n}{2}\mu^{n-1}.
\end{cases}
\]

For our purpose the case of interest is $n=3$ and $D=K3$. Then the above description agrees with Example \ref{Calabiansatz}, up to suitable scaling factors. The significance is that at least in the dimensionally reduced setting, up to rescaling the asymptote of the Tian-Yau metric matches the adiabatic ansatz with 
\[
H= [\text{Re}\Omega_D]\tau+ \frac{1}{2}[d\vartheta] \mu^2
\]
(compare the end of section \ref{DimreductiontoCY}), so that by gluing in the Tian-Yau metric we can desingularize the $\mu=0$ boundary locus of the adiabatic ansatz where $h=0$. The characteristic length scale of the Tian-Yau bubble region is $O(\epsilon^{2/3})$. Thus we can at least hope that the Tian-Yau regions contribute to another type of boundary for $S$. More evidence is needed to test this gluing proposal, since the weighted maximal submanifold equation may have poor regularity near the $h=0$ boundary, caused by the degeneracy of ellipticity.

\subsection{Global topology}\label{Globaltopology}

We now discuss some topology in order to set up a global version of the weighted maximal submanifold equation. The base $S$ will be a Riemann surface with boundary, with isolated interior points $S_{sing}$ over which the K3 fibres are supposed to develop Lefschetz singularities. The complement of these points in $S$ is denoted $S_{sm}$, where the K3 fibres will be smooth. The interior of $S$ can also contain a number of walls, which are unions of disjoint circles, not touching the boundary or $S_{sing}$. (These requirements are meant to be tentative, presumably relaxable if we have better understanding of local metric models.)
The precise location of the walls should not be fixed a priori, and behave instead as in a free boundary problem. The topological setup will come directly from abstracting the cohomological/lattice theoretic aspects of the Lefschetz fibration, wall crossing, and the orientifold boundary behaviour. It is  conceivable that other local mechanisms could be incorporated into this formalism.

First we consider the smooth K3 fibration $(M/S^1)_{sm}\to S_{sm}\subset S$, and seek cohomological descriptions of the adiabatic solution to the Apostolov-Salamon equation. The fibration induces an $H^2(K3,\Z)$ local system $\Gamma$ over the smooth locus $S_{sm}$. The symplectic form $\omega$ defines a constant section $[\omega_y]$ of $\Gamma_\R=\Gamma\otimes_\Z \R$.

On a local chart $U_\alpha$, 
the information of $h^{1/4}\text{Im}\Omega$ is encoded by a polarized positive section $H_\alpha: U_\alpha\to H^2(K3)$, determined up to constant. On the overlaps of two charts $U_\alpha$ and $U_\beta$, the difference $H_\alpha-H_\beta$ is locally constant. The collection
$
\{ H_\alpha- H_\beta   \}
$
defines a Cech cocycle in $H^1(S_{sm}, H^2(K3))$. More invariantly, the Leray spectral sequence gives
\[
H^3((M/S^1)_{sm} ) \simeq H^1( S_{sm}, \Gamma_\R )
\]
and the class of $h^{1/4}\text{Im}\Omega$ is mapped to a class in $H^1(S_{sm}, \Gamma_ \R)$. Via the cocycle, we can regard the collection $\{H_\alpha\}$ as defining a section $H$ of an affine bundle $\tilde{\Gamma}_\R$, whose associated vector bundle is $\Gamma_\R$.

The class $\frac{1}{2\pi}[d\vartheta]$ is viewed as a discontinuous section of $\Gamma$ with precise jumping behaviours across the walls.
The boundary value of $[d\vartheta]$ is prescribed in (\ref{firstChernclassnearboundary}). More precisely, we equip each boundary circle of $S$ with an involution $\rho^*$ acting on the fibres of $\Gamma|_{S^1}$, which is the cohomological version of non-symplectic involutions. This $\rho^*$ determines the class of the components $C_i$ of the fixed point locus of the non-symplectic involution. With additional choice of integers $m_i$ we prescribe $[d\vartheta]$ according to the formula (\ref{firstChernclassnearboundary}). On the complement of the walls, the class $[d\vartheta]$ is locally constant. Each (oriented) wall is associated with a class $\sigma\in \Gamma$ orthogonal to $[\omega_y]$, and $\frac{1}{2\pi}[d\vartheta]$ jumps by $\sigma$ across the wall.

In each local chart $H_\alpha$ is a polarized positive section, namely 
\[
H_\alpha\cdot [\omega_y]=0, \quad h=2H_\alpha\cdot [d\vartheta]+ \text{const}(\alpha)>0, \quad  \tilde{g}= \frac{\partial H_\alpha}{\partial y_i} \cdot \frac{\partial H_\alpha}{\partial y_j} dy_idy_j \text{ Riemannian}.
\]
The constants here arise from the affine ambiguity of $H_\alpha$.
The gradient of $H_\alpha$ should be continuous across the wall, and $h$ should be independent of the charts as it is related to the circle length. To be compatible, we must require along the wall
\[
H_\alpha \cdot \sigma=\text{const}(\alpha).
\]
The precise position of the walls are thus tied up with the unknown $H$, and must be solved alongside $H$ rather than prescribed a priori. Furthermore, over $S_{sm}$ there are no singular K3 fibres by assumption, so cohomologically we require there is no excess $(-2)$-class for $H$ over $S_{sm}$.

We now turn to the boundary of $S$. For simplicity we only consider orientifold type boundary described in section \ref{Orientifoldboundary}. The boundary condition is specified by (\ref{boundarycondition}), namely we constrain the tangential derivative of $H$ to the $-1$ eigenspace of $\rho^*$, and the normal derivative of $H$ to the $+1$ eigenspace of $\rho^*$, where the normal direction is with respect to the metric induced from immersion in $H^2(K3)$. In particular, in some preferred local chart $H$ restricted to the boundary lands in the $-1$ eigenspace.
These are a mixture of Dirichlet and Neumann boundary conditions of complementary dimensions.

Next we deal with the singular K3 fibres. We shall assume that the K3 fibration is Lefschetz. To incorporate the Picard-Lefschetz monodromy, we consider $S$ as an orbifold locally modelled on $\C/\Z_2=\C/\iota$ near $p\in S_{sing}$, and
 require that $\tilde{\Gamma}_\R$ is a flat orbifold affine bundle. The local sections of $\tilde{\Gamma}_\R$ in an open ball $U_\alpha=\tilde{U}_\alpha/\iota$ around $p$ are concretely written in some chart as maps $\tilde{H}_\alpha: \tilde{U}_\alpha\to H^2(K3)$ satisfying an equivariance condition
 \[
 \tilde{H}_\alpha(\iota(z) )= \tau_\alpha (\tilde{H}_\alpha(z)), \quad \tau_\alpha(v)=v+(\delta\cdot v)\delta , \quad z\in \tilde{U}_\alpha,
 \]
where $\delta$ is the $(-2)$-class of the vanishing sphere, orthogonal to $[\omega_y]$. The class $[d\vartheta]$ is required to be monodromy invariant. To incorporate Lefschetz fibrations into the notion of polarized positive sections, we require that around $p$, up to the ambiguity of an affine constant,
\[
\tilde{H}_\alpha(z)= v_1 \text{Re}(z^2)+ v_2 \text{Im}(z^2)+  \text{Im}( b z^3 )\delta +O(z^4),
\]
where $v_1, v_2\in H^2(K3)$ satisfy 
\[
v_i\perp [\omega_y], \quad v_i \perp \delta, \quad (v_i\cdot v_j) =\lambda \delta_{ij},\quad \lambda>0,
\]
and $b\in \C$ is nonzero, corresponding to the nondegeneracy of the Lefschetz singularity. The function $h$ is required to be positive and at least $C^1$.

The problem to solve the \textbf{global weighted maximal submanifold equation} means to find the configuration of the walls and boundaries, such that the PDE (\ref{weightedmaximalsubmanifold}) is satisfied on the complement of the walls and boundaries, the mixed boundary condition is satisfied on $\partial S$, the jumping behaviour across the walls and the local behaviour near the Lefschetz singularities are as prescribed, and there are no excess $(-2)$-classes so that no other singular K3 fibres can appear.

The topological consistency of our setup has an interesting consequence closely related to \textbf{charge conservation} (\cf section \ref{fastcirclecollapsing}). Recall that the walls carry $\Gamma$-valued sections $\sigma$ prescribing the jumping of $[d\vartheta]$, so define $\Gamma$-valued 1-clycles. Similarly with the boundary components of $S$, carrying the $\Gamma$-valued 1-cycles coming from (\ref{firstChernclassnearboundary}). We claim the sum of all these $\Gamma$-valued 1-cycles equals zero in homology.
This is because $[d\vartheta]$ is locally constant in each domain $S_i$ bounded by walls and boundaries, and when we take the boundary of $\sum_i [d\vartheta]S_i$, the wall contributions cancel in pairs to yield the desired linear relation.

%plan: local system, monodromy, setup global version of the maximal submanifold equation, discuss charge conservation, 

%\begin{rmk}
%If we wish to construct a holonomy $G_2$-metric, we need to impose the finite fundamental group on the 7-fold. This seems to suggest that $S$ is homeomorphic to a disk.
%\end{rmk}

\subsection{Variational viewpoint}

The local version of the weighted maximal submanifold equation has the interpretation as the critical point condition of a weighted area functional. We now fix all the global topological data, and compute formally the first variation of the weighted area functional
\[
\mathcal{A}_w(H)=  \int_S h^{1/2} \sqrt{\det \tilde{g} }dy_1\wedge dy_2, 
\]
 allowing for the deformation of the walls and boundaries subject to the various boundary conditions. The local discussion in section \ref{Variationalweightedmaximal} already shows that the weighted maximal submanifold (\ref{weightedmaximalsubmanifold})  holds for critical points, so the task here is to examine boundary terms.

In each chamber $S_i$ bounded by some walls and boundaries, given an arbitrary variation $f=\delta H$ which is a local section of $\Gamma_\R$, and assuming (\ref{weightedmaximalsubmanifold}), the contribution to $\delta \mathcal{A}_w$ is the boundary line integral
\[
\int_{\partial S_i} h^{1/2} \frac{\partial H}{\partial \nu}\cdot f dl,
\]
where $\frac{\partial H}{\partial \nu}$ denotes the normal derivative with respect to the induced metric, and $dl$ denotes the induced line element on $\partial S_i$. Notice the affine ambiguity of $H$ is eliminated in taking the derivative. The Lefschetz singularities occur over isolated points, and give no contribution to these integrals.

The boundaries of $S_i$ can be the walls or the boundaries of $S$. Since $f, h$ and the gradient of $H$ are required to be continuous across the walls, the wall contributions cancel in pairs. As for the boundary of $S$, our mixed boundary condition requires the normal derivative of $H$ to be in the $+1$ eigenspace of the cohomological involution $\rho^*$, and in some preferred chart the boundary value of $H$ lands in the $-1$ eigenspace. For the first variation to be compatible with the Dirichlet part of the boundary conditions, the boundary value of $f$ needs to land in the $-1$ eigenspace. But the two eigenspaces of $\rho^*$ are orthogonal, so $f$ is orthogonal to $\frac{\partial H}{\partial \nu}$ on the boundary of $S$, hence
\[
\sum \int_{\partial S_i} h^{1/2} \frac{\partial H}{\partial \nu}\cdot f dl=0.
\]
This shows the boundary conditions are compatible with the critical point interpretation.

\subsection{Open questions}

In the program to ultimately construct new compact examples of $G_2$-manifolds, there are 3 types of questions to be addressed:

\begin{itemize}
\item Find consistent topological data, involving the Riemann surface, the topological configuration of walls, boundaries and singularities, the affine bundle $\tilde{\Gamma}_\R$, the lattice theoretic data on the boundary and the walls, the polarization class $[\omega_y]$, and the first Chern class $[d\vartheta]$ in each chamber. These data should allow for the existence of a global polarized positive section $H$. This problem has a flavour analogous to K3 matching problems involved in twisted connected sum constructions. %In particular, it would be interesting to produce a priori restrictions on the possible configurations of boundaries and walls.

\item
Solve a free boundary type problem to find a solution $H$ to the global weighted maximal submanifold equation, along with the position of the walls. There should be no excess $(-2)$-classes, so the only singular K3 fibres are those nodal fibres of the Lefschetz fibration.
This step deals with the nonlinearity of the Apostolov-Salamon equation, and the variational interpretation is likely useful. One also needs to develop a good regularity theory for the weighted maximal submanifold equation; most of this difficulty comes from the walls and boundaries, where the jumping behaviour of $[d\vartheta]$ indicates that $H$ cannot be smooth.

\item
Perform a gluing construction to construct the $G_2$-metric, taking into account the various geometric ingredients. As discussed, this combines features of many known constructions, but there are still quite substantial challenges. For instance, one needs to develop a good deformation theory for the local $G_2$-metrics on the ALF fibrations. 

\end{itemize}

Some other natural questions include:

\begin{itemize}
\item Are these potential examples deformation equivalent to examples arising from the Foscolo-Haskins-Nordsr\"om style fast circle collapsing picture, by changing the cohomological class of the $G_2$ 3-form?

\begin{rmk}
Most of our geometric ingredients have fast collapsing analogues. For instance,  the wall crossing is analogous to the adiabatic special Lagrangians in section \ref{fastcirclecollapsing}, and the orientifold locus is analogous to the fixed locus of an antiholomorphic involution in the Calabi-Yau 3-fold base.	
\end{rmk}

%\item
%Is there any further relation with Donaldson's adiabatic coassociative fibration proposal beyond formal analogy? Notice the two setups are topologically rather different, as the K3 surfaces in our setting are not even submanifolds in the $G_2$-manifold.

\item
How can one find new complete non-compact examples? We remark that a large part of our formal picture carries over if we replace K3 surfaces by ALE spaces.
  
\item
How about the special submanifolds and gauge theory on these potential examples?

\item
Can one incorporate more local mechanisms and thereby relax the tentative topological hypotheses (such as the disjointness of the walls and the boundary components, and the no excess $(-2)$-class condition)? As they stand, these topological conditions are rather restrictive.

\item
On the other hand, is there any a priori restriction on the topological complexity no matter how much one relaxes the topological hypotheses? In particular, what is the topological significance of the positivity of the Bakry-\'Emery Ricci curvature?
\end{itemize}

%While these problems are all technically quite challenging, the following special case is perhaps a more accessible starting point. Recall that the iterated collapsing picture is motivated by the Foscolo-Haskins-Nordstr\"om picture of circle collapsing over a Calabi-Yau 3-fold, which admits a holomorphic K3 fibration over a Riemann surface. A natural source of orientifold boundary comes from quotients by anti-holomorphic involutions, lifting anti-holomorphic involutions on the base. The testing ground is then to first find iterated fibrations with $\epsilon\ll t^2$ in the style of Foscolo-Haskins-Nordstr\"om; this does not involve the difficulties surrounding the weighted maximal submanifold equation. One then deforms the cohomology class to gradually uncollapse the circles, and at some critical stage $\epsilon\sim t^2$ our iterated fibration picture will likely appear.

%\begin{rmk}
%An fundamental question in $G_2$-geometry is whether there is any a priori bound on the deformation type.
%Now if our primary input starts with Calabi-Yau 3-folds, we are likely to get only finitely many deformation types for $G_2$-manifolds. A feature of our iterated fibration proposal is that it makes no a priori reference to Calabi-Yau 3-folds. Thus it is interesting to ask if we can ultimately go beyond the Calabi-Yau situation, and if there is any a priori restriction on the deformation types allowed in the iterated fibration picture.

%\end{rmk}

%What if the walls intersect?

%How to solve the global maximal submanifold equation?

\section{$Spin(7)$	analogues}

We now sketch a very analogous ansatz in the context of $Spin(7)$ geometry. This involves a small circle bundle over a 7-dimensional manifold with a closed $G_2$-structure, admitting a K3 fibration over a 3-fold base. This construction dimensionally reduce to our $G_2$ case, when the 8th dimension splits off. From a different perspective, it is a generalization of Donaldson's adiabatic coassociative K3 fibration proposal, which appears when the small circle bundle is almost flat.

\subsection{Small circle limit of $Spin(7)$ manifolds}

The Foscolo-Haskins-Nordstr\"om picture has an analogue on $Spin(7)$-manifolds. The $Spin(7)$ 4-form $\Phi$ with $S^1$-symmetry can be described in terms of a $G_2$-structure $\bar{\phi}$ on the 7-fold base, together with the $S^1$ connection $\vartheta$, and a positive function $h$ measuring the inverse squared length of the Killing vector field:
\begin{equation}
\Phi=\epsilon \vartheta\wedge \bar{\phi} + \bar{\psi}, \quad \bar{\psi}= h^{2/3} *_{\bar{\phi}} \bar{\phi}, \quad g= \epsilon^2 h^{-1}\vartheta^2+ h^{1/3} g_{\bar{\phi}},
\end{equation}
where $*_{\phi} \bar{\phi}$ and $g_{\bar{\phi}}$ are the 4-form and the metric associated to the $G_2$-structure $\bar{\phi}$ on the 7-fold. The torsion free condition is $d\Phi=0$, or equivalently 
\[
d\bar{\phi}=0, \quad d\bar{\psi}+ \epsilon d\vartheta\wedge \bar{\phi}=0.
\]
Without any fine tuning, the formal limit as $\epsilon\to 0$ is that $\bar{\phi}$ is a torsion free $G_2$-structure, and $h\approx \text{const}$. To see the first order correction, we write $h=1+\epsilon \mathfrak{h}$. Since $d\bar{\phi}=0$, we know $d *_{\bar{\phi}} \bar{\phi}$ lies in the 14-dimensional component of $\Omega^5$, whence we deduce from $d\bar{\psi}+ \epsilon d\vartheta\wedge \bar{\phi}=0$ that to leading order
\[
\pi_7( d\vartheta \wedge \bar{\phi}+ \frac{2}{3} d \mathfrak{h} \wedge *_{\bar{\phi}} \bar{\phi} )=0.
\]
This is recognized as the $G_2$-monopole equation. The analogue of Dirac pole singularity along special Lagrangians inside the Calabi-Yau 3-fold base, is coassociative manifolds inside the $G_2$-manifold base.

\subsection{Iterated fibration: fast circle collapsing}\label{Spin7dimrectiontoG2}

Now we consider the 7-fold with the closed $G_2$-structure $\bar{\phi}$ as admitting a collapsing coassociative K3 fibration in its own right, over a 3-dimensional base $B$ with coordinates $y_0, y_1,y_2$. Writing out in the type decomposition of forms,
\begin{equation}\label{Spin7structurecomponents}
\begin{cases}
\bar{\phi}= t^2(  \bar{\omega}_0 dy_0+ \bar{\omega}_1 dy_1+\bar{\omega}_2 dy_2)+ \bar{\lambda},
\\
\bar{\psi}= h^{2/3} (t^4 \bar{\mu}-t^2 (\bar{\Theta}_0dy_1dy_2+ \bar{\Theta}_1dy_2dy_0+ \bar{\Theta}_2dy_0dy_1  ) ) .   
\end{cases}
\end{equation}
Here modulo lower order terms, $\bar{\omega}_i$ and $\bar{\Theta}_i$ are horizontal-vertical type $(0,2)$ forms, $\bar{\lambda}=-\lambda dy_0dy_1dy_2$ for $\lambda>0$, and $\bar{\mu}$ has horizontal-vertical type $(0,4)$. The parameter $0<t\ll 1$ controls the K3 collapsing rate. The coassociative condition $\bar{\phi}|_{K3}=0$ is encoded in the ansatz. The $G_2$-structure imposes a number of linear algebraic constraints:
\begin{equation}\label{hypersymplecticforms}
\begin{cases}
\bar{\mu}= \lambda^{-2/3}\det^{1/3} ( \frac{1}{2} \bar{\omega}_a\wedge \bar{\omega}_b ),
\\
\bar{\omega}_i \wedge \bar{\Theta}_j= 2\lambda^{1/3} \delta_{ij} \det^{1/3}( \frac{1}{2} \bar{\omega}_a\wedge \bar{\omega}_b )= 2\lambda \delta_{ij} \bar{\mu}.
\end{cases}
\end{equation}

This setting dimensionally reduces to the $G_2$-version of iterative collapsing picture in section \ref{fastcirclecollapsing} when the $\R$-variable $y_0$ splits off. Explicitly, the relation to the Apostolov-Salamon $SU(3)$-structure is given by
\[
\begin{cases}
\bar{\omega}_0= \omega_y,  \quad \bar{\Theta}_0= h^{1/3} \omega_y,
\\
\bar{\omega}_1 dy_1+\bar{\omega}_2 dy_2=-h^{1/4} \text{Im}\Omega,
\\
\bar{\lambda}=- \omega_S\wedge dy_0,
\\
\bar{\Theta}_2 dy_1-\bar{\Theta}_1 dy_2=- h^{1/4} \text{Re}\Omega, 
\\
\bar{\mu}=\frac{1}{2} h^{1/3} \omega_y^2,
\end{cases}
\]
so that 
\[
\begin{cases}
\Phi= dy_0\wedge (\epsilon \vartheta\wedge \omega+h^{3/4} \text{Re}\Omega)+ (- \epsilon h^{1/4}\vartheta\wedge \text{Im}\Omega+ \frac{1}{2}h\omega^2),
\\
\bar{\phi}= -\omega dy_0- h^{1/4} \text{Im}\Omega,
\\
\bar{\psi}=- h^{3/4} \text{Re}\Omega\wedge dy_0+ \frac{1}{2} h\omega^2.
\end{cases}
\]

Returning to the $Spin(7)$ story,
the fast circle collapsing case is $\epsilon \ll t^2\ll 1$ as before. To zeroth order, the circle bundle is invisible, and we only see the geometry of the 7-manifold with an approximately torsion free $G_2$-structure admitting a collapsing coassociative K3 fibration, described as in Donaldson's proposal \cite{Donaldson} by a maximal submanifold in $H^2(K3)$.

\subsection{Iterative fibration: fine tuned collapsing}

Now we consider the \textbf{fine tuned collapsing} $\epsilon=t^2$, and try to encode it into adiabatic data.

\begin{itemize}
\item 
Since $d\bar{\phi}=0$, the $\bar{\omega}_i$ restricted to K3 fibres are closed, so defines a `hypersymplectic triple'. The function $\lambda$ is to leading order constant on fibres. Since we also have $d\bar{\psi}+ \epsilon d\vartheta\wedge \bar{\phi}=0$, by imposing to leading order $h$ is constant on K3 fibres, we get that $\bar{\Theta}_i$ are approximately closed on fibres. Now $\bar{\Theta}_i$ and $\bar{\omega}_i$ are two bases for the anti-self-dual 2-forms on the K3 fibres, so they must be fibrewise related to each other by linear transformations. This shows to leading order the K3 fibres are \textbf{hyperk\"ahler}, even though $\bar{\omega}_i$ are not orthonormal in general. We write 
\[
\bar{g}_{ij}=\int_{K3} \bar{\omega}_i\wedge \bar{\omega}_j.
\]
By (\ref{hypersymplecticforms}), the linear transformations can be determined cohomologically as
\begin{equation}\label{Spin7Thetaiomegai}
\bar{\Theta}_i= 2\lambda \bar{g}^{ik} \bar{\omega}_k \int_{K3}\bar{\mu}.
\end{equation}

\item
From $d\bar{\phi}=0$ mod $dy_0\wedge dy_1\wedge dy_2$ terms, we have the integrability condition 
\[
\frac{\partial}{\partial y_i}[\bar{\omega}_j]= \frac{\partial}{\partial y_j}[\bar{\omega}_i],
\]
so over a local base there is a map $H: B\to H^2(K3)$, such that $[\bar{\omega}_i]=- \frac{\partial H}{\partial y_i}$. Here the minus sign is for the compatibility with our dim reduction to the $G_2$-case in section \ref{Spin7dimrectiontoG2}. The map $H$ is a \textbf{positive section} in the sense that the induced metric on $B$ is Riemannian.

\item
From $d\bar{\psi}+ \epsilon d\vartheta\wedge \bar{\phi}=0$ mod $dy_i\wedge dy_j$ terms, we have 
\[
\frac{\partial}{\partial y_i} [h^{2/3}\bar{\mu}]= - [d\vartheta]\cdot [\bar{\omega}_i]=  \frac{\partial}{\partial y_i}( [d\vartheta]\cdot H  )\in H^4(K3).
\]
Thus \[
h^{2/3}[\bar{\mu}]= [d\vartheta]\cdot H+ \text{const}\]
Up to adjusting $H$ by an additive constant, we can arrange 
\begin{equation}\label{Spin7volumenormalisation}
h^{2/3}[\bar{\mu}]=\begin{cases}
[d\vartheta]\cdot H,\quad & [d\vartheta]\neq 0,
\\
\text{const}, \quad & [d\vartheta]=0.
\end{cases} 
\end{equation}

\item

Since $d\bar{\phi}=0$, the 7-dimensional component of $d*_{\bar{\phi}}\bar{\phi}$ vanishes, \ie
\[
d(\psi h^{-2/3})\wedge \iota_X \bar{\phi}=0, \quad \forall X .
\]
Using $d\psi= -\epsilon d\vartheta\wedge \bar{\phi}$, 
\[
(\epsilon h d\vartheta \wedge \bar{\phi}+ \frac{2}{3} \bar{\psi}\wedge dh) \wedge \iota_X \bar{\phi}=0.
\]
Substituting $X$ the horizontal lift of $\frac{\partial}{\partial y_i}$, and the component formulae (\ref{Spin7structurecomponents}) for the $Spin(7)$ structure, and noticing $\bar{\omega}_i\wedge \bar{\Theta}_i= 2\lambda \bar{\mu}$, we derive
\begin{equation}\label{Spin7harmonicform}
h^{1/3} d\vartheta \wedge \omega_i = \bar{\mu} \frac{\partial h}{\partial y_i}.
\end{equation}
Now the formula $\bar{\mu}= \det^{1/3}(\bar{\omega}_a\wedge \bar{\omega}_b) \lambda^{-2/3}$ indicates that $\bar{\mu}$ is up to a fibrewise constant the hyperk\"ahler volume form on the K3, so the above equation implies $d\vartheta$ restricted to the K3 fibres are the \textbf{harmonic 2-forms} in $[d\vartheta]$ up to the leading order. Morever, taking the cohomology classes of (\ref{Spin7harmonicform}), and comparing with (\ref{Spin7volumenormalisation}), we get
\[
\frac{\partial}{\partial y_i}(h^{-1/3}[\bar{\mu}])=0,
\]
or equivalently
\begin{equation*}
h^{-1/3}[\bar{\mu}]=\text{const}.
\end{equation*}
The value of this constant can be prescribed, up to a global scaling of the $Spin(7)$ 4-form. The normalisation consistent with our $G_2$ story is 
\begin{equation}
\int_{K3} \bar{\mu}= \frac{1}{2} h^{1/3}.
\end{equation}
(In that dim reduction, this normalisation corresponds to $\int_{K3}[\omega_y]^2=1$.) Substituting back into (\ref{Spin7volumenormalisation}), we recover
\begin{equation}\label{hviaHSpin7}
h=\begin{cases}
2[d\vartheta]\cdot H,\quad & [d\vartheta]\neq 0,
\\
\text{const}, \quad & [d\vartheta]=0.
\end{cases} 
\end{equation}
Notice the positivity of $h$ is an extra a priori requirement on $H$.

\item
From the hyperk\"ahler K3 condition,
\[
\bar{\omega}_i\wedge \bar{\omega}_j= Q_{ij} \bar{\mu},
\]
for some fibrewise constant matrix $Q_{ij}$ to leading order. The coefficient $Q_{ij}$ is determined by integrating over the K3, so that
\[
Q_{ij}= \bar{g}_{ij} (\int_{K3} \bar{\mu})^{-1}.
\]
Thus 
\[
\text{det} ^{1/3}( \frac{\bar{\omega}_a\wedge \bar{\omega}_b}{2} )= \frac{1}{2} \text{det}^{1/3}(Q) \bar{\mu}= \frac{ \det^{1/3}(\bar{g}) }{ 2\int \bar{\mu} } \bar{\mu}.
\]
Comparing with $\bar{\mu}= \text{det} ^{1/3}( \frac{\bar{\omega}_a\wedge \bar{\omega}_b}{2} ) \lambda^{-2/3}$, we have
\[
\lambda^{2/3} = \frac{ \det^{1/3}(\bar{g}) }{ 2\int \bar{\mu} }= \frac{ \det^{1/3}(\bar{g}) }{ h^{1/3} }
\]
hence
\begin{equation}
\lambda= \text{det}^{1/2} (\bar{g}) h^{-1/2}.
\end{equation}
Now our previous formula (\ref{Spin7Thetaiomegai}) simplifies to
\begin{equation}
\Theta_i= \lambda h^{1/3} \bar{g}^{ik} \bar{\omega}_k=\text{det}^{1/2} (\bar{g}) h^{-1/6} \bar{g}^{ik} \bar{\omega}_k.
\end{equation}
By this stage we have successfully expressed all quantities to leading order in terms of $H$.

\item
By looking at the horizontal-vertical 
(3,2) component
of $d\bar{\psi}+ \epsilon d\vartheta\wedge \bar{\phi}=0$,
we obtain to leading order
\[
\sum_i \partial_i (h^{2/3} [\bar{\Theta}_i]) + \lambda [d\vartheta]=0.
\]
Substituting in the formulae for $\bar{\Theta}_i$ and $\lambda$, we obtain
\begin{equation}
\sum_i \partial_i (  h^{1/2}\text{det}^{1/2} (\bar{g}) \bar{g}^{ik} \partial_k H) = \text{det}^{1/2} (\bar{g}) h^{-1/2} [d\vartheta] ,
\end{equation}
where $h$ is related to $H$ by (\ref{hviaHSpin7}). This is the \textbf{weighted maximal submanifold equation} we saw previously, except now over a 3-dimensional manifold. For instance, the variational formulation in section \ref{Variationalweightedmaximal} holds verbatim in this 3-dimensional case.

\end{itemize}

Morever, given a solution of the weighted maximal submanifold equation over a local base $B$, a procedure closely analogous to section \ref{Slowcollapsing} reconstructs an approximately torsion free $\Phi$, so that the above formulae hold to leading order. In the special case $[d\vartheta]=0$, this agrees with Donaldson's proposal \cite{Donaldson}.

\subsection{Global discussions}

The local compactification mechanisms in the $G_2$-case have natural analogues in the $Spin(7)$ case:

\begin{itemize}
\item
The analogue of the Lefschetz fibration is the Kovalev-Lefschetz singularity disucssed in detail in Donaldson's proposal \cite{Donaldson}.
This is basically a parametrized version of the Lefschetz singularity along a curve in the 7-fold, projecting down to a knot inside the 3-dimensional base $B$. The Picard-Lefschetz monodromy can be encoded into affine orbifold bundles.

\item
The wall crossing phenomenon has the same transverse behaviour, the difference being that the walls are now surfaces. The walls are still characterized by $H\cdot \sigma=\text{const}$ where $\sigma\in H^2(K3)$ is the jumping of the first Chern class. This condition is intimately related to adiabatic coassociative submanifolds in collapsing $G_2$-manifolds, just as our previous wall crossing story relates to adiabatic special Lagrangians. In the simplest case, the walls are disjoint, but the knots in $B$ coming from the projection of the Lefschetz singular locus should be allowed to intersect the walls in $B$. Inside the 7-fold, the curve of Lefschetz singularities generically stays disjoint from the 4-dimensional discriminant locus associated with wall crossing, even if their projections in $B$ intersect.

\item
Similarly, the boundary orientifold behaviour generalizes to the $Spin(7)$ case.

\end{itemize}

Thus the global formulation of the weighted maximal submanifold equation makes sense for the $Spin(7)$ case, just like in the $G_2$ case described in section \ref{Globaltopology}. In the special case of $[d\vartheta]=0$ without any walls or boundaries, it recovers Donaldson's proposal. The topological situation is now much richer than the 2-dimensional case. It involves a local system with singularities along knots, over a 3-manifold possibly with boundary; the interplay between the $Spin(7)$-geometry and the 3-manifold topology seems well worth exploring.


\begin{thebibliography}{7}
	


\bibitem{ApostolovSalamon} 



Apostolov, Vestislav; Salamon, Simon. Kähler reduction of metrics with holonomy $G_2$. Comm. Math. Phys. 246 (2004), no. 1, 43--61.





\bibitem{BakryEmery}
 Ambrosio, Luigi; Gigli, Nicola; Savaré, Giuseppe. Bakry-Émery curvature-dimension condition and Riemannian Ricci curvature bounds. Ann. Probab. 43 (2015), no. 1, 339--404.
 
 
 \bibitem{AtiyahWitten} 
 
 Atiyah, Michael; Witten, Edward. $M$-theory dynamics on a manifold of $G_2$ holonomy. Adv. Theor. Math. Phys. 6 (2002), no. 1, 1--106.
 
 
 
\bibitem{Barbosa} 
Barbosa,
Rodrigo. A Deformation Family for Closed G2-Structures on ADE Fibrations. 	arXiv:1910.10742.

\bibitem{Hubner1} 

Andreas P. Braun, Sebastjan Cizel, Max H\"ubner, Sakura Schafer-Nameki.

Higgs Bundles for M-theory on $G_2$-Manifolds. 	arXiv:1812.06072.




\bibitem{Chen} 
Gao Chen, G2 manifolds with nodal singularities along circles, accepted by the Journal of Geometric Analysis.   
   
   
\bibitem{HaskinsCorti} 

Corti, Alessio; Haskins, Mark; Nordström, Johannes; Pacini, Tommaso. $G_2$-manifolds and associative submanifolds via semi-Fano 3-folds. Duke Math. J. 164 (2015), no. 10, 1971--2092. 




\bibitem{Donaldson} 

Donaldson, Simon. Adiabatic limits of co-associative Kovalev-Lefschetz fibrations. Algebra, geometry, and physics in the 21st century, 1--29, Progr. Math., 324, Birkhäuser/Springer, Cham, 2017.



\bibitem{Foscolo}  

Foscolo, Lorenzo. ALF gravitational instantons and collapsing Ricci-flat metrics on the $K3$ surface. J. Differential Geom. 112 (2019), no. 1, 79--120.


\bibitem{HaskinsS1} 
Lorenzo Foscolo, Mark Haskins, Johannes Nordström.Complete non-compact G2-manifolds from asymptotically conical Calabi-Yau 3-folds. arXiv:1709.04904


\bibitem{HeinSun} 
Hein, Hans-Joachim; Sun, Song. Calabi-Yau manifolds with isolated conical singularities. Publ. Math. Inst. Hautes Études Sci. 126 (2017), 73--130.



\bibitem{HSVZ} 

Hans-Joachim Hein, Song Sun, Jeff Viaclovsky, Ruobing Zhang.
Nilpotent structures and collapsing Ricci-flat metrics on K3 surfaces. 	arXiv:1807.09367




\bibitem{Hubner} 

H\"ubner, Max.
Local G2-Manifolds, Higgs Bundles and a Colored Quantum Mechanics. arXiv:2009.07136.


\bibitem{JoyceKarigiannis} 


Dominic Joyce, Spiro Karigiannis. A new construction of compact torsion-free $G_2$-manifolds by gluing families of Eguchi-Hanson spaces. To appear in Journal of Differential Geometry.


\bibitem{Kovalev} 

Kovalev, Alexei. Twisted connected sums and special Riemannian holonomy. J. Reine Angew. Math. 565 (2003), 125--160. 



\bibitem{KovalevLee} 
Kovalev, Alexei; Lee, Nam-Hoon. $K3$ surfaces with non-symplectic involution and compact irreducible $G_2$-manifolds. Math. Proc. Cambridge Philos. Soc. 151 (2011), no. 2, 193--218.



\bibitem{Limaximal} 

Li, Yang.
Dirichlet problem for maximal graphs of higher codimension. 	arXiv:1807.11795. International Mathematics Research Notices.


\bibitem{LiCY} 
Li, Yang. A new complete Calabi-Yau metric on $\C^3$. Invent. Math. 217 (2019), no. 1, 1--34.



\bibitem{LiCY2} 
Li, Yang. On collapsing Calabi-Yau fibrations. Accepted by Journal of Differential Geometry.


\bibitem{LiCY3} 
Li, Yang. A gluing construction of collapsing Calabi-Yau metrics on K3 fibred 3-folds. Geom. Funct. Anal. 29 (2019), no. 4, 1002--1047.


\bibitem{Pantev} 
Pantev, Tony; Wijnholt, Martijn. Hitchin's equations and M-theory phenomenology. J. Geom. Phys. 61 (2011), no. 7, 1223--1247.



\bibitem{SunZhang} 

Sun, Song; Zhang, Ruobing. Complex structure degenerations and collapsing of Calabi-Yau metrics. 	arXiv:1906.03368



\end{thebibliography}
\end{document}